\documentclass{amsart}


\reversemarginpar

\oddsidemargin=0.05in 
\evensidemargin=0.05in 
\topmargin=-0.3in
\textwidth=6.2in
\textheight=9.2in

\usepackage{amsmath, amsthm, amssymb}

\newcommand{\lb}{\langle}
\newcommand{\rb}{\rangle}

\newcommand{\rrestriction}{\!\!\restriction}

\newcommand{\im}[1]{\mbox{Im}(#1)}

\newcommand{\desc}{\mbox{${\rm desc}$}}
\newcommand{\anc}{\mbox{${\rm anc}$}}
\newcommand{\aut}{\mbox{${\rm Aut}$}}

\usepackage{graphicx}

\usepackage{enumerate}
\usepackage{verbatim}



\newcommand{\Aut}{\operatorname{Aut}}



\newcounter{case}
\newenvironment{case}
{\stepcounter{case} \noindent \textbf{Case~\arabic{case}:}\itshape}{}

\newcounter{subcase}[case]

\newcounter{subsubcase}[subcase]

 \theoremstyle{plain}
 \newtheorem{thm}{Theorem}[section]
 \numberwithin{equation}{section} 
 \numberwithin{figure}{section} 
 \theoremstyle{plain}
 \newtheorem{lem}[thm]{Lemma} 
 \newtheorem{prop}[thm]{Proposition}

 \newtheorem{corol}[thm]{Corollary} 
 \theoremstyle{definition}
 \newtheorem{defn}[thm]{Definition}
 \theoremstyle{remark}



\usepackage{epic,eepic,ecltree} 

\input xy
\xyoption{all}

\usepackage{xy} 
\newcommand{\dedge}[1]{\ar@{--}[#1]}
\newcommand{\edge}[1]{\ar@{-}[#1]}
\newcommand{\arcc}[1]{\ar@{->}[#1]}
\newcommand{\arccbend}[1]{\ar@/^2pc/[#1]}
\newcommand{\arccbendunder}[1]{\ar@/_2pc/[#1]}
\newcommand{\arcb}[1]{\ar@{<-}[#1]}
\newcommand{\lulab}[1]{\ar@{}[l]_<<{#1}}
\newcommand{\rulab}[1]{\ar@{}[r]^<<{#1}}
\newcommand{\ldlab}[1]{\ar@{}[l]^<<{#1}}
\newcommand{\rdlab}[1]{\ar@{}[r]_<<{#1}}
\newcommand{\node}{*+[o][F-]{ }}






\begin{document}

\title[]{Locally-finite connected-homogeneous digraphs   \\ \today}


\maketitle

\vspace{-4mm}

\begin{center}

    ROBERT GRAY\footnote{This work was supported by an EPSRC Postdoctoral Fellowship EP/E043194/1 held by the first author at the School of Mathematics \& Statistics of the University of St Andrews, Scotland. \\
    \indent This author was partially supported by FCT and FEDER, project POCTI-ISFL-1-143 of Centro de \'{A}lgebra da Universidade de Lisboa, and by the project PTDC/MAT/69514/2006.}

    \medskip

    Centro de \'{A}lgebra da Universidade de Lisboa, \\ Av. Prof. Gama Pinto, 2,  1649-003 Lisboa, Portugal.

    \medskip

    \texttt{rdgray@fc.ul.pt}

    \bigskip
    
    \medskip

    R{\"O}GNVALDUR G. M{\"O}LLER

    \medskip

    Science Institute,\ University of Iceland, \\ 
    Dunhaga 3, 107 Reykjavik, Iceland.

    \medskip

    \texttt{roggi@raunvis.hi.is} \\
\end{center}
\begin{abstract} 
A digraph is connected-homogeneous if any isomorphism between finite connected induced subdigraphs extends to an automorphism of the digraph. We consider locally-finite connected-homogeneous digraphs with more than one end. In the case that the digraph embeds a triangle we give a complete classification, obtaining a family of tree-like graphs constructed by gluing together directed triangles. In the triangle-free case we show that these digraphs are highly arc-transitive. We give a classification in the two-ended case, showing that all  
examples arise from a simple construction given by gluing along a directed line copies of some fixed finite directed complete bipartite graph. 
When the digraph has infinitely many ends we show that the descendants of a vertex form a tree, and the reachability graph (which is one of the basic building blocks of the digraph)
is one of: an even cycle, a complete bipartite graph, the complement of a perfect matching, or an infinite semiregular tree. We give examples showing that each of these possibilities is realised as the reachability graph of some connected-homogeneous digraph, and in the process we obtain a new family of highly arc-transitive digraphs without property $Z$. 
\end{abstract}

\maketitle



\section{Introduction}
\label{sec_intro}

By a digraph we mean a set with an irreflexive antisymmetric binary relation defined on it. So a digraph $D$ consists of a set  of vertices $VD$ together with a set of pairs of vertices $ED$, called \emph{arcs}, such that there are no loops (arcs between a vertex and itself) and between any pair of vertices we do not allow arcs in both directions. A digraph is called \emph{homogeneous} if any isomorphism between finite induced subdigraphs extends to an automorphism of the digraph. The finite homogeneous digraphs were classified by Lachlan in \cite{Lachlan3} and, in a major piece of work, Cherlin in \cite{Cherlin1} classified the countably infinite homogeneous digraphs.

Various other symmetry conditions have been considered for digraphs. In \cite{Praeger1} and \cite{Cameron2} the class of \emph{highly arc-transitive} digraphs was investigated. For any natural number $k$ a $k$-arc in a digraph $D$ is a sequence $(x_0, \ldots, x_k)$ of $k+1$ vertices of $D$ such that for each $i$ ($0 \leq i < k$) the pair $(x_i,x_{i+1}) \in ED$. A digraph $D$ is said to be $k$-arc-transitive if given any two $k$-arcs $(x_0,\ldots,x_k)$ and $(y_0,\ldots,y_k)$ there is an automorphism $\alpha$ such that $\alpha(x_i) = y_i$ for $0 \leq i \leq k$, and $D$ is said to be \emph{highly arc-transitive} if it is $k$-arc-transitive for all $k \in \mathbb{N}$. 
In particular it follows that in a $k$-arc-transitive digraph the subdigraphs induced by any pair of $k$-arcs are isomorphic to each other. Specifically, if $D$ is a connected infinite locally-finite digraph then $D$ will have a $k$-arc $(x_0, \ldots, x_k)$ whose induced subdigraph only contains the arcs $(x_i, x_{i+1})$, and thus if in addition $D$ is assumed to be $k$-arc-transitive then all $k$-arcs in $D$ will have this form.  
For undirected graphs high-arc-transitivity is not an interesting notion since the only highly arc-transitive undirected graphs are cycles or trees, but for digraphs the family is very rich, and is still far from being understood. Following \cite{Cameron2} several other papers have been written on this subject; see \cite{Evans1}, \cite{Malnic2}, \cite{Malnic1}, and \cite{Moller6} for example. In contrast to the results on homogeneity described in the previous paragraph, the property of being highly arc-transitive is not restrictive enough for an explicit description of all possible countably infinite examples to be obtained.

In this paper we consider a variant of homogeneity where we only require that isomorphisms between \emph{connected} substructures extend to automorphisms. 
A digraph is \emph{connected-homogeneous} if any isomorphism between finite induced \emph{connected} subdigraphs extends to an automorphism of the digraph. Of course, any homogeneous digraph is connected-homogeneous, but the converse is far from being true. For example any digraph tree with fixed in- and out-degree is connected-homogeneous, but is not homogeneous, since its automorphism group is not transitive on non-adjacent pairs. This notion was first considered for (undirected) graphs in \cite{Gardiner2} and \cite{Enomoto1}, where the finite and locally-finite connected-homogeneous graphs were classified. More recently 
this has been extended to arbitrary countable graphs in \cite{Gray1}. Part of the motivation for  \cite{Gray1} came from the fact that the undirected connected-homogeneous graphs are a subclass of the distance-transitive graphs; in the sense of \cite{BrouwerBook}. For digraphs we shall see that 
there is an analogous relationship between connected-homogeneity and high-arc-transitivity. Specifically, for a large family of digraphs, connected-homogeneity actually implies high-arc-transitivity, so this notion gives rise to a natural subclass of the highly arc-transitive digraphs.

Our ultimate aim is to classify the connected-homogeneous countable digraphs.  In general this could be a difficult problem, especially in light of the fact that it would generalize Cherlin's result \cite{Cherlin1} for homogeneous digraphs. For undirected graphs by far the easiest part of the classification is in the infinite locally-finite case (where \emph{locally-finite} means that all vertices have finite degree). Indeed, the infinite locally-finite distance-transitive digraphs were classified in \cite{Macpherson1} and since connected-homogeneity implies distance-transitivity this deals with the locally-finite case, for undirected graphs. Motivated by this, in this paper we concentrate on the class of locally-finite connected-homogeneous digraphs.

When working with locally-finite graphs and digraphs there is a natural division into consideration of one-ended digraphs and of those with more than one end. Roughly speaking, the number of ends of a graph is the number of `ways of going to infinity', so a two-way infinite line has two ends, while an infinite binary tree has infinitely many (see Section~2 for a formal definition of the ends of a graph), and the number of ends of a digraph is the number of ends of its underlying undirected graph. A substantial theory exists for dealing with graphs with more than one end and therefore they are more 
tractable than one-ended graphs in most instances. 
For undirected graphs, connected-homogeneity actually implies that the graph must have more than one end in the locally-finite case (see \cite{Macpherson1}) and this is one of the reasons that this class is reasonably easy to handle. As we shall see below, a locally-finite  connected-homogeneous digraph need not have more than one end, so the one-ended case must be handled separately. Here we work exclusively on the case where the digraph has more than one end since, by analogy with undirected graphs, this case should be the most manageable. Even with this additional ends assumption, the family of digraphs obtained is very rich. In addition to the above motivation, this 
work may also be thought of as contributing to a general programme, initiated in \cite{Seifter1}, aimed at understanding the class of transitive digraphs with more than one end. It also provides yet another illustration of the usefulness of Dunwoody's theorem, and the associated theory of structure trees (in the sense of \cite{Dicks1}), for investigations of this kind. 

We now give a brief summary of our main results. After introducing the basic concepts in Section~2, we start our investigations by looking at many ended $2$-arc-transitive digraphs in Section~3. Under the assumption that the stabiliser of a vertex acts primitively on its in- and out-neighbours, if the digraph has more than one end we show its reachability graph must be bipartite, and if the digraph has strictly more than two ends then we prove that the descendants (and ancestors) of each vertex form a tree. These two results are proved using Dunwoody's theorem (stated in Section~2) and the associated theory of structure trees. In Section~4 we begin our study of connected-homogeneous digraphs, obtaining a structural result for the triangle-free case in Theorem~\ref{bigtheorem}. The majority of this section is devoted to proving part (iii) of this theorem, which is a classification of locally-finite connected-homogeneous bipartite graphs. Part (iv) of Theorem~\ref{bigtheorem} gives several families of infinitely ended connected-homogeneous digraphs. In Section~5 further constructions are described, showing that the examples of part Theorem~\ref{bigtheorem}(iv) on their own do not constitute a classification, in the triangle-free case. Applying results from Section~3, the $2$-ended case is dealt with in Section~6 where we show that the only examples are those given by taking the compositional product of a finite independent set with a two-way infinite directed line. Finally, in Section~7, we give a classification in the case that the digraph embeds a triangle, obtaining a family of digraphs built from directed triangles in a straightforward tree-like manner. Again, both in Sections~6 and 7, Dunwoody's theorem and the theory of structure trees are used extensively. 

\emph{Note added in proof.} Very recently in \cite{Hamann} it has been shown in by Hamann and Hundertmark that there are no other connected locally-finite connected-homogeneous digraphs with more than one end except those mentioned in this article. The main result of \cite{Hamann} is a classification of connected-homogeneous digraphs with more than one end, without any local finiteness assumption. Its proof makes use of a new theory of structure trees based on vertex cut systems, introduced recently by Dunwoody
and Kr\"{o}n \cite{DunwoodyKron}. 

\section{Preliminaries: ends, structure trees, reachability relations, and descendants}

Let $D = (VD, ED)$ be a digraph. 
Given vertices $x,y \in VD$ we write
$x \rightarrow y$ to mean $(x,y) \in ED$.  If every pair of distinct
vertices of $D$ are joined by an arc then $D$ is called a
\emph{tournament}. Given a digraph $D$ we say a vertex $w$ is an {\em out-neighbour} of a
vertex $v$ if $(v,w)\in ED$ and we define $D^+(v)=\{w\in D : (v,w)\in
ED\}$ and the out-degree of $v$ as
$d^+(v)=|D^+(v)|$.  Similarly we say a vertex $w$ is an in-neighbour of a
vertex $v$ if $(w,v)$ is in $ED$ and set $D^-(v)=\{w\in D : (w,v)\in ED\}$
and define the in-degree of a vertex $v$ as
$d^-(v)=|D^-(v)|$. We say that $D$ is
\emph{locally-finite} if every vertex has finite in- and
out-degree. The
\emph{neighbourhood} of $v$ is defined as the set 
$D(v) = D^+(v)\cup D^-(v)$  and the \emph{degree} of a vertex $v$ is
defined as $d(v) = |D(v)|$.  We use $\Gamma(D)$ to denote the
undirected underlying graph of $D$: so $\Gamma(D)$ is the undirected
graph obtained by replacing every arc of $D$ by an edge. A
\emph{walk} in $D$ is a finite sequence $v_0, v_1, \ldots, v_n$ 
of vertices where
$v_{i-1}$ and $v_{i}$ are joined by an edge 
in the graph $\Gamma(D)$ for $i=1, 2,
\ldots, n$. A
\emph{path} is a walk without repeated vertices. By a
\emph{directed path} (or a \emph{directed walk}) we mean a path (or a walk) 
$v_0, v_1, \ldots, v_n$ such that 
 $v_{i-1} \rightarrow v_i$ for  $i=1, 2,
\ldots, n$.  Let $u \in VD$. For any
non-negative integer $r$ let $D^r(u)$ be the set of vertices that may
be reached from $u$ by a directed path of length $r$. For a negative
integer $r$ let $D^r(u)$ be the set of vertices $w$ such that $u \in
D^{-r}(w)$. 
If $X$ is a subset of the vertex set of $D$ we use $\langle X \rangle$
to denote the subdigraph of $D$ induced by $X$ (the subdigraph induced by $X$ consists of
the vertices of $X$ together with all arcs that have both end vertices in $X$). 
Similarly, 
for a set $Y$ of arcs
in $D$ we let $\langle Y \rangle$ denote the subgraph consisting of
all the vertices which occur as end vertices of arcs in $Y$ together
with the arcs in $Y$.

In an undirected graph $\Gamma$ we use $\sim$ to denote adjacency
between vertices in the graph.
Given a connected undirected graph $\Gamma$ and two vertices $u,v \in
V\Gamma$ we use $d_{\Gamma}(u,v)$ to denote the length of a shortest
path from $u$ to $v$, calling this the distance between $u$ and $v$ in
$\Gamma$. A graph $\Gamma$ is called \emph{bipartite} if $V\Gamma$ can be
partitioned into two disjoint non-empty sets $X$ and 
$Y$ such that each edge in
$\Gamma$ has one end vertex in $X$ and the other one in $Y$. 
The partition $X \cup Y$ is
then called a \emph{bipartition} of $\Gamma$. By a \emph{bipartite digraph}
we mean a digraph $D$ whose vertex set can be written as a disjoint
union $VD = X \cup Y$, where every arc of $D$ is directed from $X$ to
$Y$.  

 We use $K_n$ to denote the complete
graph with $k$ vertices, $K_{m,n}$ denotes the complete bipartite
graph with parts of sizes $m$ and $n$, $C_n$ denotes the cycle with
$n$ vertices (i.e. a graph with vertex set $\{ 0, \ldots, n-1\}$ and
$i \sim j$ if and only if $|i-j| \equiv 1 \pmod{n}$). By the
\emph{complement of a perfect matching} on $2n$ vertices we mean the
bipartite graph with bipartition $X \cup Y$ where $|X| = |Y| = n$ with
a bijection $\eta :X \rightarrow Y$  and $(x,y) \in X \times Y$ an
edge if and only if $y \neq \eta(x)$. For each $n \in \mathbb{N}$ we use $CP_n$ to denote the complement of perfect matching with $2n$ vertices. 
We use $D_3$ to denote the
directed $3$-cycle: so this is the digraph with vertex set $\{ a,b,c
\}$ and $a \rightarrow b \rightarrow c \rightarrow a$. 
More generally, for $n \geq 3$ we use $D_n$ to denote the directed $n$-cycle. 

A \emph{tree}
is a connected graph without cycles. 
Every tree is bipartite with a
unique bipartition.  
We call a tree \emph{regular}
if all of its vertices have the same degree
and \emph{semiregular} if  
any two vertices in the same part of the bipartition have the
same degree as one another.   We call a digraph $D$ a tree if
the corresponding undirected graph is a tree.  

We use $\Aut{D}$ to denote the automorphism group of the digraph
$D$. The digraph $D$ is said to be \emph{vertex-transitive} if its
automorphism group acts transitively on the set of vertices $VD$. 

\subsection*{\boldmath Ends, $D$-cuts and structure trees}

The theory of structure trees is a powerful tool to investigate graphs
with more than one end. 
In this article extensive use is made of this theory.  In
this subsection we provide a brief overview of the ideas and results
that will be needed. The ideas presented in this subsection are drawn
from 
\cite{Dicks1}, \cite{Moller5} and \cite{Thomassen1}, to which we refer
the reader for more details.    

First we outline the ideas for undirected graphs and then indicate how
they will be applied in this paper for digraphs.  For the rest of this
section let $\Gamma$ be an
infinite connected locally-finite graph. By a \emph{ray} in $\Gamma$
we mean an infinite sequence $\{ v_i \}_{i \in \mathbb{N}}$ of
distinct vertices such that $v_i \sim v_{i+1}$ for all $i$. The
\emph{ends} of the graph $\Gamma$ are equivalence classes of rays
where two rays $\rho$ and $\sigma$ are said to be equivalent if there
is a third ray $\tau$ such that $\tau$ intersects each of $\rho$ and
$\sigma$ infinitely often. It is a straightforward exercise to check
that this is an equivalence relation on the set of all rays. Of
course, any connected infinite locally-finite graph has at least one
end. In fact, it is known that a vertex-transitive graph has $0$, $1$, $2$
or $2^{\aleph_0}$ ends (this applies to non-locally finite graphs
as well, see \cite{Moller3}). 

For a subset $e$ of $V\Gamma$ we define the \emph{co-boundary} $\delta
e$ of $e$ to be the set of edges $a \in E \Gamma$ such that one vertex
of $a$ belongs to $e$ while the other belongs to $e^*$ (where $e^*$
denotes the complement of $e$ in $V \Gamma$), and we call the subset
$e$ a \emph{cut} of $\Gamma$ if $\delta e$ is finite. Clearly $e$ is a
cut if and only if $e^*$ is a cut, since $\delta e = \delta e^*$. For
any ray $\rho$ of $\Gamma$ and any cut  $e$ the ray $\rho$ can
intersect only one of $e$ or $e^*$ infinitely often, so we may speak  
of a ray $\rho$ belonging to a cut $e$. Also, if a ray $\rho$ belongs
to a cut $e$, and $\sigma$ is a ray belonging to the same end as
$\rho$ then $\sigma$ also belongs to the cut $e$, so we may sensibly
talk about the ends that belong to a given cut. 

Let $G \leq \Aut(\Gamma)$. 
Clearly if $e$ is a cut, then so is its image $g e$ under
the action of $g$, for any $g \in G$. 
 Given a cut $e_0$ we write $E = Ge_0
\cup Ge_0^*$ to denote the union of the orbits of $e_0$ and $e_0^*$ under
this action.  

\begin{thm}[Dunwoody~\cite{Dunwoody2}] \label{DunwoodysThm}
Let $\Gamma$ be an infinite locally-finite connected graph with more than one end and let $G \leq \Aut(\Gamma)$.
Then $\Gamma$ has a cut $e_0$ 
such that with $E = Ge_0 \cup Ge_0^*$ we have:
\begin{enumerate}
\item the subgraphs induced by $e_0$ and $e_0^*$ are both infinite and
  connected; 
\item and for all $e,f \in E$ 
there are only finitely many $g \in E$ such that $e \subset g
  \subset f$ and 
\item and for all $e,f \in E$ one of the following holds:
	\[e \subseteq f, \quad e \subseteq f^*, \quad e^* \subseteq f, 
\quad e^* \subseteq f^*.	\]
\end{enumerate}
\end{thm}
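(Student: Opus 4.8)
The plan is to construct the family $E$ from \emph{optimal} cuts, with the submodularity of the edge-coboundary as the main engine. First I would record the basic inequality: for any subsets $e,f \subseteq V\Gamma$,
\[
|\delta(e \cap f)| + |\delta(e \cup f)| \leq |\delta e| + |\delta f|,
\]
which follows by a routine edge-count, comparing the contribution of each edge to the two sides according to which of the four corners $e\cap f$, $e\cap f^*$, $e^*\cap f$, $e^*\cap f^*$ its endpoints lie in. In particular, if $e$ and $f$ are cuts then so are $e\cap f$ and $e\cup f$, and (applying the inequality to $e$ and $f^*$) so are $e\cap f^*$ and $e^*\cap f$.

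Next, since $\Gamma$ has more than one end there is a finite vertex set whose removal leaves at least two infinite components, and this yields a cut $e$ with both $\langle e\rangle$ and $\langle e^*\rangle$ infinite and $\delta e$ finite. Call such cuts \emph{good}. The set of good cuts is therefore non-empty, and as $|\delta e|$ is a positive integer it attains a minimum value $m$ there. I would take $e_0$ to be a good cut with $|\delta e_0| = m$ and use minimality to obtain condition (i): if $\langle e_0\rangle$ were disconnected, then since the induced subgraph has no edges between distinct components, each component $C$ satisfies $\delta C \subseteq \delta e_0$, so an infinite component would be a good cut of coboundary size $\leq m$, and equality would leave no edges for the remaining components, contradicting connectedness of $\Gamma$; hence both $\langle e_0\rangle$ and $\langle e_0^*\rangle$ are connected. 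Because the minimum value $m$ is clearly invariant under $\Aut(\Gamma)$ and under complementation, every member of $E = Ge_0 \cup Ge_0^*$ is again a good cut of coboundary size exactly $m$.

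The heart of the argument is to guarantee that the \emph{whole} orbit $E$ is nested, i.e.\ condition (iii). If two members $e,f \in E$ cross (all four corners non-empty), then applying submodularity to the two splittings and using $|\delta(e\cap f)| \geq m$, $|\delta(e\cup f)| \geq m$ (each relevant corner being a good cut of size at least $m$) forces equality throughout, so the corners are themselves size-$m$ good cuts and an \emph{uncrossing} replaces the crossing pair by a nested one at no cost in coboundary size. The subtle point is to carry this out simultaneously across the entire $G$-orbit and complement-orbit rather than for a single pair: one selects $e_0$ among the size-$m$ good cuts so as to minimise the number of cuts it crosses, and shows via the corner analysis that this minimum must be zero, so that $Ge_0 \cup Ge_0^*$ is pairwise nested. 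Isolating one representative whose full orbit is nested, and verifying that the selection is consistent, is where I expect the main obstacle to lie; it is essentially the whole force of Dunwoody's theorem.

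Finally, for the discreteness condition (ii) I would first observe that nesting makes the intermediate cuts manageable: if $e \subset g \subset f$ with $g \in E$, then $g$ contains the non-empty set $e$ and $g^*$ contains the non-empty set $f^*$, which rules out the ``crossing-to-complement'' alternatives, so any two such intermediate cuts are comparable and the cuts strictly between $e$ and $f$ form a \emph{chain} under inclusion. Condition (ii) thus reduces to bounding the length of this chain, and here local finiteness together with the common coboundary size $m$ enters through a counting/pigeonhole argument on the edges confined to the region $f\setminus e$; making this bound precise is the second technical hurdle. Assembling (i)--(iii) yields the required cut $e_0$ and system $E$, from which the associated locally-finite structure tree is built.
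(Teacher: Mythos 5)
The paper never proves this statement: it is quoted as Dunwoody's theorem with a citation to \cite{Dunwoody2}, so your proposal can only be measured against the argument in the literature (Dunwoody's original uncrossing proof, presented also in \cite{Dicks1}, \cite{Moller5} and \cite{Thomassen1}). Your outline shadows that argument faithfully: submodularity of $|\delta(\cdot)|$, minimisation of $|\delta e|$ over cuts with both sides infinite, connectedness of the sides from minimality (your derivation of (i) is correct and complete, and in fact it shows that \emph{every} good cut of minimal coboundary size is tight, which is exactly what is needed later so that the corners produced by uncrossing stay in the family), and the selection of a cut crossing as few members as possible of a $G$-invariant, complementation-invariant family, so that nestedness of the whole orbit $Ge_0\cup Ge_0^*$ follows by equivariance.

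The genuine gap is that both of your acknowledged hurdles rest on one lemma you never formulate, and it is the real engine of the theorem: in a locally finite connected graph, a fixed edge $a$ lies in $\delta g$ for only \emph{finitely many} cuts $g$ with $|\delta g|\leq m$ and both sides connected (proved by induction on $m$). Without it, first, your minimisation for (iii) is not well-posed as a descent: one must know that the number of tight $m$-cuts crossing a given tight $m$-cut $e$ is finite. This follows from the lemma because any such $f$ has $\delta f$ meeting a fixed finite connected subgraph $H$ of $\langle e\rangle$ containing the $e$-side endpoints of $\delta e$: connectedness of $\langle f\rangle$ and of $\langle f^*\rangle$ forces $\delta e$ to contain an edge with both ends in $f$ and another with both ends in $f^*$, so $H$ meets both sides of $f$ and, being connected, contains an edge of $\delta f$. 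Second, your proposed pigeonhole for (ii) ``on the edges confined to the region $f\setminus e$'' fails as stated: that region is in general infinite and carries infinitely many edges, and infinitely many tight $m$-cuts can have coboundary inside it (take $\Gamma$ a regular tree and $e\subset f$ two half-tree cuts); what is finite is the set of cuts \emph{between} $e$ and $f$, and one sees this by fixing a finite path $P$ from a vertex of $e$ to a vertex of $f^*$ --- every $g$ with $e\subset g\subset f$ separates the endpoints of $P$, so $\delta g$ meets $EP$ --- and then applying the lemma edge by edge along $P$ (your chain observation is correct but unnecessary once $P$ is in hand). Finally, two smaller points in the uncrossing itself: when $e$ and $f$ cross, a corner can be finite, hence not good and not subject to the bound $|\delta(\cdot)|\geq m$; since $e$, $e^*$, $f$, $f^*$ are infinite, adjacent corners cannot both be finite, so at least one diagonal pair of opposite corners consists of good cuts, and submodularity must be applied to $e,f$ or to $e,f^*$ accordingly --- your phrase ``relevant corners'' conceals this case analysis. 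Likewise the strict decrease of the crossing number at a corner requires checking that every member of the family nested with both $e$ and $f$ is nested with the corner, while $f$ crosses $e$ but not the corner. These deferred points are the substance of Dunwoody's proof rather than routine verification, so as it stands the proposal is a correct road map with the two decisive steps missing.
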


We call a cut satisfying the conditions of Dunwoody's theorem a
\emph{$D$-cut} and the set $E$ is called a {\em tree set}.

We now show how to construct a graph theoretic tree $T=T(E)$ from
$E$ that we call a \emph{structure tree} for $\Gamma$. The graph
$T(E)$ will have directed edges that come in pairs 
$\{ (u,v), (v,u) \}$, and there will be a bijection between the tree
set $E$ and the set of arcs of $T(E)$. We think of this bijection as a
labelling of the arcs of $T(E)$ by the elements of $E$.  
The arcs in $T(E)$ will be labelled in such a way that if $e$ labels
the arcs $(u,v)$ then its complement  $e^*$ labels the reverse arc
$(v,u)$. We shall write $e= (u,v)$ to mean that $e \in E$ labels the
arc $(u,v)$. Note that $T$ is not a directed graph in the sense defined in Section~\ref{sec_intro} above, 
since the arc relation on $T$ is symmetric.     

To construct $T(E)$ we begin with the disjoint union $Y$ of oppositely
oriented pairs of arcs $\{ e,e^* \}$ labelled by the elements $e \in
E$ and their complements. Then we want to glue together these edges to form a tree. Formally
this is achieved by defining an equivalence relation $\approx$ on the
set of vertices of $Y$.  
The gluing process is designed to result in a graph that will reflect
the structure of the poset $(E,\subseteq)$. Given $e,f \in E$ we write  
\[
f \ll e \ \mbox{if and only if} \ f \subset e \mbox{ and there is no cut }g
\mbox{ in } E \mbox{ such that } f\subset g\subset e.
\]
Now given two edges $e = (u,v)$ and $f = (x,y)$ in $Y$ we write $v
\approx x$ if $x=v$ or if $f \ll e$. Using the properties listed in
Theorem~\ref{DunwoodysThm} it may be shown that $\approx$ is an
equivalence relation on the set of vertices of $Y$; for details see
\cite[Page~12]{Moller5} and \cite[Theorem~2.1]{Dunwoody4}. 
Now define $T(E)$ to be the the graph $Y / \approx$ obtained by
identifying the $\approx$-related vertices of $Y$. Since edges have not
been identified, the set of arcs of $T(E)$ is still in bijective
correspondence with the set $E$. The structure of $(E,\subseteq)$ is
reflected in $T(E)$ since, from its construction, for arcs $e$ and $f$
in $T(E)$ there is a directed edge path from $e$ to $f$ if and only if
$f \subseteq e$ in $(E,\subseteq)$. Again using the conditions listed
in Theorem~\ref{DunwoodysThm}, it may then be shown that $T(E)$ is a
connected and has no simple cycles of length greater than $2$; in
other words $T(E)$ is a tree (see \cite[Page~13]{Moller5} for
details). 

Next we want to define a mapping $\phi: V\Gamma \rightarrow VT$ from
the vertex set of $\Gamma$ to the vertex set of $T = T(E)$, that we
call the \emph{structure mapping}. Given $v \in V\Gamma$, let $e =
(x,y) \in E$ be an arc such that $v \in e \subseteq V\Gamma$ and where
$e$ is minimal in $(E,\subseteq)$ with respect to containing $v$, and
define $\phi(v) = y$. That $\phi$ is  well-defined 
 is proved using the properties listed in
Theorem~\ref{DunwoodysThm}; see \cite{Moller5} for details. In general  
$\phi$ need not be either surjective or injective; see
\cite{Moller5}. Now the subgroup $G \leq \Aut(\Gamma)$ acts on $E$ and thus $G$ acts on $Y$. The way that we have 
identified vertices in $Y$ is clearly covariant with the action of $G$ and thus 
$G$ acts on $T$ as a group of automorphisms. 
This action commutes with the
mapping $\phi$, so  
for any $g \in G$ and $v \in V\Gamma$ we have $\phi(g v) = g
\phi(v)$.  

Just to illustrate how these properties can be used we show that if
$G \leq \Aut(\Gamma)$ acts transitively on $\Gamma$ then $T$ will not have any leaves
(vertices of degree 1).  Suppose $e=(u,v)$ is an arc in $T$.
  Since the graph is locally finite and $G$ acts
transitively we can find an element $g\in G$ such that $g(\delta
e)$ is a subset of the set of edges of $\Gamma$ that are contained entirely within $e \subseteq V\Gamma$.
Then either $g(e)\subseteq e$ or $g(e)^*\subseteq e$
and we see that $v$ cannot have degree 1.

The structure mapping gives us a way to relate 
structural information about the graph $\Gamma$ to information about
the tree $T$.  Note that if $G$ acts vertex transitively on 
$\Gamma$ it does not necessarily follow that $G$ acts transitively on $T$, 
since $\phi$ is not necessarily
surjective. There is another map $\Phi$ that maps each end of $\Gamma$
either to an end of $T$ or a vertex in $T$ (see \cite{Moller5}).
This map need not be injective, but the preimage of an end in $T$
consists only of a single end in $\Gamma$.  Hence, for instance we see
that if $\Gamma$ has precisely two ends then $T$ will also have  two
ends and will be a line.

Since in this paper we work with directed graphs we must
explain how the ideas outlined above may be applied in this
context. By the ends of a digraph $D$ we simply mean the ends of the
underlying undirected graph $\Gamma(D)$ of $D$. Now $V\Gamma(D) = VD$
and the theory described above then applies to $\Gamma(D)$. So by
saying $e \subseteq VD$ is a $D$-cut we mean that as a subset of
$V\Gamma(D)$ it is a $D$-cut of $\Gamma(D)$. Clearly $\Aut(D)$ is a 
subgroup of $\Aut(\Gamma(D))$ and so we can set $G = \Aut(D) \leq \Aut(\Gamma(D))$ and
apply the above theory, letting $e_0$ be a $D$-cut and considering the tree set $E = G e_0 \cup G e_0^*$ and corresponding structure tree $T = T(E)$. 
The definition of the mapping
$\phi: VD \rightarrow VT$ is then inherited naturally since $VD =
V\Gamma(D)$. 

\smallskip

\begin{quote}
\emph{Throughout, unless otherwise stated, $D$ will denote an infinite locally-finite connected digraph with more than one end, $G = \Aut(D)$, $e_0$ will denote a fixed $D$-cut of $D$, $E = G e_0 \cup G e_0^*$ the associated tree set, $T = T(E)$ the structure tree, and $\phi:VD \rightarrow VT$ the corresponding structure map.} 
\end{quote}

\subsection*{Reachability relations and descendants}

An \emph{alternating walk} in a digraph $D$ is a sequence of vertices
$(x_1,\ldots, x_n)$ such that either $(x_{2i-1},x_{2i})$
and $(x_{2_i+1},x_{2i})$ are arcs for all $i$, or
$(x_{2i},x_{2i-1})$ and $(x_{2i},x_{2i+1})$ are arcs for all $i$. We
say that 
$e'$ is {\em reachable} from  $e$ if there is an alternating walk
$(x_1,\ldots,x_n)$ such that the first arc traversed is $e$ and the
last one is 
$e'$. This is denoted by
$e \mathcal{A} e'$. Clearly the relation $\mathcal{A}$ is an
equivalence relation on  $ED$. 
The equivalence class containing the arc $e$ is denoted by
$\mathcal{A}(e)$. 
Let $\lb \mathcal{A}(e) \rb$ denote the subdigraph of $D$ induced by $\mathcal{A}(e)$. If $D$ is 
$1$-arc-transitive, then the digraphs $\lb \mathcal{A}(e) \rb$, for $e \in ED$, are all isomorphic to
a fixed digraph, which will be denoted by $\Delta(D)$.
The following basic result about reachability
graphs was proved in \cite{Cameron2}.  

\begin{prop}\cite[Proposition~1.1]{Cameron2}\label{ReachabilityBipartite}
Let $D$ be a connected $1$-arc-transitive digraph. Then $\Delta(D)$ is
$1$-arc-transitive and connected. Further, either 
\begin{enumerate}
\item $\mathcal{A}$ is the universal relation on $ED$ and $\Delta(D) = D$, or
\item $\Delta(D)$ is bipartite.
\end{enumerate}
\end{prop}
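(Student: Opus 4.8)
The plan is to establish the three assertions in turn, fixing a base arc $e_0$ and writing $\Delta = \langle \mathcal{A}(e_0) \rangle$, so that the arcs of $\Delta$ are precisely the reachability class $\mathcal{A}(e_0)$. The starting observation is that every automorphism of $D$ preserves arcs together with their orientation, hence carries alternating walks to alternating walks; therefore $g(\mathcal{A}(e)) = \mathcal{A}(g(e))$ for all $g \in G$ and $e \in ED$. For \emph{connectedness}, given two vertices $p,q$ of $\Delta$, each is an endpoint of an arc of $\mathcal{A}(e_0)$; since these two arcs are reachable from one another there is an alternating walk joining them, and every arc traversed along such a walk is reachable from its first arc (a prefix of an alternating walk is again an alternating walk) and so lies in $\mathcal{A}(e_0)$. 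Thus the whole walk lies inside $\Delta$ and joins $p$ to $q$. For \emph{$1$-arc-transitivity}, take arcs $e, e' \in \mathcal{A}(e_0)$ and use $1$-arc-transitivity of $D$ to choose $g \in G$ with $g(e) = e'$; then $g(\mathcal{A}(e_0)) = \mathcal{A}(g(e)) = \mathcal{A}(e') = \mathcal{A}(e_0)$, so $g$ preserves the arc set of $\Delta$ and its restriction $g \rrestriction \Delta$ is an automorphism of $\Delta$ carrying $e$ to $e'$.

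The substance is the dichotomy, for which I would first prove that in the connected $1$-arc-transitive digraph $\Delta$ \emph{either} every vertex is incident (within $\Delta$) both to an in-arc and to an out-arc, \emph{or} every vertex is incident to arcs of only one kind. Suppose some vertex $w$ has both an in-arc $(a,w)$ and an out-arc $(w,b)$ in $\mathcal{A}(e_0)$. Given any other vertex $z$ of $\Delta$, it is incident to some arc; if $z$ is a tail, say of $(z,z')$, then by arc-transitivity of $\Delta$ there is an automorphism $\sigma$ with $\sigma(w,b) = (z,z')$, and then $\sigma(a,w)$ is an in-arc at $z$; the symmetric argument, transporting $(w,b)$ by an automorphism sending $(a,w)$ onto an arc with head $z$, handles the case where $z$ is a head. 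Hence if one vertex has both kinds of arc then all do; otherwise every vertex is a pure source or a pure sink, and sorting the vertices accordingly gives a partition $X \cup Y$ with every arc directed from $X$ to $Y$, so $\Delta$ is bipartite in the sense of Section~2 (both parts nonempty since $\Delta$ has at least one arc). This is alternative~(ii).

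It remains to show that the other branch forces alternative~(i). The key local fact is that at a single vertex all in-arcs are mutually reachable and all out-arcs are mutually reachable: if $(a,w)$ and $(a',w)$ are both arcs then $(a,w,a')$ is an alternating walk, and if $(w,b)$ and $(w,b')$ are both arcs then $(b,w,b')$ is an alternating walk. Consequently, as soon as a vertex $w$ of $\Delta$ carries both an in-class-arc and an out-class-arc, \emph{every} arc of $D$ incident to $w$ lies in $\mathcal{A}(e_0)$, so every $D$-neighbour of $w$ also lies in $\Delta$ and again carries both an in- and an out-class-arc. Propagating this along paths, the connectedness of $D$ forces $\mathcal{A}(e_0) = ED$, whence $\mathcal{A}$ is the universal relation and $\Delta = D$; this is alternative~(i). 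I expect the main obstacle to be precisely this final propagation: one must combine the single-vertex reachability observation with arc-transitivity to see that the ``both in- and out-arc'' condition is inherited by neighbours, and then spreads across the whole of the connected digraph $D$.
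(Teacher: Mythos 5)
Your proof is correct, and all three steps (connectedness via prefixes of alternating walks, arc-transitivity via $g(\mathcal{A}(e))=\mathcal{A}(g(e))$, and the dichotomy via the local fact that all in-arcs at a vertex are mutually reachable, as are all out-arcs, followed by propagation along the connected digraph) check out against the paper's definitions. Note that the paper itself gives no proof of this statement, citing it as Proposition~1.1 of Cameron, Praeger and Wormald; your argument is essentially the standard one from that source, so there is nothing to flag.
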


In this paper in most instances we shall be working with digraphs $D$ were $\Delta(D)$ is bipartite. Although $\Delta(D)$ is a directed bipartite graph, we shall often identify it with its undirected underlying bipartite graph (from which the original directed bipartite graph may be recovered by orienting all the edges from one part of the bipartition to the other). 

A question that still remains open from \cite{Cameron2} is whether
there exists a locally-finite highly arc-transitive digraph for which
the reachability relation $\mathcal{A}$ is universal; see
\cite{Malnic2} and \cite{Seifter1}. We shall see below that for the
class of highly arc-transitive digraphs considered in this paper
$\mathcal{A}$ cannot be universal, and hence by
Proposition~\ref{ReachabilityBipartite}, $\Delta(D)$ will be bipartite.   

For a vertex $u$ in $D$ the set of \emph{descendants} of
$u$ is the set of all vertices $v$ such that there is a directed 
path in $D$ from $u$ to $v$. This set is denoted by $\mathrm{desc}(u)$. 
For
$A \subseteq VD$ we define 
$\mathrm{desc}(A) = \bigcup_{v \in A}{\mathrm{desc}(v)}$.
The set of \emph{ancestors} $\mathrm{anc}(v)$ of a vertex $v$ is the
set of those vertices of $D$ for which $v$ is a descendant.
We shall also frequently be interested in the subdigraph induced by this set of vertices which we shall also denote by $\mathrm{desc}(u)$, similarly for $\mathrm{anc}(v)$.
 It was shown in \cite{Moller6} that 
if $D$ is a locally finite infinite connected highly arc-transitive
digraph then  
for any directed line $L$ in $D$ 
the subdigraph induced by $\mathrm{desc}(L)$ is highly arc-transitive
and has more than one end.
Here by a directed line $L$ we mean 
a two-way infinite sequence $\ldots, v_{-1}, v_0, v_1, \ldots$ such that
$(v_i,v_{i+1})$ is an arc for every $i \in \mathbb{Z}$. 

Note that no generality is lost by assuming that the digraphs we
consider are connected, since for each of the symmetry conditions
under consideration any disconnected countable example would be isomorphic to a
finite (or countable) number of disjoint copies of one of its
connected components.

\section{$2$-arc-transitive digraphs}

Before attacking the classification of  
connected-homogeneous digraphs, we shall first
present some preliminary results for $2$-arc-transitive locally-finite digraphs
with more than one end. These digraphs were considered in
\cite{Seifter1}. 

The following lemma builds on \cite[Lemma~2.4]{Seifter1}. 

\begin{sloppypar}
\begin{lem}\label{biglemma}
Let $D$ be a locally-finite connected 
$2$-arc-transitive digraph with more than one end and $G=\aut(D)$. 
We let $e_0$ denote
a D-cut and $E=Ge_0\cup Ge_0^*$ the associated tree set. In addition  
$T=T(E)$ is the structure tree and $\phi:VD\rightarrow VT$ the structure map.
Then we have the following.
\begin{enumerate}[(i)]
\item \label{Seifter} For all $e \in Ge_0 \cup G {e_0}^{*}$ there is
  no $2$-arc $(a,b,c)$ with $a,c \in e$ and $b \in e^*$. 
\item There exists a positive integer $N$ such that for any directed path
$(v_0,v_1,\ldots,v_k)$ of length $k$ in $D$ we have $d_T(\phi(v_0),\phi(v_k)) = kN$. \label{distances}
\item \label{pathsagain} If $(v,x_0,x_1,\ldots,x_r)$ and $(v,y_0,y_1,\ldots,y_s)$ are directed paths based at $v$ and $\phi(x_r) = \phi(y_s)$ then $r=s$.
\item  If $u,v \in D$ and there is a directed path in $D$ from $u$ to $v$ of length $n$ then every directed path from $u$ to $v$ has length $n$. In particular, there are no directed cycles in $D$. \label{nocycles}

\end{enumerate}
\end{lem}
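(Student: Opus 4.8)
The plan is to deduce part (iv) almost for free from part (ii), which is the real engine here: it converts directed-path length in $D$ into tree-distance in $T$, asserting that a directed path of length $k$ always projects under $\phi$ to a pair of vertices of $T$ at distance exactly $kN$ for a fixed positive integer $N$. The crucial observation is that $d_T(\phi(u),\phi(v))$ is a single well-defined number attached to the pair $(u,v)$, so any two directed paths between the same endpoints are forced to report the same value of $kN$.

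For the first assertion I would take two directed paths $(u = a_0, a_1, \ldots, a_n = v)$ and $(u = b_0, b_1, \ldots, b_m = v)$ from $u$ to $v$, of lengths $n$ and $m$. Applying part (ii) to each gives $d_T(\phi(u),\phi(v)) = nN$ and $d_T(\phi(u),\phi(v)) = mN$, so $nN = mN$, and since $N$ is a positive integer this forces $n = m$. There is no case analysis to carry out.

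For the ``in particular'' clause I would argue separately, because a directed cycle is a closed walk rather than a path (its endpoints coincide), so the first assertion does not literally apply, and the two arcs of a cycle issuing from a common vertex travel in opposite directions around it. Suppose for contradiction that $D$ contains a directed cycle $v_0 \to v_1 \to \cdots \to v_{\ell-1} \to v_0$; since $D$ is a digraph (no loops, and no pair of vertices joined by arcs in both directions) we have $\ell \geq 3$. The initial segment $(v_0, v_1, \ldots, v_{\ell-1})$ is a directed path of length $\ell - 1$, so part (ii) gives $d_T(\phi(v_0), \phi(v_{\ell-1})) = (\ell-1)N$, while the closing arc $v_{\ell-1} \to v_0$ is a directed path of length $1$, giving $d_T(\phi(v_{\ell-1}), \phi(v_0)) = N$. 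Since $d_T$ is symmetric these values coincide, whence $(\ell-1)N = N$ and therefore $\ell = 2$, contradicting $\ell \geq 3$.

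I do not expect a genuine obstacle once part (ii) is available; the only point requiring care is not to conflate the no-cycles statement with the first assertion, since the cycle must be handled through the symmetry of the tree metric rather than by a direct appeal to the path-length statement.
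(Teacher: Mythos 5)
Your proposal proves only part (iv), and it does so by taking part (ii) as given. But part (ii) is the substantive content of the lemma, not a prior result you may assume: the statement to be proved includes (i)--(iv), and your proposal contains no argument for (i), (ii), or (iii). In the paper, (i) is imported from Seifter (\cite[Lemma~2.4]{Seifter1}), and (ii) is then proved by induction on the path length: one sets $N = d_T(\phi(v_0),\phi(v_1))$ for an arc $v_0 \rightarrow v_1$ (well defined and positive by arc-transitivity), and in the inductive step, if $d_T(\phi(v_0),\phi(v_{k+1})) < (k+1)N$ then $\phi(v_{k-1})$ and $\phi(v_{k+1})$ would lie in the same component of $T \setminus \{\phi(v_k)\}$, so that $(v_{k-1},v_k,v_{k+1})$ would be a $2$-arc crossing a cut and returning, contradicting (i). Part (iii) is then immediate from (ii). None of this appears in your write-up, so as a proof of the lemma as stated there is a genuine gap: the ``real engine,'' as you yourself call it, is left unproven, and the one nontrivial idea of the proof (using the Seifter-type non-return property of cuts to force tree-distances to add up along directed paths) is missing entirely.

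What you do prove is correct, and your handling of the directed-cycle clause is in one respect more careful than the paper's. The paper disposes of cycles by comparing the trivial path $(v)$ of length $0$ with ``following the directed cycle,'' which under the paper's own definition (a path is a walk without repeated vertices) is a walk rather than a path, since $v$ repeats; to make that literal one should observe that the induction proving (ii) works verbatim for directed walks (antisymmetry forces $v_{k-1} \neq v_{k+1}$, so the $2$-arc in the inductive step is genuine). Your route avoids the issue: you apply (ii) only to the honest path $(v_0,\dots,v_{\ell-1})$ and to the single arc $v_{\ell-1} \rightarrow v_0$, then use symmetry of $d_T$ to get $(\ell-1)N = N$, hence $\ell = 2$, which antisymmetry excludes. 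That is a clean fix, and your derivation of the first assertion of (iv) from (ii) ($nN = mN \Rightarrow n = m$ since $N > 0$) matches the paper's. But this polish does not repair the main omission: to have a proof of the lemma you must still supply (ii) (and account for (i) and (iii)), since everything you wrote is downstream of it.
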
 \end{sloppypar}
\begin{proof}
Part (i) is proved in \cite[Lemma~2.4]{Seifter1}.

We prove part (ii) by induction on the length of the path. Given an
arc $v_0\rightarrow v_1$ we set $N = d_T(\phi(v_0), \phi(v_1))$.  Since
$D$ is arc-transitive this number $N$ does not depend on the choice of   the arc
$v_0\rightarrow v_1$ and $N>0$.
 Now suppose the result holds for all $j \leq k$ and consider a
directed path $(v_0,v_1,\ldots,v_k,v_{k+1})$ of length $k+1$. Certainly we have
$d_T(\phi(v_k),\phi(v_{k+1}))=N$ and $d_T(\phi(v_0),\phi(v_{k}))=kN$ by
induction hypothesis. Hence $d_T(\phi(v_0),\phi(v_{k+1})) \leq (k+1)N$. If  
$d_T(\phi(v_0),\phi(v_{k+1})) < (k+1)N$ then 
$\phi(v_{k-1})$ and $\phi(v_{k+1})$
belong to the same component of $T \setminus \{
\phi(v_{k}) \}$. Let $e$ denote the arc in $T$ that starts 
in $\phi(v_k)$ and contains $v_{k+1}$. Then
$(v_{k-1}, v_{k}, v_{k+1})$ is a $2$-arc
with properties contradicting part (i). 

Parts (iii) and (iv) follow immediately from part (ii). The last sentence of (iv) follows since if there were a directed cycle with initial and terminal vertex $v$ then the path $(v)$ is a path of length $0$ from $v$ to itself, while following the directed cycle gives a longer path.
\end{proof}

\begin{lem} \label{just2ends}
Let $D$ be a locally-finite connected 
$2$-arc-transitive digraph with more than one end and $G=\aut(D)$. 
We let $e_0$ denote
a D-cut and $E=Ge_0\cup Ge_0^*$ the associated tree set. In addition  
$T=T(E)$ is the structure tree and $\phi:VD\rightarrow VT$ the structure map. Let $u$ be a vertex in $D$.
 Then $D$ has two ends if and only if for every $r\in \mathbb{Z}$ the
 map $\phi$ is constant on $ D^r(u)$.  
\end{lem}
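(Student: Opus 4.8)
The plan is to prove the two implications separately, after reducing to a statement about the structure tree $T$. Throughout I use that $D$ is vertex-transitive with $d^+(u),d^-(u)\ge 1$: a connected $1$-arc-transitive digraph has at most two vertex orbits, and if the two orbits were distinct then all arcs would run between them and $D$ would contain no directed $2$-arc; since our $D$ does, it is vertex-transitive and every vertex has an in- and an out-neighbour. I also use the fact recorded in Section~2 that, because $G=\Aut(D)$ acts vertex-transitively, $T$ has no leaves, together with the observation that $D$ has exactly two ends if and only if $T$ is a line. Thus it suffices to show that $T$ is a line if and only if $\phi$ is constant on every $D^r(u)$.

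For the forward implication assume $T$ is a line, and fix a coordinate $h=\iota\circ\phi:VD\to\mathbb{Z}$, where $\iota$ identifies $VT$ with $\mathbb{Z}$. By Lemma~\ref{biglemma}(\ref{distances}) a directed arc $v_0\to v_1$ satisfies $|h(v_1)-h(v_0)|=N$, while for a directed path of length two the same lemma gives $|h(v_2)-h(v_0)|=2N$, forcing the two steps to have equal sign; hence $h$ is strictly monotone along every directed path. Using that each vertex has an in-neighbour, I then propagate the sign: if $u\to w$, $u\to w'$ and $a\to u$, then $a\to u\to w$ and $a\to u\to w'$ are directed $2$-arcs, so $h(w)-h(u)$ and $h(w')-h(u)$ both agree in sign with $h(u)-h(a)$; the symmetric argument applies at a common head, and connectedness of $D$ makes the sign globally constant. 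Orienting the line so that every arc increases $h$ by $N$, any $w\in D^r(u)$ is joined to $u$ by a directed path of length $|r|$, so $h(w)=h(u)+rN$ depends only on $r$; thus $\phi$ is constant on each $D^r(u)$.

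For the converse assume $\phi$ is constant on every $D^r(v)$ for every vertex $v$ (which follows from the hypothesis at $u$ by vertex-transitivity). Then $\phi(\mathrm{desc}(v))$ lies on a single geodesic ray $\rho_v^+$ out of $\phi(v)$ and $\phi(\mathrm{anc}(v))$ on the opposite ray $\rho_v^-$ — opposite because any directed $2$-arc through $v$ places $\phi(v)$ at the midpoint of a geodesic of length $2N$ by Lemma~\ref{biglemma}(\ref{distances}) — so these determine a line $\ell_v$ through $\phi(v)$. \textbf{The key step, and the main obstacle, is to show that $\ell_v$ is independent of $v$.} If $v$ is an out-neighbour of $u$ then $\mathrm{desc}(v)\subseteq\mathrm{desc}(u)$ and $\{u\}\cup\mathrm{anc}(u)\subseteq\mathrm{anc}(v)$, which forces $\rho_v^+$ to be a subray of $\rho_u^+$ and $\rho_v^-$ to contain $\phi(u)$ together with $\rho_u^-$; comparing these two bi-infinite geodesics in the tree $T$ yields $\ell_v=\ell_u$. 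The same holds for in-neighbours, so by connectedness $\ell_v$ equals a fixed line $\ell$ for all $v$, whence $\phi(VD)\subseteq\ell$. Finally, every edge $e$ of $T$ is a cut with both $e$ and $e^*$ infinite by Dunwoody's theorem, so each of $e,e^*$ contains a vertex of $D$ whose $\phi$-image lies on the corresponding side of $e$; hence every edge of $T$ lies on a geodesic between two image vertices, and $T$ is the convex hull of $\phi(VD)$. As $\ell$ is convex this gives $T=\ell$, a line, and therefore $D$ has two ends, completing the proof.
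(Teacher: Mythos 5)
Your proposal is correct, and both halves take a genuinely different route from the paper's, resting on the same engine (Lemma~\ref{biglemma}(ii)) but deploying it differently. For the forward direction the paper argues by contradiction at a single radius: if $x,y\in D^r(u)$ had distinct images, these would be the two points of the line $T$ at distance $rN$ from $\phi(u)$, and then any $z\in D^{-r}(u)$ would have $\phi(z)$ equal to one of $\phi(x),\phi(y)$ while lying at distance $2rN$ from both; your global height function $h$ with the sign-propagation argument (equal signs along directed $2$-arcs, then across common tails and heads, then along walks) is a stronger and arguably cleaner statement -- it shows every arc shifts $h$ by exactly $+N$, not merely that $\phi$ is constant on each $D^r(u)$. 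In the converse, your propagation of the line $\ell_v$ along in- and out-neighbours is in substance the paper's ``easy induction'' showing $\phi(VD)\subseteq L$, and you are right to flag it as the key step; but your endgame differs genuinely. The paper counts orbits: $G$ is transitive on the edges of $T$, hence has at most two vertex orbits, $\im{\phi}$ is one of them, and $T$ has no leaves, forcing $T=L$. You instead argue $T$ is the convex hull of $\im{\phi}$ because every edge of $T$ is a cut with both sides infinite and so separates two $\phi$-images. This works, but it silently uses the crossing property that $v\in e$ forces $\phi(v)$ into the component of $T$ on the head side of the arc $e$; that fact is true and follows from the definition of $\phi$ via minimal cuts together with the correspondence between inclusions in $(E,\subseteq)$ and directed edge-paths in $T$ (for the minimal $f\ni v$ one has $f\subseteq e$ or $f^*\subsetneq e$, and in either case the head of $f$ lies on the head side of $e$), but it is nowhere stated in the paper, so you should record a proof of it. Your remaining implicit assumptions -- that every vertex has an in- and an out-neighbour (needed both to pick $a\to u$ and to make $\anc(u)$ and $\desc(u)$ infinite), and the final inference that $T$ being a line yields exactly two ends for $D$ -- are precisely the points the paper's own proof also leaves unjustified, so you are at parity there.
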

\begin{proof}
Assume first that $D$ has two ends.  Then $T$ has just two ends and is
a line.  Let $x$ and 
$y$ be vertices in $D^r(u)$ for some integer $r$.  
Suppose that $r$ is non-negative (the case that $r$ is negative may be dealt with using a dual argument). 
By
Lemma~\ref{biglemma} we see that 
$d_T(\phi(u), \phi(x))=d_T(\phi(u), \phi(y))\neq 0$.   Thus 
there are only two possibilities for $\phi(x)$,
one on each side of $\phi(u)$.  Suppose $x$ and $y$ have distinct
images under $\phi$.    If $z\in D^{-r}(u)$ then there is a
directed path of length $r$ from $z$ to $u$ and thus 
$d_T(\phi(u),\phi(z))=d_T(\phi(u), \phi(x))=d_T(\phi(u), \phi(y))$ by
Lemma~\ref{biglemma}(ii).  Hence $\phi(z)$ must be either
equal to $\phi(x)$ or $\phi (y)$, but that would contradict
Lemma~\ref{biglemma}(ii) because there are directed
paths of length $2r$ from $z$ to both $x$ and $y$ and thus
$\phi(z)\neq \phi(x)$ and $\phi(z)\neq \phi(y)$.

Assume now that if $r$ is an integer then $\phi(x) = \phi(y)$ for every $x,y
 \in D^r(u)$.  Note that if $x\in  D^+(u)$ and $y\in D^-(u)$ then
 $\phi(x)\neq \phi (y)$.  From the assumptions and
 Lemma~\ref{biglemma}(ii) we conclude that the
 images under $\phi$ of $\desc(u)$ and $\anc(u)$ all lie on a line $L$.
 If $v$ is some vertex, either
 in $\desc(u)$ or $\anc(u)$ then all the images under $\phi$ of 
$\desc(v)$ or $\anc(v)$ lie on $L$.  An easy induction now shows that
the whole image of $\phi$ is contained in $L$.  The group $G$ has just
 one orbit on the edges of $T$ and thus at most two orbits on the
 vertices of $T$.  The image of $\phi$ is an orbit of $G$.   
From this and the fact that $T$ has no leaves we
 conclude that $T=L$ and that $D$ has just two ends.
\end{proof}

Before stating the next theorem we need the following observation.

\begin{lem}\label{decandanc}
Let $D$ be a vertex transitive digraph, and let $u \in VD$. Then
$\mathrm{desc}(u)$ is a tree if and only if $\mathrm{anc}(u)$ is a
tree.
\end{lem}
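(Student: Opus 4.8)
The plan is to reduce the statement to a single implication and then prove that implication by converting an undirected cycle in a set of descendants into an undirected cycle in a set of ancestors. First I would record two easy reductions. Since every descendant of $u$ is joined to $u$ by a directed path all of whose vertices again lie in $\mathrm{desc}(u)$, the induced subdigraph $\langle \mathrm{desc}(u)\rangle$ has connected underlying undirected graph, and likewise for $\mathrm{anc}(u)$; hence for either set being a tree is equivalent to containing no undirected cycle. Secondly, passing to the reverse digraph $D'$ (obtained by reversing every arc) leaves $\Gamma(D)$ and $\Aut(D)$ unchanged, so $D'$ is again vertex-transitive, while $\mathrm{desc}_{D'}(u) = \mathrm{anc}_D(u)$ and $\mathrm{anc}_{D'}(u) = \mathrm{desc}_D(u)$, with the same underlying undirected graphs. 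Thus it is enough to prove the single implication: if $\langle \mathrm{desc}(u)\rangle$ is not a tree then $\langle \mathrm{anc}(u)\rangle$ is not a tree; applying this to $D'$ yields the reverse implication and hence the claimed equivalence.

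To prove this implication I would first dispose of directed cycles. If $D$ contains any directed cycle then, by vertex-transitivity, there is a directed cycle through every vertex, and such a cycle lies inside both $\mathrm{desc}(v)$ and $\mathrm{anc}(v)$ for each $v$; so neither set is ever a tree and the equivalence holds trivially. Hence I may assume $D$ has no directed cycle. Now suppose $\langle \mathrm{desc}(u)\rangle$ contains an undirected cycle $C$. As $C$ is not a directed cycle it has a \emph{sink}, that is, a vertex $s$ whose two neighbours $p,q$ on $C$ satisfy $p \to s \leftarrow q$ with $p \neq q$. Since $p,q \in \mathrm{desc}(u)$ there are directed paths $u \rightsquigarrow p$ and $u \rightsquigarrow q$, and appending the arcs $p\to s$ and $q\to s$ produces two directed walks from $u$ to $s$ whose final edges $\{p,s\}$ and $\{q,s\}$ are distinct. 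Every vertex occurring on these walks has a directed path to $s$, so all of them lie in $\mathrm{anc}(s)$.

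It remains to extract from this an undirected cycle in $\langle \mathrm{anc}(s)\rangle$, which then finishes the argument: an automorphism carrying $s$ to $u$ (vertex-transitivity) shows $\langle \mathrm{anc}(u)\rangle$ is not a tree. The key point is that, because $D$ has no directed cycle, neither directed path $u\rightsquigarrow p$ nor $u\rightsquigarrow q$ passes through $s$, since a path through $s$ together with the arc into $s$ would create a directed cycle. Thus in the graph $\langle \mathrm{anc}(s)\rangle$ the vertex $u$ is joined to both $p$ and $q$ by paths avoiding $s$, whereas $p$ and $q$ are both adjacent to $s$; were $\langle \mathrm{anc}(s)\rangle$ a tree, deleting $s$ would separate its neighbours $p$ and $q$ into different components, contradicting the fact that both remain connected to $u$.

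I expect the main obstacle to be precisely this conversion step. A cycle in $\mathrm{desc}(u)$ does not obviously place all of its vertices inside a single ancestor-set, and a naive attempt to find a common descendant of the whole cycle fails in general. The resolution is to use only one sink of the cycle together with the two directed paths already available from $u$, which suffices to produce two distinct routes into the common vertex $s$. Care is also needed to treat directed cycles separately at the outset, since it is the absence of directed cycles that makes the path-avoidance claim and the subsequent vertex-deletion argument valid.
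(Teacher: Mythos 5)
Your proof is correct and follows essentially the same route as the paper's: both dispose of directed cycles first, then pick a sink $s$ of a non-directed cycle with $p \to s \leftarrow q$, use the two directed paths from $u$ through $p$ and through $q$ into $s$ to place a cycle inside $\mathrm{anc}(s)$, and transfer to $\mathrm{anc}(u)$ by vertex-transitivity. Your only departures are cosmetic: the reverse-digraph symmetry in place of the paper's ``the converse is proved similarly'', and a tree-separation argument at $s$ (valid, given your prior global disposal of directed cycles, which guarantees the two paths avoid $s$) where the paper simply observes that the union of two distinct directed paths from $u$ to $s$ contains a cycle.
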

\begin{proof}
We show that $\mathrm{desc}(u)$ is not a tree if and only if 
$\mathrm{anc}(u)$ is not a tree.

Suppose $\mathrm{desc}(u)$ is not a tree.  Suppose first that  
there is a directed cycle in 
$\mathrm{desc}(u)$.  By vertex transitivity we may assume that the
vertex $u$ is in this cycle.  Then this cycle is also contained in 
$\mathrm{anc}(u)$ which is then not a tree.  
Suppose now that
$\mathrm{desc}(u)$ has a cycle that is not a directed cycle.  
Hence there are distinct vertices $v$, $x$ and $y$ in the
cycle 
such that $x\rightarrow v$ and $y\rightarrow v$ are both arcs in the cycle.
Since $x$ and $y$ are in  $\mathrm{desc}(u)$ we can find directed
paths $(u, \ldots, x, v)$ and $(u, \ldots, y, v)$ from $u$ to $v$.
The union of these two paths will contain a cycle and this cycle is
contained in $\mathrm{anc}(v)$.  Hence  $\mathrm{anc}(v)$ is not a tree and by vertex
transitivity   $\mathrm{anc}(u)$ is also not a tree.
The converse is proved similarly. 
\end{proof}

Given $a,b \in T$, the structure tree, we use $P(a,b)$ to
denote the set of vertices of $T$ belonging to the (unique) path in
$T$ from $a$ to $b$. Recall that the action of a group $G$ on a set $X$ is said to be \emph{primitive} if no non-trivial equivalence relation on $X$ is preserved by the action. 

\begin{thm} \label{desctree}
Let $D$ be a $2$-arc-transitive locally-finite connected digraph with more than
two ends and $G = \Aut(D)$.  Let $u \in VD$ and 
assume that $G_u$ acts primitively on  $D^+(u)$ (or on
$D^-(u)$).
Then $\desc(u)$ and $\anc(u)$ are trees.
\end{thm}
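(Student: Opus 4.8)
The plan is to reduce to a single statement and then argue by contradiction using the structure map. By Lemma~\ref{decandanc} it suffices to prove that $\desc(u)$ is a tree, since $\desc(u)$ is a tree if and only if $\anc(u)$ is. Moreover, reversing all arcs of $D$ produces a digraph that is again $2$-arc-transitive with the same number of ends, and this operation interchanges $\desc$ with $\anc$ and $D^+(u)$ with $D^-(u)$; hence I may assume throughout that $G_u$ acts primitively on $D^+(u)$. By Lemma~\ref{biglemma}(iv) the digraph $D$ has no directed cycles and $\desc(u)$ is connected, so $\desc(u)$ is a tree precisely when, for each $v\in\desc(u)$, there is a \emph{unique} directed path from $u$ to $v$. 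Suppose for contradiction that uniqueness fails.

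Choose $r$ minimal (this is well-defined by vertex-transitivity) such that some vertex has two distinct directed paths of length $r$ to a common endpoint, and realise it at $u$, obtaining internally disjoint directed paths $(u=p_0,\dots,p_r=v)$ and $(u=q_0,\dots,q_r=v)$; minimality forces them to diverge at $u$, so $p_1\neq q_1$. Each path maps under $\phi$ to a walk in $T$ whose consecutive images lie at distance $N$ (Lemma~\ref{biglemma}(ii)) and whose endpoints lie at distance $rN$; since $T$ is a tree, a walk of length $rN$ between points at distance $rN$ is the unique geodesic, whence $\phi(p_i)=\phi(q_i)$ for all $i$, and in particular $\phi(p_1)=\phi(q_1)$. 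Thus the $G_u$-invariant equivalence relation ``$\phi(w)=\phi(w')$'' on $D^+(u)$ is non-trivial, so by primitivity it is universal: $\phi$ is constant on $D^+(u)$, and by vertex-transitivity on $D^+(x)$ for every vertex $x$.

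Next I would propagate this collapse to all forward levels. Concatenating the configuration with its translates under $G$ yields, for every $n$, two directed paths of length $nr$ from $u$ to a common vertex which diverge at $u$ and whose $\phi$-images again agree level by level along one geodesic. I then induct on $k$: assuming $\phi$ is constant on $D^{j}(x)$ for all $x$ and all $j\le k$, the relation on $D^+(u)$ defined by $w\sim w'$ if and only if $\phi(D^{k}(w))=\phi(D^{k}(w'))$ is a $G_u$-invariant equivalence; it is non-trivial because the two divergent out-neighbours of a sufficiently long configuration are $\sim$-related (both $D^{k}$-images collapse to the same geodesic point), hence universal by primitivity, giving constancy on $D^{k+1}(u)=\bigcup_{w\in D^+(u)}D^{k}(w)$. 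I conclude that $\phi$ is constant on $D^{k}(u)$ for every $k\ge 0$.

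The hard part will be converting this into a contradiction with the hypothesis of more than two ends. I want to invoke Lemma~\ref{just2ends}, but that lemma requires $\phi$ to be constant on $D^{r}(u)$ for \emph{every} $r\in\mathbb{Z}$, including negative values, and forward constancy alone does not suffice, since an oriented structure tree may still branch in the ancestor direction. The natural route is to establish the dual statement, namely constancy on each $D^{-k}(u)$: the same configuration read from its terminal vertex $v$ furnishes distinct in-neighbours $p_{r-1}\neq q_{r-1}$ with $\phi(p_{r-1})=\phi(q_{r-1})$, giving a non-trivial $G_v$-invariant relation on $D^-(v)$, and one would like to run the mirror image of the previous two paragraphs. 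The genuine obstacle is that primitivity has been assumed on only one of $D^+(u)$, $D^-(u)$, so the mirror argument is not immediately available; reconciling the one-sided primitivity hypothesis with the need to control both directions — equivalently, showing that the oriented structure tree is a line rather than a genuinely branching tree — is where I expect the finer separation properties of Theorem~\ref{DunwoodysThm} to be essential. Once $\phi$ is shown constant on all $D^{r}(u)$, Lemma~\ref{just2ends} forces $D$ to have exactly two ends, contradicting the assumption and completing the proof.
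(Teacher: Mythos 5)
There is a genuine gap, and you have located it yourself: your final paragraph is a hope rather than an argument. Everything up to and including the forward-constancy propagation is sound (your geodesic-alignment step -- two directed paths of equal length between the same endpoints have $\phi$-images agreeing level by level along the geodesic in $T$ -- is exactly the mechanism the paper also uses, via Lemma~\ref{biglemma}(ii)), but what you have actually proved is only: \emph{if} $\desc(u)$ contains a cycle, then $\phi$ is constant on $D^{k}(x)$ for all $k\geq 0$ and all $x$. To contradict the ends hypothesis via Lemma~\ref{just2ends} you would need constancy on $D^{r}(u)$ for negative $r$ as well, and with primitivity assumed only on $D^{+}(u)$ nothing delivers this: forward constancy is perfectly compatible with more than two ends and with genuine backward branching of the $\phi$-images (consider a digraph tree with out-degree $1$ and in-degree $m\geq 2$, where every $D^{k}(u)$, $k\geq 0$, is a single vertex). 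Nor does the paper possess a hidden patch via ``finer separation properties'' of Theorem~\ref{DunwoodysThm}; its proof never needs to control both directions at once, because it assembles the same ingredients in the opposite order.

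Concretely, the paper runs the primitivity dichotomy on the other branch. It invokes the ends hypothesis \emph{first}: by Lemma~\ref{just2ends} there is a minimal $r\geq 1$ with $\phi$ non-constant on $D^{r}(u)$ (if non-constancy occurs only at negative levels one works dually with $\anc(u)$ and $D^{-}(u)$ -- this is where the parenthetical ``or on $D^{-}(u)$'' in the hypothesis is consumed, so the $D^{+}$/$D^{-}$ mismatch that blocks you never has to be confronted). By minimality $\phi$ is constant on each $D^{r-1}(v)$, so $v\sim w \Leftrightarrow \phi(D^{r-1}(v))=\phi(D^{r-1}(w))$ is a $G_u$-invariant equivalence relation on $D^{+}(u)$; it \emph{cannot} be universal, precisely because $\phi$ is non-constant on $D^{r}(u)=\bigcup_{v\in D^{+}(u)}D^{r-1}(v)$, so primitivity forces singleton classes, i.e.\ $D^{r-1}(v)\cap D^{r-1}(w)=\varnothing$ for distinct $v,w\in D^{+}(u)$. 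Your hypothetical cycle -- two directed paths from $u$ to $v$ diverging at $x_1\neq y_1$ -- then dies at once: if the common length is at least $r$, your own geodesic argument puts $\phi(x_r)$ and $\phi(y_r)$ at the same point of $P(\phi(u),\phi(v))$, contradicting the distinctness of the singleton images of $D^{r-1}(x_1)$ and $D^{r-1}(y_1)$; if the common length is less than $r$, any sufficiently long directed extension beyond $v$ produces a vertex lying in $D^{r-1}(x_1)\cap D^{r-1}(y_1)$, contradicting disjointness. So the fix is not a new tool but a reordering: let the non-constancy supplied by the ends hypothesis, rather than the putative cycle, decide the primitivity dichotomy; then the relation is trivial instead of universal, no constancy propagation is needed, and Lemma~\ref{decandanc} finishes as in your first paragraph.
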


\begin{proof}   Let $e_0$ be a D-cut,  $E=Ge_0\cup Ge_0^*$ the
  associated tree set and $T=T(E)$ the structure tree.  

Since $D$ has more than two ends by Lemma~\ref{just2ends} we can
assume without loss of generality that there exists $r \geq 1$ such
that there are $a,b \in D^r(u)$ with $\phi(a) \neq \phi(b)$ (the other
possibility, with $r \leq -1$ is treated in the same way working with
$\mathrm{anc}(u)$ instead). Let $r \geq 1$ be the smallest integer
such that this is the case.    

\smallskip

\noindent {\bf Claim.} For distinct vertices $v$ and $w$ in $D^+(u)$ we have
\[
 D^{r-1}(v)  \cap  D^{r-1}(w)  = \varnothing.
\]

\begin{proof}[Proof of Claim]
By minimality of $r$ we see that if 
 $v$ is a vertex in $D$ then $\phi$ is constant on $D^{r-1}(v)$.   We
define an equivalence relation on $D^+(u)$ by saying that
vertices $v$ and $w$ in $D^+(u)$ are equivalent if and only if $\phi$
maps $D^{r-1}(v)$ and $D^{r-1}(w)$ to the same vertex in $T$.  This
equivalence relation is clearly  $G_u$ invariant.  Because $G_u$ acts
primitively on $D^+(u)$ we see that this equivalence relation either
has just one class or each class has just a single element.  The first
option is impossible by the choice of $r$ so each equivalence class has just a
single element.  If $v$ and $w$ are distinct vertices in $D^+(u)$ and 
$D^{r-1}(v)  \cap  D^{r-1}(w)$ is non-empty then 
$\phi$ would map $D^{r-1}(v)$ and
$D^{r-1}(w)$ to the same vertex in $T$ which is a contradiction.
\end{proof}

By Lemma~\ref{biglemma}(iv) there cannot be any directed cycles in
$D$.  The argument used in Lemma~\ref{decandanc} shows that if there is
a cycle in $\desc(u)$ then there is a cycle made up of two directed
paths $(u, x_1, \ldots, x_{k-1}, v)$ and $(u, y_1, \ldots, y_{k-1},
v)$ such that $x_i\neq y_i$ for $1=1,\ldots, k-1$.  We show that there
can not be any such cycle in $D$. 
  
There are two cases to consider. First suppose that $k+1 > r$.  
Then
$r \leq k$ and $x_r \in D^{r-1}(x_1)$ and $y_r \in D^{r-1}(y_1)$. We saw in the proof
of the claim above 
that the set of values $\phi$ takes on
$D^{r-1}(x_1)$ is disjoint from the set of values $\phi$ takes on
$D^{r-1}(y_1)$
and therefore $\phi(x_r) \neq \phi(y_r)$. By
Lemma~\ref{biglemma}\eqref{distances}, 
$d_T(\phi(u),\phi(v)) = (k+1)N$
and $d_T(\phi(u),\phi(x_r)) = rN = d_T(\phi(u),\phi(y_r))$. Since
$\phi(x_r)$ and $\phi(y_r)$ both lie on the path $P(\phi(u),\phi(v))$
in $T$ it follows that $\phi(x_r) = \phi(y_r)$ which is a
contradiction. 

Now we suppose that $k+1\leq r$.  Then $D^k(x_1)$ and $D^k(y_1)$ have
a common element $v$ and if $w\in D^{r-k}(u)$ then $w$ is in both
$D^{r-1}(x_1)$ and $D^{r-1}(y_1)$ contrary to the claim above.    

Thus it is impossible to find a cycle in $\desc(u)$ and $\desc(u)$ is a tree. Then, by Lemma~\ref{decandanc}, $\anc(u)$ is also a tree.   
\end{proof}

\begin{corol}\label{2trans}
Let $D$ be a $2$-arc-transitive locally-finite digraph with more than
two ends and $G=\Aut(D)$.  Let $u \in VD$ and assume that $G_u$ acts
doubly transitively  on 
$D^+(u)$ and $D^-(u)$.   Then $\desc(u)$ and $\anc(u)$ are trees.
\end{corol}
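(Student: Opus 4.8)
The plan is to deduce this immediately from Theorem~\ref{desctree}. The only substantive hypothesis of that theorem, beyond the standing assumptions already in force (recall that by convention $D$ is connected, and here it has more than two ends and so is infinite), is that $G_u$ acts \emph{primitively} on $D^+(u)$ or on $D^-(u)$. The corollary assumes the stronger condition that $G_u$ acts \emph{doubly transitively} on $D^+(u)$ and on $D^-(u)$, so the entire argument reduces to the standard permutation-group fact that every doubly transitive action is primitive. Once this is established, primitivity of $G_u$ on $D^+(u)$ follows, the hypotheses of Theorem~\ref{desctree} are met, and that theorem delivers the conclusion that $\desc(u)$ and $\anc(u)$ are trees.

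To justify that double transitivity implies primitivity I would argue by contradiction, using the notion of primitivity as defined above (no non-trivial $G_u$-invariant equivalence relation on the set). Write $X = D^+(u)$ and suppose $\sim$ is a $G_u$-invariant equivalence relation on $X$ admitting a block $B$ with $1 < |B| < |X|$. Choose distinct $a, b \in B$ and a point $c \in X \setminus B$. By double transitivity there is $g \in G_u$ with $g(a) = a$ and $g(b) = c$. Then $gB$ is again a block of $\sim$; it meets $B$ since both contain $a$, yet $gB \neq B$ since $c \in gB$ but $c \notin B$. This contradicts the fact that two blocks of a $G_u$-invariant equivalence relation are either equal or disjoint. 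Hence no such block exists, so the only $G_u$-invariant equivalence relations on $X$ are the two trivial ones, and the action is primitive.

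I expect essentially no obstacle here, since the corollary is a formal consequence of Theorem~\ref{desctree} together with a textbook fact. The only points worth checking are the degenerate cases for the size of the neighbour set. If $|D^+(u)| \le 1$, then the only equivalence relation on $D^+(u)$ is trivial, so the action is primitive vacuously; and if $|D^+(u)| \ge 2$, the block argument of the previous paragraph applies. In either case $G_u$ acts primitively on $D^+(u)$, so Theorem~\ref{desctree} applies and we conclude that $\desc(u)$ and $\anc(u)$ are trees.
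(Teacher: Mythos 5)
Your proposal is correct and matches the paper exactly: the corollary is stated there without proof, precisely because it is an immediate consequence of Theorem~\ref{desctree} via the standard fact that a doubly transitive action is primitive (with primitivity as the paper defines it, via invariant equivalence relations). Your block-stabiliser argument for that fact, and your handling of the degenerate case $|D^+(u)| \le 1$, are both sound.
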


\begin{thm} \label{notuniversal2}
Let $D$ be a $2$-arc-transitive locally-finite digraph with more than
one end, let $G=\Aut(D)$ and $v \in VD$. If $G_v$ acts primitively 
both on $D^+(v)$ and $D^-(v)$ then the reachability relation
$\mathcal{A}$ is not universal and hence $\Delta(D)$ is bipartite.
\end{thm}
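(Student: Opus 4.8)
The plan is to reduce the statement to a purely combinatorial assertion about alternating walks and then treat the two- and many-ended cases separately. By Proposition~\ref{ReachabilityBipartite} it suffices to prove that $\mathcal{A}$ is \emph{not} universal, since then $\Delta(D)$ must be bipartite. Recalling that $\Delta(D)$ is bipartite precisely when $D$ contains no closed alternating walk traversing an odd number of arcs, my goal becomes: show every closed alternating walk has even length. The one structural fact I would carry throughout is that, by Lemma~\ref{biglemma}\eqref{distances}, a directed path $(v_0,\ldots,v_k)$ maps under $\phi$ to a \emph{geodesic} of length $kN$ in $T$ (the consecutive distances are each $N$ and sum to the total distance $kN$, so there is no backtracking in the tree). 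I would then split on whether $D$ has exactly two ends or more than two ends.

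In the two-ended case I would build a global level function. By Lemma~\ref{just2ends}, $\phi$ is constant on each $D^r(u)$ and $T$ is a line, which I identify with $\mathbb{Z}$. For a vertex $a$ the set $D^+(a)=D^1(a)$ maps to a single point $\phi(a)\pm N$, and since a directed $2$-path $c\to a\to b$ maps to a geodesic of length $2N$, the images of $D^-(a)$ and $D^+(a)$ lie on opposite sides of $\phi(a)$. Calling $a$ \emph{positive} when $\phi$ sends $D^+(a)$ to $\phi(a)+N$, I would check that an arc $a\to b$ forces $b$ to be positive as well (the in-neighbour $a$ of $b$ lies at $\phi(b)-N$), so positivity is constant along arcs and hence, by connectedness of $\Gamma(D)$, constant on $VD$. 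This yields $\ell\colon VD\to\mathbb{Z}$ with $\ell(b)=\ell(a)+1$ for every arc, i.e.\ property $Z$. Along any alternating walk $x_1\to x_2\leftarrow x_3\to\cdots$ one then has $\ell(x_2)=\ell(x_1)+1=\ell(x_3)+1$, so all sources share one level and all sinks the next; each reachability class is confined between two consecutive levels, and a closed alternating walk must return to its starting level, forcing even length. Hence $\mathcal{A}$ is not universal.

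For more than two ends I would invoke Theorem~\ref{desctree}: here primitivity of $G_v$ on $D^+(v)$ and $D^-(v)$ guarantees that $\desc(u)$ and $\anc(u)$ are trees for every $u$. Assume for contradiction that $\mathcal{A}$ is universal. Then $\Delta(D)=D$ is non-bipartite, so some vertex $v$ occurs in one reachability class both as a source (both walk-neighbours are out-neighbours) and as a sink, producing a closed alternating walk of odd length. The plan is to convert this \emph{odd} closed walk into an honest cycle lying inside a single descendant tree $\desc(w)$ (or ancestor tree $\anc(w)$); since trees are bipartite this is impossible and the contradiction finishes the proof. To control the walk I would use primitivity through the structure map: at a vertex $v$ the relation ``$a$ and $b$ reach $\phi(v)$ along geodesics sharing their final $T$-edge at $\phi(v)$'' is $G_v$-invariant on $D^-(v)$, so by primitivity it is trivial or universal, which pins down the branches of $T$ at $\phi(v)$ into which the in- and out-neighbours fan out, and hence how the alternating walk is allowed to cross $T$.

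The hard part will be exactly this many-ended case. The naive hope — that an odd closed alternating walk sits inside one $\desc(w)$ — fails in general, because two distinct in-neighbours of a vertex $v$ can have \emph{no} common ancestor: a common ancestor would give two directed paths meeting at $v$, i.e.\ a cycle inside a descendant tree, which is already forbidden. So a closed alternating walk genuinely wanders up and down and its sources need not share an ancestor. The crux is therefore to use the primitivity dichotomy together with the geodesic structure of directed paths in $T$ to show that the odd closure is untenable: either the $T$-directions force a consistent ``flow'' through each $\phi(v)$, which would give a property-$Z$-type grading incompatible with an odd closed walk, or they force a coincidence of images producing a common ancestor and hence a forbidden cycle in some $\desc(w)$. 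Making this dichotomy precise, and in particular ruling out the ``fanning-out'' configuration where in-neighbours use distinct $T$-edges, is where the real work lies.
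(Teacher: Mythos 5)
Your two-ended case is essentially correct: constancy of $\phi$ on each $D^r(u)$ (Lemma~\ref{just2ends}) plus the geodesic property of Lemma~\ref{biglemma}(ii) does yield a level function $\ell$ with $\ell(b)=\ell(a)+1$ along arcs, which confines each $\mathcal{A}$-class to two consecutive levels and rules out universality (the levels are unbounded along directed rays, so no single class can contain all arcs). But note that this case split is itself a departure from the paper, whose argument is uniform in the number of ends, and it throws all the weight onto the many-ended case --- which you do not prove. You say so yourself (``where the real work lies''): at the point where the theorem has content, the proposal is a plan, not a proof, and that is a genuine gap.

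Worse, the envisioned dichotomy would fail as stated. Your second horn --- a coincidence of images yielding a common ancestor and hence a cycle in some $\desc(w)$ --- is unsound: a coincidence of $\phi$-images or of branch points in $T$ does not produce a common ancestor in $D$ (as you yourself observe, two in-neighbours of a vertex may have no common ancestor), so no forbidden cycle materialises, and indeed the paper never uses Theorem~\ref{desctree} in this proof. Your first horn, a property-$Z$-type grading, is also unobtainable in general: by the remark following Corollary~\ref{notuniversal2_corol}, and by the digraphs $M(n,k)$ of Section~5, there are digraphs satisfying all the hypotheses, with infinitely many ends, admitting no homomorphism onto $Z$. The missing idea is the paper's quantitative use of primitivity: for each $0\leq r\leq N$ the map $\phi_r$, sending $x\in D^+(v)$ to the vertex at distance $r$ from $\phi(v)$ on $P(\phi(v),\phi(x))$, has $G_v$-invariant fibres, hence is constant or injective (you apply primitivity only to the crude ``shared final edge'' relation); this defines branch vertices $c_v^{\pm}$ and constants $F_1+F_2=N=B_1+B_2$. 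The correct trichotomy is then: if $F_2=0$ or $B_2=0$, the $\phi$-image of every alternating walk stays within bounded distance of a fixed vertex of $T$; if $F_2>B_1$, the images along an alternating walk drift by $2(s-1)(F_2-B_1)$, so no alternating walk joins $(u,v)$ to $(v,w)$ for a directed path $(u,v,w)$; if $0<F_2\leq B_1$, the branch points stabilise ($c_{x_1}^+=c_{x_2}^+=\cdots$) and the image is again uniformly bounded. In the bounded cases universality is contradicted directly via Lemma~\ref{biglemma}(ii), since arcs lie arbitrarily far apart in $T$. The operative mechanism --- bounded-or-drifting image in the structure tree --- is exactly the third possibility your two horns omit, and it is the one realised in the known examples.
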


\begin{proof}   Let $e_0$ be a D-cut,  $E=Ge_0\cup Ge_0^*$ the
  associated tree set and $T=T(E)$ the structure tree. 

We consider various cases depending on the behavior of the structure
mapping $\phi$. 

By Lemma~\ref{biglemma}(ii), 
for each pair of adjacent vertices in $D$ the distance between their
images in $T$ is always some constant $N$.  Let $r$ be an  integer
$0\leq r\leq N$.  For a vertex $x\in D^+(v)$ we define $\phi_r(x)$ as
the unique 
vertex on the path $P(\phi(v), \phi(x))$ that is in distance $r$ from
$\phi(v)$ in $T$.  The fibers of $\phi_r$ define a $G_v$ invariant
equivalence relation on $D^+(v)$.  Since $G_v$ acts primitively on
$D^+(v)$ we know that either $\phi_r$ is a constant map or $\phi_r$ is
injective. Let $r$ denote the largest integer $0\leq r\leq N$ such
that $\phi_r$ is constant and define $c_v^+$ as the vertex in $T$ that
is in the image of $\phi_r$.   Thus if $x, y$ are distinct vertices in
$D^+(v)$ then the paths in $T$ from $\phi(v)$ both pass through
$c_v^+$ but then part their ways.
Similarly we define $c_v^-$ by
considering $D^-(v)$ instead of $D^+(v)$.

Moreover let $x \in D^+(v)$ and $x' \in D^-(v)$ and define the
following numbers 
\[
\begin{array}{lll}
F_1 = d_T(\phi(v),c_v^+), & \quad & F_2 = d_T(c_v^+,\phi(x)) \\
B_1 = d_T(\phi(v), c_v^-), & \quad & B_2 = d_T(c_v^-, \phi(x')).
\end{array}
\]
From the claim, and by vertex transitivity, it follows that these
numbers depend only on $D$. Note also that $B_1+B_2=N=F_1+F_2$.

By Lemma~\ref{biglemma}\eqref{Seifter}, given any $x \in D^-(v)$ and
$y \in D^+(v)$ the images $\phi(x)$ and $\phi(y)$ must be in different
connected components of $T \setminus \{ \phi(u) \}$.                 

Given $x,y \in D^+(v)$ and $x',y' \in D^-(v)$ with $x \neq y$ and $x'
\neq y'$ the graph induced by $P(\phi(x'),\phi(x)) \cup
P(\phi(y'),\phi(y))$ is depicted below. 

\begin{center}
\resizebox{.55\textwidth}{!}{\includegraphics[angle=270]{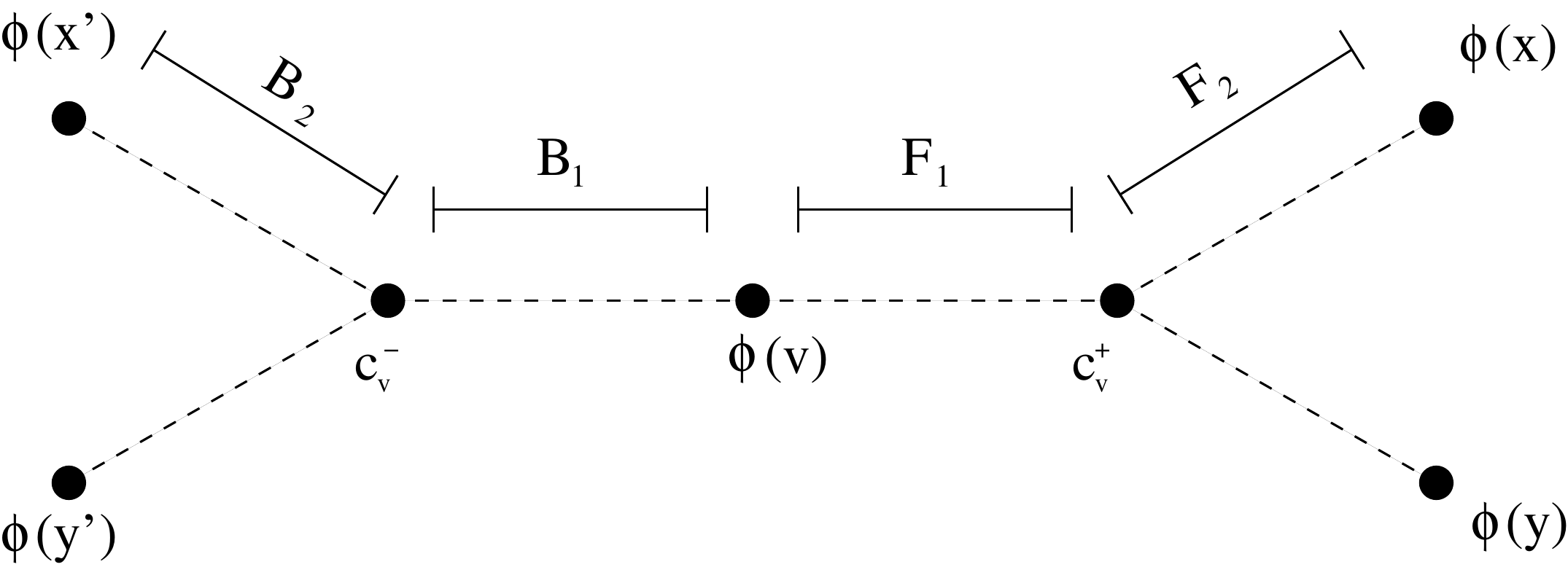}}
\end{center}

Either $F_2=0$, in which case
$\phi(x)=\phi(y)$ for all $x,y \in D^+(v)$, or $F_2 >0$, in which case
$\phi(x) \neq \phi(y)$ for all $x,y \in D^+(v)$ with $x \neq y$. The
corresponding statements for $B_2$ and $D^-(v)$ also hold. 

Suppose that $F_2=0$.  Then $\phi$ is not injective on $D^+(v)$, and it
may or may not be injective on $D^-(v)$. Let $W = (x_1, y_1, x_2, y_2,
\ldots)$ be an alternating walk in $D$. If $x_1 \rightarrow
y_1$ then for all $i$ since $y_i, y_{i+1} \in D^+(x_{i+1})$ we have
$\phi(y_i) = \phi(y_{i+1})$ and thus $\phi(y_i) = \phi(y_1)$. On the other hand, if the
walk $W$ starts with the arc $x_1\leftarrow y_1$ then we find that
$\phi(x_i)=\phi(x_1)$ for all $i$.  In either case it
follows that the image of $W$ under $\phi$ is in a bounded distance from
$\phi(y_1)$ and so $\mathcal{A}$ cannot be universal by
Lemma~\ref{biglemma}\eqref{distances}. Likewise we can deal with the
case $B_2=0$, so from now on we may assume $F_2>0$ and $B_2>0$
(i.e. $\phi$ is injective both on $D^+(v)$ and $D^-(v)$). 

We have two cases to consider, depending on the values of $F_2$ and $B_1$.

\bigskip

\begin{case}
$F_2 > B_1$.
\end{case}

\bigskip

Suppose $(u,v,w)$ is a directed path.  If the reachability relation
was universal then there would be an alternating path $W = (x_1, y_1,
x_2, y_2, \ldots, x_s, y_s)$  or $W = (x_1, y_1,
x_2, y_2, \ldots, x_s)$ such that the first arc traversed would
be $u\rightarrow v$ and the last one would be $v\rightarrow w$. We show
this to be impossible by demonstrating that $\phi(x_1)\neq \phi(x_s)$
and $\phi(x_1)\neq\phi(y_s)$ (for all $s>1$).   

Since $F_2 > B_1$, we know that 
  $c_{x_i}^+$ lies on the path 
$P(\phi(x_i), c_{y_i}^-)$ and $ c_{y_i}^-$ lies on the path
  $P(c_{x_i}^+, \phi(y_i))$.
The path $P(\phi(x_i), \phi(x_{i+1}))$ 
can be split up into three segments, the first one
being the path  between $\phi(x_i)$ and $c_{x_i}^+$ (which has
length $F_1$), then the
path between $c_{x_i}^+$ and $c_{y_i}^-$ (which has length $F_2-B_1$)
and finally the path between $c_{y_i}^-$ and $\phi(x_{i+1})$ (which
  has length $B_2$).      
In general the  paths $P(c_{x_i}^+, c_{y_i}^-)$ and 
$P(c_{y_i}^-, c_{x_{i+1}}^+)$
have only the vertex $c_{y_i}^-$ in
common.  We also note that $P(c_{y_i}^-, c_{x_{i+1}}^+)$ 
and $P(c_{x_{i+1}}^+, c_{y_{i+1}}^-)$ have only the vertex
$c_{x_{i+1}}^+$ in common.  The paths $P(c_{x_i}^+, c_{y_i}^-)$ 
and $P(c_{y_i}^-, c_{x_{i+1}}^+)$ both have
length $F_2-B_1$.   By joining together all the
paths $P(c_{x_i}^+, c_{y_i}^-)$ and 
$P(c_{y_i}^-, c_{x_{i+1}}^+)$ for $i=1$ to $i=s-1$ we get a path $Q$ in
$T$ from $c_{x_1}^+$ to $c_{x_s}^+$ and this path has length
$2(s-1)(F_2-B_1)$.   
Then $P(\phi(x_1), c_{x_1}^+)\cup Q\cup P(c_{x_{s+1}}^+, \phi(x_s))$
is a path in $T$ from $\phi(x_1)$ to $\phi(x_s)$ and has strictly positive
length if $s>1$.  Hence $\phi(x_1)\neq\phi(x_s)$.  That 
$\phi(x_1)\neq\phi(y_s)$ is proved similarly.

\bigskip

\begin{case}
$F_2 \leq B_1$ (and so $B_2 \leq F_1$).
\end{case}

\bigskip

Here we see that both
  $c_{x_1}^+$ and $c_{y_1}^-$ lie on the path 
$P(\phi(x_1), \phi(y_1))$.  Since we
  are assuming that $F_2\leq B_1$ we know that $c_{x_1}^+$ lies
  between $c_{y_1}^-$ and $\phi(y_1)$.  Since $c_{y_1}^-$ is also on
  the path $P(\phi(y_1), \phi(x_2))$ we see that $c_{x_1}^+$ is also contained
  in the path $P(\phi(y_1), \phi(x_2))$.  Both $c_{x_1}^+$ and 
 $c_{x_2}^+$ are in distance $F_2$ from $\phi(y_1)$ and  
both lie on the  path  $P(\phi(y_1), \phi(x_{2}))$.  This implies that
  $c_{x_1}^+=c_{x_2}^+$.  Then of course we get by induction that
$c_{x_1}^+=c_{x_2}^+=\cdots =c_{x_{s}}^+$.  Whence $d(c_{x_1}^+,
  \phi(x_i))=F_1$ and  $d(c_{x_1}^+, \phi(y_i))=F_2$ for every $i$.  From this we
  get a uniform bound on the diameter of the image under $\phi$ of
  every alternating path in $D$.  Hence it is impossible that the
  relation $\mathcal{A}$ is universal.
\end{proof}

\begin{corol} \label{notuniversal2_corol}
Let $D$ be a $2$-arc-transitive locally-finite
digraph with more than one end.  Set $G=\aut(D)$.  If for a vertex $v\in
VD$ the group $G_v$
acts doubly transitively 
both on $D^+(v)$ and $D^-(v)$ then the reachability relation
$\mathcal{A}$ is not universal and hence $\Delta(D)$ is bipartite.
\end{corol}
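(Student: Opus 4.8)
The plan is to derive this corollary directly from Theorem~\ref{notuniversal2}, whose conclusion is identical but whose hypothesis asks only that $G_v$ act \emph{primitively} on $D^+(v)$ and on $D^-(v)$. Thus the entire task reduces to the standard observation that a $2$-transitive permutation action is primitive; once that is in hand, the doubly transitive hypothesis here immediately supplies the primitivity needed to invoke the theorem, and the conclusion about $\mathcal{A}$ and $\Delta(D)$ follows word for word.

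To justify the implication I would recall the usual block argument. Suppose $H$ acts $2$-transitively on a set $\Omega$ with $|\Omega| \geq 2$, and let $B \subseteq \Omega$ be a block of the action with $|B| \geq 2$. If $B \neq \Omega$, choose distinct $\alpha, \beta \in B$ and some $\gamma \in \Omega \setminus B$; by $2$-transitivity there is $h \in H$ with $h\alpha = \alpha$ and $h\beta = \gamma$. Then $\alpha \in hB \cap B$, so $hB = B$ by the defining property of a block, yet $\gamma = h\beta \in hB \setminus B$, a contradiction. Hence the only blocks are the singletons and $\Omega$ itself, which is to say the action is primitive. (When $|\Omega| \leq 1$ primitivity holds vacuously, so nothing need be checked in that degenerate case.)

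Applying this with $\Omega = D^+(v)$ and with $\Omega = D^-(v)$, the assumption that $G_v$ acts doubly transitively on each of these sets yields that $G_v$ acts primitively on each. Theorem~\ref{notuniversal2} then applies directly and gives that the reachability relation $\mathcal{A}$ is not universal; by Proposition~\ref{ReachabilityBipartite} it follows that $\Delta(D)$ is bipartite, as required.

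There is no genuine obstacle here: the corollary is a formal weakening of Theorem~\ref{notuniversal2}, and the sole ingredient is the elementary and well-known fact that double transitivity implies primitivity. The one point worth a line of care is simply recording that standard implication together with its graceful behaviour on sets of size at most one, after which the theorem does all the work.
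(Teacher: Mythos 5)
Your proposal is correct and matches the paper exactly: the paper states this as an immediate corollary of Theorem~\ref{notuniversal2} (giving no separate proof), relying on the standard fact that a doubly transitive action is primitive, which is precisely the block argument you spell out.
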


A digraph $D$ is said to have property $Z$ if there is a digraph homomorphism from $D$ onto the two-way infinite directed line.  The example
given in \cite{Malnic1} shows that the conclusion of the
theorem above cannot be replaced by ``has property $Z$''; see Section~5 below for more on this.  Of course
not every highly arc-transitive digraph satisfies the hypotheses of
the above theorem (counterexamples are constructed easily using the
universal covering construction of \cite{Cameron2}) so
Theorem~\ref{notuniversal2} 
cannot be used to prove in general that $\mathcal{A}$ is not universal for
any locally-finite highly arc-transitive digraph. 

\section{Connected-homogeneous digraphs without triangles}

We now turn our attention to the main subject of this article: the study of connected-homogeneous digraphs. Recall
from the introduction that a digraph $D$ is called \emph{connected-homogeneous} (or simply $C$-homogeneous) if any
isomorphism between finite connected induced subdigraphs of $D$
extends to an automorphism of $D$. We begin here by considering the case where $D$ has more than one end
and is triangle-free, meaning that the underlying graph of $D$ does not embed a
triangle. By applying results from the previous section we shall see that these digraphs form a special subfamily of the infinite highly arc-transitive digraphs. Note that
there are examples of one-ended $C$-homogeneous locally finite
digraphs. For example it may be verified that
the one-ended digraph constructed in \cite[Example~1]{Moller6} is
$C$-homogeneous. 

Before stating the main result of this section we first need a construction which will be used to 
build examples of $C$-homogeneous digraphs. This construction was introduced in \cite{Cameron2} 
where it was used to construct universal covering digraphs for highly arc-transitive-digraphs.

Let $\Delta$ be an edge-transitive, connected bipartite graph with given bipartition $X \cup Y$. Let $u = |X|$ and $v = |Y|$, noting that in general $u$ and $v$ need not be finite. We shall construct a digraph $DL(\Delta)$ that has the property that its reachability graph is isomorphic to $\Delta$. Let $T$ be a directed tree with constant in-valency $u$ and constant out-valency $v$. For each vertex $t \in T$ let $\varphi_t$ be a bijection from $T^{-}(t)$ to $X$, and let $\psi_t$ be a bijection from $T^+(t)$ to $Y$. Then $DL(\Delta)$ is defined to be the digraph with vertex set $ET$ such that for $(a,b), (c,d) \in ET$, $((a,b),(c,d))$ is a directed edge of $DL(\Delta)$ if and only if $b=c$ and $(\psi_b(a),\varphi_b(d))$ is an edge of $\Delta$.  The digraph $DL(\Delta)$ may be thought of as being constructed by taking $T$ and replacing each vertex of $T$ by a copy of $\Delta$. Then for copies of $\Delta$ that are indexed by adjacent vertices $a$ and $b$ of $T$ we identify a single vertex from one of the copies of $\Delta$ with a vertex from the other copy of $\Delta$, with the bijections determining the identifications. Since $\Delta$ is edge transitive it follows that different choices of bijections $\varphi_y$ and $\psi_y$ for $y \in VT$ will lead to isomorphic digraphs, and $DL(\Delta)$ is used to denote this digraph. 

Note that without the assumption that $\Delta$ is edge transitive, different choices of $\varphi_y$ and $\psi_y$ for $y \in VT$ can lead to non-isomorphic digraphs. For instance, suppose that $\Delta$ is the bipartite graph where $X = \{ x,x'\}$, $Y = \{y,y'\}$ and the edges are $(x,y)$, $(x,y')$ and $(x',y')$. Then $\Delta$ is clearly not edge transitive. Now the functions $\varphi_y$ and $\psi_y$ for $y \in VT$ can be defined in such a way that every vertex in the digraph $DL(\Delta)$ arising from the above construction will have an even number of directed edges adjacent to it 
(i.e. each vertex either has in- and out-degree $2$, or in- and out-degree $1$). 
But different choices of functions $\varphi_y$ and $\psi_y$ for $y \in VT$ can give rise to vertices in $DL(\Delta)$ with exactly three edges adjacent to them, and hence to a different digraph. 

It is immediate from the definition of $DL(\Delta)$ that the reachability graph of $DL(\Delta)$ is $\Delta$. In other words, we have $\Delta(DL(\Delta)) = \Delta$.  

\begin{thm}\label{bigtheorem}
Let $D$ be a connected locally-finite triangle-free digraph with more
than one end. If $D$ is $C$-homogeneous then
\begin{enumerate}[(i)]  
\item $D$ is highly arc-transitive, 
\item $\mathcal{A}$ is not universal and hence $\Delta(D)$ is bipartite,
\item in particular $\Delta(D)$ is isomorphic to one of: a cycle
  $C_m$ ($m$ even), a complete bipartite graph $K_{m,n}$ ($m,n \in
  \mathbb{N}$), complement of a perfect matching $CP_n$ ($n \in
  \mathbb{N}$), or an infinite
  semiregular tree $T_{a,b}$ ($a,b \in \mathbb{N}$),
\item conversely for every bipartite graph $B$ listed in (iii), the digraph $DL(B)$ is a connected triangle-free locally-finite C-homogeneous digraph with $\Delta(DL(B))$ isomorphic to $B$.   
\end{enumerate}
In addition, if $D$ has more than two ends then
\begin{enumerate}[(i)]
\item[(v)] $\mathrm{desc}(u)$ (and
  $\anc(u)$) is a tree,
  for all $u \in VD$.    
\end{enumerate}
\end{thm}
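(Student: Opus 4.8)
The plan is to prove the five parts in a natural order, leveraging the machinery of Section~3 and the strong symmetry forced by $C$-homogeneity. First I would establish part~(i), that $D$ is highly arc-transitive. The key observation is that because $D$ is triangle-free, any directed $k$-arc $(x_0,\ldots,x_k)$ induces a subdigraph containing \emph{only} the consecutive arcs $(x_i,x_{i+1})$: a chord between non-consecutive vertices would have to be an arc, and I would argue (using irreflexivity, antisymmetry, and triangle-freeness) that any such chord produces a triangle or a short directed cycle, both forbidden. Hence every directed $k$-arc is a connected induced subdigraph isomorphic to a directed path $P_k$, and $C$-homogeneity immediately extends any isomorphism between two such paths to an automorphism. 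This gives $k$-arc-transitivity for every $k$, i.e. high arc-transitivity. A mild subtlety: I must confirm that a directed $k$-arc really is an \emph{induced} subdigraph with no extra arcs, which is exactly where triangle-freeness (together with the no-short-directed-cycle consequence of Lemma~\ref{biglemma}(iv)) is used.

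Next, for part~(ii), having established high arc-transitivity I would verify the primitivity hypothesis needed to apply Theorem~\ref{notuniversal2}. Since $D$ is $C$-homogeneous and $2$-arc-transitive, the stabiliser $G_v$ acts on $D^+(v)$ and $D^-(v)$; I expect that $C$-homogeneity forces these actions to be highly transitive (in fact $G_v$ acts as the full symmetric group on $D^+(v)$, because any bijection of out-neighbours extends to an isomorphism of the corresponding star, which is connected and induced by triangle-freeness). The full symmetric group action is certainly primitive, so Theorem~\ref{notuniversal2} applies directly to conclude that $\mathcal{A}$ is not universal, and then Proposition~\ref{ReachabilityBipartite} gives that $\Delta(D)$ is bipartite. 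Part~(v), under the stronger hypothesis of more than two ends, follows the same way by feeding this primitivity into Theorem~\ref{desctree} to conclude that $\desc(u)$ and $\anc(u)$ are trees.

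The real work is part~(iii), the classification of the possible reachability graphs $\Delta(D)$. Here I would use that $\Delta(D)$ is itself a finite or infinite connected bipartite graph that is highly symmetric: by Proposition~\ref{ReachabilityBipartite} it is $1$-arc-transitive, and I expect the $C$-homogeneity of $D$ to descend to a $C$-homogeneity (indeed homogeneity) property of the bipartite graph $\Delta(D)$, since connected induced subgraphs of $\Delta(D)$ correspond to connected induced subdigraphs of $D$ sitting inside a single reachability class. The strategy is then to classify locally-finite $C$-homogeneous connected bipartite graphs directly. I would analyse the structure around a single vertex $x$ in part $X$: its neighbourhood lies in $Y$, and the pattern of common neighbours of pairs of $Y$-vertices is tightly constrained by homogeneity. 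A natural invariant is whether two vertices in the same part at distance~$2$ have a unique common neighbour, all-but-one, or a variable number; these three regimes should separate out the complete bipartite graphs $K_{m,n}$, the complements of perfect matchings $CP_n$, and the cycles $C_m$ (with the tree $T_{a,b}$ arising as the infinite, locally-tree-like degenerate case). The main obstacle I anticipate is precisely this bipartite classification: ruling out all other locally-finite $C$-homogeneous bipartite graphs is a genuine combinatorial case analysis rather than a formal deduction, and the paper signals as much by devoting ``the majority of this section'' to it.

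Finally, part~(iv) is a converse verification: for each listed bipartite graph $B$ one checks that $DL(B)$ is connected, locally-finite, triangle-free, and $C$-homogeneous with $\Delta(DL(B))\cong B$. The isomorphism $\Delta(DL(B))\cong B$ is immediate from the construction (as noted just before the theorem), and triangle-freeness of $DL(B)$ follows from the bipartiteness of each $B$ together with the tree-like gluing along $T$. The point requiring care is $C$-homogeneity of $DL(B)$: I would argue that any finite connected induced subdigraph of $DL(B)$ lives in a bounded portion of the tree $T$, and then combine the edge-transitivity of $B$ (which makes the choice of gluing bijections irrelevant) with the tree automorphisms of $T$ to extend isomorphisms between such pieces to automorphisms of the whole digraph. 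This last extension argument is routine but is where one must be careful that local isomorphisms respect the reachability structure consistently across adjacent copies of $B$.
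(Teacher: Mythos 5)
Your plan is essentially the paper's own proof. For (i) the paper argues exactly as you do: triangle-freeness makes $2$-arcs induced (hence $D$ is $2$-arc-transitive and $G_v$ is doubly transitive on $D^\pm(v)$), and then Lemma~\ref{biglemma}(iv) shows every $k$-arc is an induced directed path, so $C$-homogeneity yields high arc-transitivity. One small correction to your chord analysis: a forward chord $v_i\rightarrow v_j$ with $j-i\geq 3$ produces neither a triangle nor a short directed cycle; the contradiction is with the other half of Lemma~\ref{biglemma}(iv), namely that all directed paths between two vertices have the same length. Also note the logical ordering: you must record $2$-arc-transitivity \emph{before} invoking Lemma~\ref{biglemma}, since that lemma presupposes it. For (ii) and (v) the paper feeds double transitivity into Corollaries~\ref{notuniversal2_corol} and~\ref{2trans}; your symmetric-group/primitivity route through Theorems~\ref{notuniversal2} and~\ref{desctree} is the same argument (and your observation that $G_v$ induces the full symmetric group on $D^+(v)$ is correct, since the out-star is connected and induced). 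For (iii) your reduction is precisely the paper's: $C$-homogeneity descends to $\Delta(D)$ (Lemma~\ref{easyone}), and then one classifies locally-finite $C$-homogeneous bipartite graphs (Theorem~\ref{chomogeneousbipartite}); the paper's local analysis runs through Lemma~\ref{embedsasquare} ($C_4$ embeds unless $\Gamma$ is a cycle or tree) and Lemma~\ref{neighbourhoods}, which identifies $\lb (\Gamma(x)\setminus\{y\})\cup(\Gamma(y)\setminus\{x\})\rb$ as a finite \emph{homogeneous} bipartite graph and quotes the Goldstern--Grossberg--Kojman list, rather than your distance-two common-neighbour invariant, though the resulting case split (complete bipartite, complement of a matching, matching) matches your three regimes. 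You leave this case analysis unexecuted, as you concede, so as a proof the proposal is incomplete there, but the route is the right one.

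Two genuine inaccuracies are worth flagging. First, your parenthetical claim that $\Delta(D)$ is \emph{homogeneous} as a bipartite graph is false in general: $C_m$ for $m\geq 8$ and the trees $T_{a,b}$ are $C$-homogeneous but not homogeneous bipartite graphs (two vertices in the same part at different even distances induce isomorphic finite substructures lying in different orbits). This is harmless for your argument, which only uses $C$-homogeneity. Second, and more substantively, in part (iv) edge-transitivity of $B$ is not the property that drives the extension argument: edge-transitivity only makes the construction $DL(B)$ well-defined (independent of the gluing bijections). What the block-by-block extension along the tree of blocks actually requires is $C$-homogeneity of the bipartite graph $B$ itself --- Theorem~\ref{CPWalwaysworks} shows $DL(\Delta)$ is $C$-homogeneous \emph{if and only if} $\Delta$ is a $C$-homogeneous bipartite graph, and there are edge-transitive bipartite graphs for which $DL$ fails to be $C$-homogeneous. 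Since the four graphs listed in (iii) are $C$-homogeneous, your plan is repairable by invoking that instead; but the extension is also less routine than you suggest: one must verify well-definedness at the cut vertices shared by adjacent blocks, and treat separately the blocks that meet the domain $D_1$ (where the extension must agree with $\varphi$) from those that do not, exactly as the paper's inductive construction does.
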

\begin{proof}
For part (i), observe that from $C$-homogeneity and the absence of triangles 
it follows that $D$ is $2$-arc
transitive and that, with $G = \Aut(D)$, $G_v$ acts doubly transitively on $D^+(v)$ and
$D^-(v)$ for any vertex $v$. Applying Lemma~\ref{biglemma}\eqref{nocycles} we see that the subdigraph induced by a $k$-arc $(v_0, \ldots, v_k)$ only
contains the arcs $(v_i, v_{i+1})$. It then follows from
$C$-homogeneity that given any other $k$-arc $(w_0, \ldots, w_k)$
there is an automorphism $\alpha$ with $\alpha(v_i) = w_i$ for $0 \leq
i \leq k$. Thus $D$ is $k$-arc-transitive for all $k$, and therefore
highly arc-transitive. Part (v) follows from Corollary~\ref{2trans} 
and (ii) follows from Corollary~\ref{notuniversal2_corol}. 

Part (iii) follows from Theorem~\ref{chomogeneousbipartite} below. 

For part (iv), the facts that $DL(B)$ is connected, triangle-free, locally-finite and that $\Delta(DL(B)) \cong B$ are all immediate from the definition of $DL(B)$. That $DL(B)$ is $C$-homogeneous will be proved in Theorem~\ref{CPWalwaysworks} below. \end{proof}

In the next section we give some additional examples, showing that the construction $DL(B)$ on its own does not exhaust all examples of infinitely ended triangle-tree locally-finite $C$-homogeneous digraphs. In Section~6 we shall see that Theorem~\ref{bigtheorem}(v) does not hold in the $2$-ended case. The rest of this section will be devoted to proving Theorems~\ref{chomogeneousbipartite} and \ref{CPWalwaysworks} which will establish parts (iii) and (iv) of Theorem~\ref{bigtheorem}. 

We begin by quickly dealing with the situation where the out-degree (or dually the in-degree) of $D$ is equal to $1$. Let $D$ be a connected locally-finite triangle-free $C$-homogeneous digraph with more than one end. If $D$ has out-degree equal to $1$ then the only cycles that $D$ can contain are directed cycles. But by Lemma~\ref{biglemma}(iv), $D$ does not have any directed cycles. It follows that $D$ is a directed tree. 
Dually if $D$ had in-degree $1$ then $D$ would be a tree. 
Therefore, since $C$-homogeneity implies vertex transitivity, if the indegree of $D$ equals $1$ then 
$D$ is isomorphic to the unique directed tree where every vertex has indegree $1$ and outdegree $k$ for some fixed $k \in \mathbb{N}$. There is an obvious dual statement if the outdegree of $D$ is $1$. Hence, since our interest is in classification, from now on we may, in the triangle-free case, 
suppose that 
$D$ has in-degree at least $2$ and out-degree at least $2$. 

We know from Theorem~\ref{bigtheorem}(ii) that $\Delta(D)$ is a
bipartite graph. Now we want to determine all the possibilities for
$\Delta(D)$. To do this we shall show that the bipartite graph $\Delta(D)$ inherits $C$-homogeneity from $D$, so that $\Delta(D)$ is a locally-finite $C$-homogeneous bipartite graph.

\begin{defn}\label{hom_bipartite}
Let $\Gamma$ be a bipartite graph with given bipartition $X \cup Y$. By an \emph{automorphism of the bipartite graph} $\Gamma = X \cup Y$ we mean a bijection $\varphi : \Gamma \rightarrow \Gamma$ that sends edges to edges, non-edges to non-edges, and preserves the bipartition (i.e. $\varphi(X) = X$ and $\varphi(Y) = Y$). We say that $\Gamma$ is a \emph{homogeneous bipartite graph} if every 
isomorphism $\theta : \lb X' \cup Y' \rb \rightarrow \lb X'' \cup Y'' \rb$ (where $X'$, $X''$ are subsets of $X$, and $Y'$, $Y''$ are subsets of $Y$) between finite induced subgraphs of $\Gamma$ that preserves the bipartition (i.e. $\theta(X') = X''$ and $\theta(Y')=Y''$) extends to an automorphism of $\Gamma$. We say that $\Gamma$ is a  \emph{$C$-homogeneous bipartite graph} if any isomorphism between finite induced connected subgraphs which preserves the bipartition extends to an automorphism of the bipartite graph $\Gamma$.
\end{defn}

The countable homogeneous bipartite graphs were classified in \cite{Goldstern1}. We shall apply this result below in Lemma~\ref{neighbourhoods} where we list the finite homogeneous bipartite graphs. Note that there is a difference between homogeneous bipartite graphs and homogeneous graphs that are bipartite (for example, $K_{2,3}$ is a homogeneous bipartite graph, but as a graph it is not homogeneous since it is not even vertex transitive).  

\begin{lem}\label{easyone}
Let $D$ be a triangle-free locally-finite infinite connected $C$-homogeneous digraph with more than one end. Then the underlying undirected graph of $\Delta(D)$ is a locally-finite $C$-homogeneous bipartite graph.
\end{lem}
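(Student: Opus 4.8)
The plan is to realise the bipartite graph $\Delta(D)$ as a sufficiently rigid substructure of $D$ and then pull automorphisms of $D$ back to it. Write $\Delta = \lb \mathcal{A}(e) \rb$, and recall from Theorem~\ref{bigtheorem}(ii) (via Proposition~\ref{ReachabilityBipartite}) that $\Delta$ is connected and bipartite; let $X \cup Y$ be its bipartition, where $X$ consists of the vertices occurring as tails of arcs in $\mathcal{A}(e)$ and $Y$ of those occurring as heads, so every arc of $\Delta$ runs from $X$ to $Y$. Since $\Delta$ sits on a vertex subset of the locally-finite digraph $D$ it is locally-finite, and since $\mathcal{A}(e)$ contains an arc both $X$ and $Y$ are non-empty. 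A first easy observation, using that $\mathcal{A}$ is a reachability relation, is that $D$ has no arc with both endpoints in $X$ or both in $Y$: an arc $x \to x'$ with $x,x' \in X$ would be reachable from any out-arc $x\to y$ of $x$ lying in $\mathcal{A}(e)$ (via the alternating walk $x' \leftarrow x \to y$), forcing $x' \in X \cap Y$ and contradicting bipartiteness; dually for $Y$. Likewise every arc of $D$ directed from a vertex of $X$ to a vertex of $Y$ already lies in $\mathcal{A}(e)$. Thus the task reduces to showing that $\Delta$ is an \emph{induced} subdigraph of $D$.

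The only way this could fail is through a ``backward'' arc $y \to x$ with $y \in Y$ and $x \in X$, and ruling these out is the crux of the argument. I would argue by contradiction: suppose such an arc exists. Since $y$ is a head of some arc of $\mathcal{A}(e)$, pick $x_0 \in X$ with $x_0 \to y$ in $\Delta$; since $x$ is a tail, pick a $\Delta$-out-neighbour $y'$ of $x$; and since $D$ is vertex-transitive every vertex has positive out-degree, so $y'$ has some out-neighbour $z$ in $D$. Now $(x_0, y, x)$ and $(x, y', z)$ are $2$-arcs, so by $2$-arc-transitivity there is $\gamma \in \Aut(D)$ with $\gamma(x_0,y,x) = (x,y',z)$. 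As $\gamma$ carries the $\Delta$-arc $(x_0,y)$ to the $\Delta$-arc $(x,y')$, it maps the reachability class $\mathcal{A}(e)$ to itself, hence stabilises the vertex set of $\Delta$ together with its bipartition; consequently $z = \gamma(x) \in X$, and $\gamma$ sends the backward arc $(y,x)$ to a backward arc $(y',z)$. This produces a directed path $y \to x \to y' \to z$ lying entirely in $\Delta$, and iterating the construction yields an infinite directed path inside $\Delta$. But by Lemma~\ref{biglemma}(ii) the images under $\phi$ of the vertices of a directed path grow arbitrarily far apart in $T$, whereas the proof of Theorem~\ref{notuniversal2} shows that $\phi$ maps every alternating walk---and hence all of $\Delta$, each of whose vertices lies on an alternating walk from $e$---into a region of $T$ of bounded diameter. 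This contradiction shows no backward arc exists, so $\Delta$ is an induced subdigraph of $D$.

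With induced-ness in hand the transfer of homogeneity is routine. A finite connected induced subgraph of the bipartite graph $\Delta$ is the same as a finite connected induced subdigraph of $D$ whose vertices lie in $\Delta$, and because all arcs of $\Delta$ run from $X$ to $Y$, a bipartition-preserving graph isomorphism $\theta$ between two such subgraphs is automatically a digraph isomorphism of the corresponding induced subdigraphs of $D$. By $C$-homogeneity of $D$ it extends to some $\alpha \in \Aut(D)$. Since $\theta$, and hence $\alpha$, carries at least one arc of $\mathcal{A}(e)$ to another arc of $\mathcal{A}(e)$, and reachability classes are permuted by $\Aut(D)$, the automorphism $\alpha$ stabilises $\mathcal{A}(e)$ and therefore restricts to an automorphism of $\Delta$; as it preserves arc directions it preserves the bipartition, so it is an automorphism of the bipartite graph $\Delta$ extending $\theta$. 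The single remaining case, where the subgraph is one vertex, is handled by first embedding that vertex into an arc of $\Delta$ and invoking $1$-arc-transitivity. The genuine obstacle is the backward-arc step: without knowing $\Delta$ is induced one cannot legitimately regard a bipartite-graph isomorphism as an isomorphism of induced subdigraphs of $D$, and everything else is bookkeeping.
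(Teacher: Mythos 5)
Your reduction is sound as far as it goes: the observations that an arc inside $X$ or inside $Y$ would be $\mathcal{A}$-related (via a common-tail alternating walk) to an arc of $\mathcal{A}(e)$ and so contradict bipartiteness, that every $X\to Y$ arc already lies in $\mathcal{A}(e)$, and that the whole question therefore reduces to excluding backward arcs $y\to x$, are all correct; so is the iteration via $2$-arc-transitivity, which legitimately produces arbitrarily long directed paths inside $V\Delta$ (each automorphism used carries an arc of $\mathcal{A}(e)$ to an arc of $\mathcal{A}(e)$, hence stabilises $\lb\mathcal{A}(e)\rb$ and its bipartition). Your final ``routine'' paragraph is in fact essentially the paper's entire proof: the paper fixes $\lb\mathcal{A}(e)\rb$, extends a bipartition-preserving isomorphism between finite connected induced substructures by $C$-homogeneity of $D$, notes that the extension preserves $\mathcal{A}$-classes and so fixes $\lb\mathcal{A}(e)\rb$ setwise, and restricts. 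The paper silently identifies induced subgraphs of $\lb\mathcal{A}(e)\rb$ with induced subdigraphs of $D$, so the induced-ness issue you isolate is a genuine point of care that the paper does not address explicitly.

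The gap is in how you dispose of backward arcs. The assertion that ``the proof of Theorem~\ref{notuniversal2} shows that $\phi$ maps every alternating walk --- and hence all of $\Delta$ --- into a region of $T$ of bounded diameter'' is not what that proof establishes. A uniform bound is obtained there only when $F_2=0$ (or dually $B_2=0$), and in Case~2, i.e.\ when $F_2\leq B_1$. In Case~1, where $F_2>B_1$, non-universality is proved in the opposite way: the argument constructs a path in $T$ from $\phi(x_1)$ to $\phi(x_s)$ of length $2F_1+2(s-1)(F_2-B_1)$, so the $\phi$-images of the vertices of an alternating path drift apart linearly with $s$, and $\phi(V\Delta)$ has \emph{unbounded} diameter. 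This regime genuinely occurs among the digraphs under consideration --- for instance when $D$ is a semiregular directed tree, so that $\Delta(D)\cong T_{a,b}$ is infinite --- and nothing in your argument shows that the presence of a backward arc forces $F_2\leq B_1$ or $F_2=0$. Consequently, in the case $F_2>B_1$ your arbitrarily long directed path inside $V\Delta$ produces no contradiction with Lemma~\ref{biglemma}(ii), and the backward-arc exclusion, which you yourself identify as the crux, is incomplete. To close it you would need either a separate argument in that case (the bounded-diameter contradiction cannot work there) or a proof that a backward arc is incompatible with $F_2>B_1$.
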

\begin{proof}
By Theorem~\ref{notuniversal2}, $\Delta(D)$  is bipartite, and it clearly inherits local-finiteness from $D$. To see that $\Delta(D)$ is $C$-homogeneous, fix a copy $\lb \mathcal{A}(e) \rb$ $(e \in ED)$ of $\Delta(D)$ in $D$. Clearly $\lb \mathcal{A}(e) \rb$ contains at least one arc, namely the arc $e \in ED$. Since $D$ is $C$-homogeneous any isomorphism between finite connected induced substructures of $\lb \mathcal{A}(e) \rb$ which preserves the bipartition extends to an automorphism of $D$. This automorphism of $D$ preserves the $\mathcal{A}$ equivalence classes of edges given by the reachability relation, hence it fixes $\lb \mathcal{A}(e) \rb$ setwise. Thus by restricting to $\lb \mathcal{A}(e) \rb$ we get an automorphism of this bipartite graph  which extends the original isomorphism between connected subgraphs.
\end{proof}

This reduces the problem of determining the possibilities of the reachability graph $\Delta(D)$ to the 
problem of classifying the 
locally-finite $C$-homogeneous bipartite graphs.

\subsection*{\boldmath Classifying the locally-finite $C$-homogeneous bipartite graphs}

The finite and locally-finite $C$-homogeneous graphs were classified in \cite{Gardiner2} and \cite{Enomoto1}, and the countably infinite $C$-homogeneous graphs were classified in \cite{Gray1}. Note that as for homogeneity, 
there is a difference between a $C$-homogeneous bipartite graph and a $C$-homogeneous graph that happens to be bipartite. Therefore in order to classify the $C$-homogeneous bipartite graphs it is not simply a case of reading off those undirected $C$-homogeneous graphs that happen to be bipartite. It turns out, however, that many of the arguments used in \cite{Gray1} for the study of $C$-homogeneous graphs may be easily adapted in order to obtain results about $C$-homogeneous bipartite graphs.  We now give a classification of 
the locally-finite $C$-homogeneous bipartite graphs, which is stated in Theorem~\ref{chomogeneousbipartite} below. We note that extending this to all countable bipartite graphs, not just those that are locally-finite, would not require much more work, with the only additional examples being infinite valency analogues of locally-finite ones, and the generic `random' homogeneous bipartite graph (i.e. the Fra{\"{\i}}ss{\'e} limit of the family of all finite bipartite graphs; see \cite{evans} for details).  

For the remainder of this subsection $\Gamma$ will denote a connected $C$-homogeneous locally-finite bipartite graph with bipartition $X \cup Y$. If the vertices of $X$ (or of $Y$) all have degree $1$ then $\Gamma \cong K_{1,m}$ for some $m \geq 1$, so we shall suppose that this is not the case.

The following lemma is proved by modifying the arguments of 
\cite[Lemma~7, Lemma~30]{Gray1}.

\begin{lem}\label{embedsasquare}
Let $\Gamma$ be a connected $C$-homogeneous locally-finite bipartite graph with bipartition $X \cup Y$. 
If $\Gamma$  is not a tree and has at least one vertex with degree greater than $2$ then $\Gamma$  
embeds $C_4$ as an induced subgraph. 
\end{lem}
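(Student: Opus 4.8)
The plan is to reduce the statement to the claim that $\Gamma$ has girth $4$: in a bipartite graph any $4$-cycle is automatically induced, since the only two possible chords would join vertices lying in the same part of the bipartition, so ``contains a $4$-cycle'' and ``embeds $C_4$ as an induced subgraph'' coincide. First I would extract the rigidity forced by $C$-homogeneity. Since single vertices, single edges and paths of length $2$ are connected, $\Aut(\Gamma)$ is transitive on $X$, transitive on $Y$, and each vertex stabiliser is $2$-transitive on the neighbourhood of its vertex; in particular $\Gamma$ is biregular, of degrees $d_X$ and $d_Y$ say. As $\Gamma$ is not a tree its girth $2g$ is finite, and by the symmetry of the statement in $X$ and $Y$ I may assume the high-degree part is $X$, so $d_X \geq 3$ and $d_Y \geq 2$. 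Suppose for contradiction that $g \geq 3$, i.e.\ that $\Gamma$ has no $4$-cycle.

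The heart of the argument is a rerouting of a shortest cycle. Fix a shortest cycle $C = (v_0, v_1, \ldots, v_{2g-1})$, labelled so that $v_0 \in Y$ and $v_1 \in X$. Deleting $v_{2g-1}$ leaves the induced path $P_0 = (v_0, v_1, \ldots, v_{2g-2})$, whose endpoints have common neighbour $v_{2g-1}$. Using $d_X \geq 3$, choose a neighbour $v'$ of $v_1$ with $v' \notin \{v_0,v_2\}$; the girth bound forces $v'$ to be non-adjacent to $v_2,\ldots,v_{2g-2}$ (an edge $v'v_i$ would close a cycle of length $i+1 \leq 2g-1$), so $P_1 = (v', v_1, \ldots, v_{2g-2})$ is again an induced path of the same length and bipartition type. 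The map fixing $v_1,\ldots,v_{2g-2}$ and sending $v_0 \mapsto v'$ is an isomorphism $P_0 \to P_1$ of connected induced subgraphs preserving the bipartition, so by $C$-homogeneity it extends to an automorphism $\alpha$. Then $\alpha(C)$ is a second shortest cycle sharing the sub-path $(v_1,\ldots,v_{2g-2})$ with $C$, and $w' := \alpha(v_{2g-1})$ is a common neighbour of $v'$ and $v_{2g-2}$.

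Now I would compare the two ``closings''. If $w' = v_{2g-1}$, then $v_0$ and $v'$ would be common neighbours of both $v_1$ and $v_{2g-1}$, producing the $4$-cycle $v_0, v_1, v', v_{2g-1}$, contrary to $g \geq 3$; hence $w' \neq v_{2g-1}$. One checks that $v', w', v_0, v_{2g-1}$ are pairwise distinct (all forced by the girth and the bipartition: e.g.\ $v' \neq v_{2g-1}$ since $v' \sim v_1$ but $v_{2g-1} \not\sim v_1$, and $w' \neq v_0$ since $w' \sim v'$ while $v_0 \not\sim v'$). Thus the two length-$3$ ears $v_{2g-2}\,v_{2g-1}\,v_0\,v_1$ and $v_{2g-2}\,w'\,v'\,v_1$ joining $v_{2g-2}$ to $v_1$ are internally disjoint, and their union is a $6$-cycle. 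For $g \geq 4$ this contradicts minimality of the girth, leaving only $g = 2$ (girth $4$, which is what we want) and $g = 3$.

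The remaining case, girth exactly $6$, is where I expect the real difficulty to lie. Here the same rerouting only produces another $6$-cycle; indeed it shows $d_Y \geq 3$ as well, the ball of radius $2$ about every vertex is a tree, and the local structure is that of the incidence graph of a generalised triangle. Local-finiteness forces this to be finite, and the obstacle is to exclude the resulting finite, highly symmetric girth-$6$ bipartite graphs — the prototype being the Heawood graph of the Fano plane — from being $C$-homogeneous. This is exactly the point at which one adapts \cite[Lemma~30]{Gray1}: one produces two isomorphic connected induced configurations lying in distinct orbits of $\Aut(\Gamma)$, so that the extending automorphism demanded by $C$-homogeneity cannot exist. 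I would therefore regard the girth-$6$ exclusion as the main obstacle, the preceding girth $\geq 8$ rerouting being comparatively routine.
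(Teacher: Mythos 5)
Your girth reduction and the rerouting step are sound: a $4$-cycle in a bipartite graph is automatically induced, and your argument that an extending automorphism $\alpha$ sends the shortest cycle $C$ to a second shortest cycle sharing the subpath $(v_1,\ldots,v_{2g-2})$, yielding a $6$-cycle through $v_1, v_0, v_{2g-1}, v_{2g-2}, w', v'$, correctly eliminates girth $\geq 8$. This is the same mechanism as the paper's first step (there phrased as: extend $(b,v,c)$ to a copy of $C_n$, reroute $c$ to a third neighbour $a$ of $v$, and observe that the image of the closing vertex creates a $C_4$ or $C_6$), so up to this point you match the paper.

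However, the girth-$6$ case — which you yourself identify as ``the main obstacle'' — is not proved; it is deferred with a gesture at adapting \cite[Lemma~30]{Gray1}, and the scaffolding you erect around it is flawed. In particular, the claim that local-finiteness forces the graph to be finite is unjustified: local finiteness plus girth $6$ does not imply finiteness (nor does anything you have established imply diameter $3$, which is what the ``incidence graph of a generalised triangle'' reading would require), and in fact the paper's argument neither asserts nor needs finiteness of $\Gamma$. What the paper actually does in the girth-$6$ case is entirely local: fixing an edge $\{x,y\}$ with $\deg(x)\geq 3$, setting $X_i = \Gamma(x_i)\setminus\{x\}$ and $Y_j = \Gamma(y_j)\setminus\{y\}$, it uses the pointwise stabiliser of $\{x,y,x_i,y_j\}$ (transitive on $X_i$ and on $Y_j$ by $C$-homogeneity) together with the absence of $C_4$ to show each $\lb X_i \cup Y_j \rb$ is a perfect matching; composing the matchings $X_1 \to Y_1 \to X_2$ gives a bijection $\varphi$, and $C$-homogeneity then supplies an automorphism fixing $\{x,y,x_1,x_2,y_1,a,b\}$ pointwise while interchanging $\varphi(a)$ and $\varphi(b)$, which is incompatible with the matching through $Y_1$. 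None of this rigidity analysis appears in your proposal, so the lemma is not established: you have reduced girth $\geq 8$ to a contradiction, but the case girth exactly $6$ remains open in your write-up, and the route you sketch toward it (finiteness, then excluding Heawood-type graphs) would itself need repair before it could be completed.
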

\begin{proof}
Let $C_n$ be the smallest cycle that embeds into $\Gamma$. First we establish $n \leq 6$ and
then we shall rule out $n=6$ as a possibility. 

Suppose, seeking a contradiction, that the smallest cycle that embeds is $C_n$ where $n \geq 7$. Note that $n$ must be even since $\Gamma$ is bipartite. Fix a 
vertex $v \in V\Gamma$ with degree at least $3$. Such a vertex exists by the assumptions of the lemma. 
Let $a,b,c$ be distinct elements of $\Gamma(v)$; they will be pairwise non-adjacent as $\Gamma$ is bipartite. By $C$-homogeneity
 the path $(b,v,c)$ extends to a copy $(v,b,b_1, \ldots, b_k, c)$ of $C_n$. Since $n \geq 7$ it follows that $k \geq 4$ and 
therefore $a$ is not adjacent to $b_i$ for all $1 \leq i \leq k$ (since any such edge would create a cycle in
 $\Gamma$ shorter than $C_n$ itself). Also, $a$ is not adjacent to $c$ or $b$ since they all belong to the same part of the bipartition of $\Gamma$. Now by $C$-homogeneity there is an automorphism $\alpha$, preserving the bipartition,  and satisfying
$$
\alpha(\lb c,v,b,b_1,b_2,\ldots,b_{k-1} \rb) = \lb a,v,b,b_1,b_2,\ldots,b_{k-1} \rb.
$$   
Note that the vertex $\alpha(b_k)$ does not belong to $B = \{ v,a,b,c,b_1,b_2, \ldots, b_k \}$ since $\alpha(b_k)$ is adjacent both to $a$ and to $b_{k-1}$, and none of the vertices in $B$ have this property. Consider $\lb a,v,c,b_k,b_{k-1},\alpha(b_k)\rb$. If $\alpha(b_k)$ is adjacent to $c$ (which
happens for instance if $\alpha(b_k)=b_k$) then  
$\lb a,v,c,\alpha(b_k) \rb \cong C_4$ which is a contradiction. Otherwise $\alpha(b_k) \not\sim c$ and $\alpha(b_k) \not\sim b_k$ (since they are in the same part of the bipartition) and $\lb a,v,c,b_k,b_{k-1},\alpha(b_k) \rb \cong C_6$, which is again a contradiction. We conclude that $n \leq 6$, and hence since $\Gamma$ is bipartite, the only possibilities are $n=4$ or $n=6$. So to complete the proof of the lemma it suffices to show that $n \neq 6$. 

Seeking a contradiction suppose that $C_6$ is the smallest cycle which embeds in $\Gamma$.
Fix an edge $\{x,y\}$ in the graph $\Gamma = X \cup Y$ with $x \in X$ and $y \in Y$, and without loss of generality suppose that 
$x$ has degree at least $3$. 
Let $\{x_i : i \in I\} = \Gamma(x) \setminus \{y\}$ and let $\{y_j : j \in J\} 
= \Gamma(y) \setminus \{x\}$. Also define $X_i = \Gamma(x_i) \setminus \{x\}$ for $i \in I$, and $Y_j = \Gamma(y_j) \setminus \{y\}$ 
for each $j \in J$. Our assumptions on cycles ensure that distinct sets $X_i$ and $X_{i'}$ are disjoint with no edges between them for all $i, i' \in I$, and distinct sets $Y_j$ and $Y_{j'}$ are disjoint with no edges between then for all $j,j' \in J$. Moreover the sets $X_i$ and $Y_j$ are disjoint for all $i \in I$, $j \in J$, since $X_i \subseteq X$ while $Y_i \subseteq Y$. 

Let $i \in I$, $j \in J$ and consider the graph $\lb X_i \cup Y_j \rb$. For every $v \in Y_j$ there is at most one $u \in X_i$ with $v \sim u$, for otherwise with $u, u' \in X_i$, $v \sim u$ and $v \sim u'$ then $\langle x_i, u, u', v \rangle$ would be a square, a contradiction. Similarly, every $u \in X_i$ is adjacent to at most one $v \in Y_j$. Since $\Gamma$ embeds $C_6$ there is at least one pair $u \in X_i$ and $v \in Y_j$ with $u \sim v$.
By $C$-homogeneity the pointwise stabiliser of $\{ x,y,x_i,y_j \}$ acts transitively on $Y_j$ and also acts transitively on $X_i$. It follows that every vertex in $X_i$ is adjacent to exactly one vertex of $Y_j$, and vice versa. Hence $\lb X_i \cup Y_j \rb$ is a perfect matching. In particular $|X_i| = |Y_j|$ for all $i \in I$ and $j \in J$, and this implies $|J| = |X_i| = |Y_j| = |I|$. It follows from this that all of the vertices in $\Gamma$ have the same degree, and that this degree is strictly greater than $2$.   

Let $y_1 \in \Gamma(y) \setminus \{ x \}$ and $x_1, x_2 \in \Gamma(x) \setminus \{ y \}$ with $x_1 \neq x_2$. This is possible since $x$ has degree at least $3$. There
 is a map $\varphi:X_1 \rightarrow X_2$ given by composing the bijection from $X_1$ to $Y_1$, and that from 
$Y_1$ to $X_2$, given by the perfect matchings $\langle X_1 \cup Y_1 \rangle$ and $\langle X_2 \cup Y_1 \rangle$. Fix $a,b \in X_1$ with $a \neq b$. By $C$-homogeneity there is an 
automorphism $\alpha$ fixing $\{ x,y,x_1,x_2,y_1,a,b \}$ and interchanging $\varphi(a)$ and $\varphi(b)$.
Then the unique neighbour of $a$ in $Y_1$ is adjacent to $\varphi(a)$ but not to $\varphi(b)$, a contradiction.  
\end{proof}

\begin{lem}\label{neighbourhoods}
Let $\Gamma$ be a connected $C$-homogeneous locally-finite bipartite graph with bipartition $X \cup Y$.
Let $x \in X$ and $y \in Y$ be such that $\{x,y\} \in E\Gamma$, and define $A = \Gamma(x) \setminus \{y\}$ and $B = \Gamma(y) \setminus \{ x\}$. Then $\Omega = \lb A \cup B \rb$ is a finite homogeneous bipartite graph, and therefore is one of: a null bipartite graph, complete bipartite, complement of a perfect matching, or a perfect matching.
\end{lem}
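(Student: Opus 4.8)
The plan is to show that the bipartite graph $\Omega = \lb A \cup B \rb$ is a finite \emph{homogeneous} bipartite graph, and then invoke the classification of finite homogeneous bipartite graphs from \cite{Goldstern1} to conclude that $\Omega$ is one of the four listed types. Finiteness of $A$ and $B$ is immediate from local-finiteness of $\Gamma$, so the entire content is establishing homogeneity of the induced bipartite graph on $A \cup B$.

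First I would set up the bipartition of $\Omega$: writing $A \subseteq X$ and $B \subseteq Y$, the relevant automorphisms of $\Omega$ must respect the partition $A \cup B$, matching Definition~\ref{hom_bipartite}. To prove homogeneity I would take an arbitrary bipartition-preserving isomorphism $\theta : \lb A' \cup B' \rb \rightarrow \lb A'' \cup B'' \rb$ between finite induced subgraphs of $\Omega$ (with $A', A'' \subseteq A$ and $B', B'' \subseteq B$), including the case where these subgraphs are disconnected, and produce an automorphism of $\Omega$ extending it. The key idea is to augment $\theta$ with the anchor vertices $x$ and $y$: since every vertex of $A$ is adjacent to $x$ and every vertex of $B$ is adjacent to $y$, the set $\{x,y\} \cup A' \cup B'$ together with the edge $\{x,y\}$ forms a \emph{connected} induced subgraph of $\Gamma$, and likewise for $\{x,y\} \cup A'' \cup B''$. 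Extending $\theta$ by fixing both $x$ and $y$ gives a map $\theta'$ between these two connected subgraphs; I must check $\theta'$ is an isomorphism of induced subgraphs, which follows because the edges $x \sim a$, $y \sim b$ are present for all $a \in A$, $b \in B$ (so these adjacencies are preserved automatically), while edges and non-edges among $A' \cup B'$ are handled by $\theta$, and there are no $x$--$b$ or $y$--$a$ edges to worry about by the definitions $A = \Gamma(x)\setminus\{y\}$ and $B = \Gamma(y)\setminus\{x\}$ (triangle-freeness / bipartiteness ensures $x \not\sim b$ for $b \in B \subseteq Y$ and $y \not\sim a$ for $a \in A \subseteq X$).

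Next, $C$-homogeneity of $\Gamma$ applies to the connected isomorphism $\theta'$, yielding an automorphism $\alpha$ of the bipartite graph $\Gamma$ with $\alpha\rrestriction(\{x,y\}\cup A'\cup B') = \theta'$. In particular $\alpha$ fixes $x$ and $y$, so $\alpha$ permutes $A = \Gamma(x)\setminus\{y\}$ and permutes $B = \Gamma(y)\setminus\{x\}$ setwise, and therefore restricts to a bipartition-preserving automorphism of $\Omega$. This restriction extends the original $\theta$, completing the verification that $\Omega$ is a homogeneous bipartite graph. The reason we may fix $x$ and $y$ and turn a possibly-disconnected $\theta$ into a connected map is exactly the trick that converts $C$-homogeneity back into full homogeneity on the neighbourhood-derived graph $\Omega$.

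The main obstacle I anticipate is the verification that $\theta'$ is genuinely an isomorphism of induced subdigraphs (equivalently, that adjoining $x,y$ does not create or destroy edges incompatibly): one must be careful that the only edges gained by adding $x$ and $y$ are precisely $\{x,y\}$, all edges $\{x,a\}$ for $a \in A'$, and all edges $\{y,b\}$ for $b \in B'$, and that these are matched identically on the image side since $\theta$ preserves the bipartition and fixes the adjacency pattern to the anchors uniformly. Here the hypothesis that $\Gamma$ is triangle-free (bipartite) is what rules out spurious adjacencies such as $x \sim b$ or $a \sim a'$. Once homogeneity is established, the appeal to the classification in \cite{Goldstern1} is routine, giving exactly the four possibilities: null bipartite graph, complete bipartite, complement of a perfect matching, or a perfect matching.
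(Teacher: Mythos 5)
Your proposal is correct and takes essentially the same route as the paper: extend the bipartition-preserving isomorphism by fixing the anchor vertices $x$ and $y$ to obtain an isomorphism between \emph{connected} induced subgraphs, apply $C$-homogeneity of $\Gamma$, restrict the resulting automorphism (which fixes $x$ and $y$ and hence stabilises $\Omega$ setwise) back to $\Omega$, and then read off the possibilities from the classification in \cite{Goldstern1}. One labelling slip to correct: since $A = \Gamma(x) \setminus \{y\}$ and $B = \Gamma(y) \setminus \{x\}$, in fact $A \subseteq Y$ and $B \subseteq X$ (not the reverse, as you wrote), and it is with these corrected sides that bipartiteness yields the non-adjacencies $x \not\sim b$ for $b \in B$ and $y \not\sim a$ for $a \in A$ that your verification of $\theta'$ requires.
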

\begin{proof}
If either $A$ or $B$ is empty then $\Omega$ is a null bipartite graph, 
so suppose otherwise. 
The graph $\Omega$ is finite since $\Gamma$ is locally finite. Let $\varphi:U \rightarrow V$ be an isomorphism between induced subgraphs of $\Omega$ that preserves the bipartition. 
Then extend this isomorphism to $\hat{\varphi}: \{ x,y\} \cup U \rightarrow \{x,y\} \cup V$ by defining $\hat{\varphi}(x) = x$ and $\hat{\varphi}(y) = y$. Now $\hat{\varphi}$ is an isomorphism between connected induced subgraphs and so by $C$-homogeneity extends to an automorphism $\alpha$ of $\Gamma$. But $g$ fixes both $x$ and $y$, so $\alpha(\Omega) = \Omega$ and the restriction of $\alpha$ to $\Omega$ is an automorphism of the bipartite graph $\Omega$ extending $\varphi$. 
Finally, by inspection of the list of homogeneous bipartite graphs given in \cite[Section~1]{Goldstern1} we obtain the possibilities for $\Omega$ listed in the lemma.
\end{proof}

Note that by Lemma~\ref{embedsasquare} if $\Gamma$ is not a cycle or a tree then the bipartite graph $\Omega$ in Lemma~\ref{neighbourhoods} has at least one edge, and so is not a null bipartite graph. We are now in a position to complete the classification of locally-finite $C$-homogeneous bipartite graphs, which will prove part (iii) of Theorem~\ref{bigtheorem}.    

\begin{thm}\label{chomogeneousbipartite}
A connected graph $\Gamma$ is a locally-finite $C$-homogeneous bipartite graph if and only if $\Gamma$ is isomorphic to one of the following:
\begin{enumerate}
\item cycle $C_m$ ($m$ even);
\item infinite semiregular tree $T_{a,b}$ ($a,b \in \mathbb{N}$);
\item complete bipartite graph $K_{m,n}$ ($m,n \in \mathbb{N}$);
\item complement of a perfect matching.
\end{enumerate}
\end{thm}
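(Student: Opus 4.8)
The plan is to prove the harder (forward) direction: assuming $\Gamma$ is a connected $C$-homogeneous locally-finite bipartite graph that is neither a cycle nor a tree, I will show it must be $K_{m,n}$ or the complement of a perfect matching. The backward direction, verifying that each listed graph is genuinely $C$-homogeneous, is routine and can be checked by hand. The key local invariant is the graph $\Omega = \lb A \cup B \rb$ from Lemma~\ref{neighbourhoods}, where $A = \Gamma(x)\setminus\{y\}$, $B = \Gamma(y)\setminus\{x\}$ for a fixed edge $\{x,y\}$. By Lemma~\ref{neighbourhoods} together with Lemma~\ref{embedsasquare}, if $\Gamma$ is not a cycle or tree then $\Omega$ embeds $C_4$ and hence is one of: complete bipartite, complement of a perfect matching, or a perfect matching. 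So the proof splits into three cases according to the isomorphism type of $\Omega$.

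First I would dispose of the perfect-matching case for $\Omega$. Here each vertex of $A$ has exactly one neighbour in $B$ and vice versa. I would argue this forces $\Gamma$ locally to look like a union of $6$-cycles through the edge $\{x,y\}$ with no shorter cycles, contradicting Lemma~\ref{embedsasquare} (which guarantees an induced $C_4$ once there is a vertex of degree $>2$); thus this subcase cannot actually arise under our standing hypothesis that $\Gamma$ is not a cycle or tree. The substantive work lies in the remaining two cases. When $\Omega$ is complete bipartite, I would show that the neighbourhood structure propagates: every pair of vertices at distance $2$ has a common neighbour set that is ``as large as possible'', and a connectedness-plus-homogeneity induction forces $\Gamma$ itself to be complete bipartite $K_{m,n}$. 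The idea is that $C$-homogeneity lets me transport any edge to any other, so the local picture at $\{x,y\}$ is the same everywhere; if two vertices $x,x'\in X$ have a common neighbour then by completeness of $\Omega$ they have all the same neighbours, and connectedness then collapses $X$ into a single ``block'' all joined to all of $Y$.

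The main obstacle, and the case requiring the most care, is when $\Omega$ is the complement of a perfect matching. Here I expect the argument to mirror the reasoning in Lemma~\ref{embedsasquare}: I would fix the edge $\{x,y\}$, examine the bijective ``non-matched'' correspondences between $A$ and $B$, and use $C$-homogeneity of pointwise stabilisers acting on these neighbour sets to pin down a global bijection $\eta : X \to Y$ with $x' \sim y'$ if and only if $y' \neq \eta(x')$. The delicate point is showing global consistency of the local matchings $\eta$ defined at each edge, so that they patch together into a single well-defined bijection on all of $\Gamma$; this is where I would lean hardest on transitivity of $\Aut(\Gamma)$ on edges and on the rigidity coming from stabilisers of short paths. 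Once $\eta$ is established globally, the conclusion $\Gamma \cong CP_n$ is immediate, and combining all cases with the tree and cycle possibilities completes the classification; since any such $\Gamma$ is locally finite and the degrees are forced to be finite and equal within each part, the parameters $m,n,a,b$ are exactly the finite values appearing in the statement.
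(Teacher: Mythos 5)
Your overall skeleton matches the paper's proof: reduce to the local graph $\Omega=\lb A\cup B\rb$ via Lemma~\ref{neighbourhoods}, use Lemma~\ref{embedsasquare} to see $\Omega$ is non-null, and split into the three cases complete bipartite, complement of a perfect matching, and perfect matching. Your treatments of the first two cases are sketchy but in the right spirit (the paper does the complete bipartite case by degree counting plus connectedness, and the $CP$ case by a direct local analysis identifying the matched partners $z$ of $x$ and $z'$ of $y$, rather than by patching local matchings into a global bijection $\eta$ --- your route would additionally need to show the graph closes up after finitely many vertices, which the paper gets for free from its local analysis). However, there is a genuine gap in your disposal of the perfect-matching case, and it is precisely the case where the paper has to work hardest.

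You claim that if $\Omega$ is a perfect matching then $\Gamma$ has no short cycles through $\{x,y\}$, contradicting Lemma~\ref{embedsasquare}. This is false: whenever $\Omega$ has an edge $a\sim f(a)$ with $a\in A$, $f(a)\in B$, the $4$-cycle $x,a,f(a),y$ is an \emph{induced} square through the edge $\{x,y\}$ (the only possible chords $\{x,f(a)\}$ and $\{a,y\}$ join vertices in the same part of the bipartition). So a perfect matching $\Omega$ produces induced squares in abundance and is perfectly consistent with Lemma~\ref{embedsasquare}; indeed the paper's Case~3 \emph{uses} these squares, noting that the $2$-arcs $xyf(p_1)$ and $yxp_1$ extend to squares uniquely. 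I suspect the error stems from your misstatement that ``$\Omega$ embeds $C_4$'': Lemma~\ref{embedsasquare} gives an induced $C_4$ in $\Gamma$, which only forces $\Omega$ to have at least one edge, not to contain a square itself. Ruling out the matching case with $|\Omega|\geq 6$ (the cases $|\Omega|\in\{2,4\}$ being subsumed by the other cases, since a perfect matching on four vertices is $CP_2$) requires a real argument: the paper, following \cite[Proposition~33]{Gray1}, takes $p_1,p_2,p_3\in A$, lets $r_{ij}$ be the unique vertex completing the $2$-arc $p_ixp_j$ to a square, and applies $C$-homogeneity to an automorphism fixing the path $(r_{12},p_1,x,p_3)$ pointwise while sending $r_{23}\mapsto f(p_3)$; uniqueness of square-completions forces this automorphism to fix $p_2$, which is impossible because $p_2\sim r_{23}$ while $p_2\not\sim f(p_3)$. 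Without an argument of this kind your classification is incomplete: nothing you have written excludes a hypothetical $C$-homogeneous bipartite graph whose local graph is a large perfect matching.
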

\begin{proof}
Clearly each of the graphs listed is a locally-finite $C$-homogeneous bipartite graph. For the converse, let $\Gamma = X \cup Y$ be an arbitrary locally-finite $C$-homogeneous bipartite graph. By definition each vertex in $\Gamma$ has one of two possible valencies (depending on the part of the bipartition that the vertex belongs to). If all the vertices in $X$ (or dually in $Y$) have degree $1$ then $\Gamma \cong K_{m,1}$ for some $m$, so suppose otherwise. Also, if $\Gamma$ is a cycle or a tree then we are done, so suppose not. Thus $\Gamma$ satisfies the hypotheses of Lemma~\ref{embedsasquare} and hence embeds a square $C_4$.   

Let $x$, $y$, $A$, $B$ and $\Omega$ be as defined in the statement of Lemma~\ref{neighbourhoods}, noting that $\Omega$ must have at least one edge since $\Gamma$ embeds a square. We now consider each possibility for $\Omega$, as listed in Lemma~\ref{neighbourhoods}, determining the possibilities for $\Gamma$ in each case.

\bigskip

\setcounter{case}{0}

\begin{case} $\Omega$ is complete bipartite. \end{case}
Then $(\{x\} \cup B) \cup (\{y\} \cup A)$ induces a complete bipartite graph. Also since $x$ has degree $|A|+1$, every vertex in $B$ also has degree $|A|+1$. Similarly every vertex in $A$ has degree $|B|+1$. Since $\Gamma$ is connected it follows that there are no other vertices, i.e. $\Gamma =    (\{x\} \cup B) \cup (\{y\} \cup A)$ and so $\Gamma$ is a complete bipartite graph.

\bigskip

\begin{case} $\Omega$ is the complement of perfect matching with at least four vertices. \end{case}
Since the degree of $x$ is $|A|+1$ it follows that every vertex in $B$ also has
degree $|A|+1$. Let $b \in B$ and let $z$ be the unique neighbour of
$b$ not in $A \cup \{y\}$. Fix $a \in A$ with $a\sim b$. By
$C$-homogeneity $|\Gamma(y) \cap \Gamma(a)| = |\Gamma(y) \cap
\Gamma(z)| = |B|$. Since $z \not\sim x$ it follows that $B \subseteq
\Gamma(z)$. Let $z'$ be the unique
neighbour of $z$ not in $B \cup \{x\}$. Since $|\Gamma(x) \cap \Gamma(b)|
\geq 2$ (because squares embed by Lemma~\ref{embedsasquare}) it
follows that $|\Gamma(b) \cap \Gamma(z')| \geq 2$, which in turn
implies that there exists $v \in A$ with $v \sim z'$. Now using a dual
argument to the one above we conclude that $\Gamma(z') = A \cup
\{z\}$. This completely determines the structure of $\Gamma$, and we
conclude that $\Gamma$ is the complement of a perfect matching. 

\bigskip

\begin{case} $\Omega$ is a perfect matching. \end{case}
If $\Omega$ has either $2$ or $4$ vertices then this comes under one of the previous cases, so suppose that $\Omega$ has at least $6$ vertices. Following \cite[Proposition~33]{Gray1}
let $\{p_1, p_2, p_3\} \subseteq A$ and let $f: A \rightarrow B$ be the bijection determined by the perfect matching $\Omega$. Since the $2$-arcs $x y f(p_1)$ and $y x p_1$ both extend uniquely to squares it follows by $C$-homogeneity that the same is true for all $2$-arcs in the graph $\Omega$. For $i,j \in \{1,2,3\}$ let $r_{ij}$ be the vertex that extends the $2$-arc $p_i x p_j$ to a square. Clearly $r_{ij} \not\in \{x\} \cup B$ for any $1\leq i<j\leq 3$, and also $r_{12} \neq r_{23}$. By $C$-homogeneity there is an automorphism $\alpha$ satisfying $\alpha(r_{12}, p_1, x, p_3, r_{23}) = (r_{12}, p_1, x, p_3, f(p_3))$ (these connected substructures are each isomorphic to a line with $5$ vertices, since $\Gamma$ is bipartite). However, since $\alpha$ fixes pointwise the triple of vertices $\{ x, p_1, r_{12} \}$ it must also fix $p_2$. But this is impossible since $p_2 \sim r_{23}$ while $p_2 \not\sim f(p_3)$. This is a contradiction, and we conclude that this case (with $|\Omega| \geq 6$) does not happen.

This covers all possibilities for $\Omega$ and completes the proof of the theorem.
\end{proof}

Using the $DL(\Delta)$ construction described above, we 
now show that each of the bipartite graphs listed in Theorem~\ref{chomogeneousbipartite}
arises as the reachability graph of some
locally-finite $C$-homogeneous bipartite graph. This will complete the proof of part (iv) of Theorem~\ref{bigtheorem}.  

\begin{thm}\label{CPWalwaysworks}
The digraph $DL(\Delta)$ is $C$-homogeneous if and only if $\Delta$ is isomorphic to one of: a cycle $C_m$ ($m$ even), an infinite semiregular tree $T_{a,b}$ ($a,b \in \mathbb{N}$), a complete bipartite graph $K_{m,n}$ ($m,n \in \mathbb{N}$), or the complement of a perfect matching.
\end{thm}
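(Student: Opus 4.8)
The plan is to prove the two directions separately, the forward implication reducing cleanly to earlier results and the converse carrying the real content.

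For the ``only if'' direction I would first observe that $DL(\Delta)$ is triangle-free and has more than one end. Triangle-freeness holds because any three pairwise-adjacent vertices of $DL(\Delta)$ are three edges of $T$ that pairwise share a vertex, and since $T$ has no cycles these three edges must share a single common vertex $t$; the triangle therefore lies inside the single reachability class sitting at $t$, which is a copy of the bipartite, hence triangle-free, graph $\Delta$. The many ends come from the underlying tree $T$. Working in the locally-finite setting, Lemma~\ref{easyone} then tells us that $\Delta(DL(\Delta))$, which by construction equals $\Delta$, is a locally-finite $C$-homogeneous bipartite graph, and Theorem~\ref{chomogeneousbipartite} forces $\Delta$ to be one of the four listed graphs.

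The converse is where the work lies, and the key is to exploit that $DL(\Delta)$ is a tree-amalgam of copies of $\Delta$: its reachability classes (call them \emph{blocks}) are each isomorphic to $\Delta$, are indexed by $VT$, every vertex of $DL(\Delta)$ lies in exactly two blocks, and two distinct blocks meet in at most one (cut) vertex with incidence pattern that of $T$, so the block--cut structure of $DL(\Delta)$ is itself a tree. I would first prove the structural lemma that for any connected induced subdigraph $F$ and any block $B$ the intersection $F \cap B$ is a \emph{connected} induced subgraph of $B \cong \Delta$: because the block--cut structure is a tree, any walk of $F$ joining two vertices of $B$ that leaves $B$ must re-enter through the same cut vertex, so excursions can be deleted. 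I would also note that an isomorphism between connected induced subdigraphs preserves the reachability relation, hence carries blocks to blocks while respecting the in/out (bipartite) role of each cut vertex. The second ingredient is an \emph{extension lemma}: every automorphism of a single block fixing a prescribed cut vertex $w$ extends to an automorphism of $DL(\Delta)$ fixing pointwise the whole branch hanging off $w$ on the opposite side; here the regularity and freeness of the directed tree $T$, together with edge-transitivity of $\Delta$ (which makes the gluing bijections interchangeable and the branches uniform), let one propagate the prescribed local permutation outward through $T$. With these in hand, I prove $C$-homogeneity by induction on the number of blocks in the subtree of the block--cut tree spanned by $F_1$, for a given isomorphism $\theta\colon F_1 \to F_2$. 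In the base case $F_1$ lies in one block $B$, where $\theta$ restricts to a bipartition-preserving isomorphism between connected subgraphs of $\Delta$; $C$-homogeneity of $\Delta$ extends it to a block isomorphism, and the extension lemma globalizes it. In the inductive step I choose a leaf block $B$ attached to the rest of $F_1$ through a single cut vertex $w$, apply the inductive hypothesis to the smaller piece $F_1'$ to obtain a global $\alpha$ extending $\theta|_{F_1'}$, and then correct $\alpha$ inside the block $\theta(B)=\alpha(B)$: $C$-homogeneity of $\Delta$ supplies a block automorphism fixing $\theta(w)$ that turns $\alpha$ into $\theta$ on $F_1 \cap B$, and the extension lemma realizes this adjustment globally while fixing the branch carrying $\theta(F_1')$, so the corrected map extends $\theta$ on all of $F_1$.

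The main obstacle I expect is the extension lemma, that is, converting local symmetries of a single copy of $\Delta$ into global automorphisms of $DL(\Delta)$ in a \emph{controlled}, branch-fixing way; this is exactly where edge-transitivity of $\Delta$ and the freeness of $T$ must be used carefully to build the global map and verify it is a well-defined automorphism. A secondary point needing care is the structural lemma that $F\cap B$ is connected, since it is precisely what makes $C$-homogeneity of $\Delta$ applicable to the local pieces.
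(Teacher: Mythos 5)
Your proposal is correct and takes essentially the same approach as the paper: the forward direction is exactly the paper's reduction via Lemma~\ref{easyone} and Theorem~\ref{chomogeneousbipartite}, and your converse rests on the same two pillars as the paper's proof, namely the tree-of-blocks structure of $DL(\Delta)$ with the key observation that a connected subdigraph meets each block ($\mathcal{A}$-class) in a connected subgraph, plus $C$-homogeneity of $\Delta$ to supply block-level isomorphisms that are propagated outward through the tree. The only difference is organizational: the paper builds the global automorphism in one induction on distance from a base block (fixing isomorphisms $\theta(e,\varphi(e))$ on the blocks meeting $D_1$ in advance and verifying coherence at cut vertices by a path argument), whereas you induct on the number of blocks spanned by $F_1$ and globalize via a branch-fixing extension lemma — a repackaging of the same propagation argument, with your extension lemma carrying precisely the work the paper does inline.
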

\begin{proof}
We follow the same notation as used in the proof of \cite[Theorem~2.2]{Cameron2}. By Theorem~\ref{chomogeneousbipartite} it is sufficient to prove that $DL(\Delta)$ is $C$-homogeneous if and only if $\Delta$ is a $C$-homogeneous bipartite graph. 

If $DL(\Delta)$ is $C$-homogeneous then, by Lemma~\ref{easyone}, so is $\Delta$. For the converse let $\Delta$ be a connected $C$-homogeneous bipartite graph. Let $D_i$ ($i = 1,2$) be connected finite subdigraphs of $D=DL(\Delta)$ with $\varphi: D_1 \rightarrow D_2$ a given isomorphism. 
From the definition of $DL(\Delta)$ it is immediate that $\Delta(D) = \Delta(DL(\Delta)) \cong \Delta$.  
For an edge $e$ in $DL(\Delta)$ let $V(e)$ denote the vertex set of the bipartite graph $\lb \mathcal{A}(e) \rb$. We must extend $\varphi$ to an automorphism $\hat{\varphi}$ of $D$. Let $ED_1 = \{a_1, \ldots, a_r\} \subseteq ED$, and let $ED_2 = \{b_1, \ldots, b_r  \} \subseteq ED$  with $\varphi(a_i) = b_i$ for all $i$. Note that here we have specified $D_1$ and $D_2$ by listing edges rather than listing vertices. 

Observe that for any edge $e \in ED$ and any two vertices $x,y \in V(e)$, any path in $D$ from $x$ to $y$ must be contained in $V(e)$. This is a consequence of the fact that the set of all blocks $V(f)$ $(f \in ED)$ carries the structure of a tree, and any distinct pair of blocks $V(f)$ and $V(f')$ intersect in at most one vertex. So any path from $x$ to $y$ not contained in $V(e)$ would have to leave $V(e)$ and then later re-enter $V(e)$ at a common vertex, contradicting the definition of path. 
Since $D_1$ is connected, it follows from this observation that for all $e \in ED$ if $D_1$ intersects $V(e)$ then $\lb D_1 \cap V(e) \rb$ is a connected subdigraph of $\lb V(e) \rb$.

Since $\Delta$ is $C$-homogeneous, and $\lb D_1 \cap V(f) \rb$ is empty or connected for all $f \in ED$, for each edge $e \in D_1$ we may fix an isomorphism $\theta(e, \varphi(e)) : V(e) \rightarrow V(\varphi(e))$ which extends $\varphi \upharpoonright_{V(e) \cap VD_1}$. We now define $\hat{\varphi} : VD \rightarrow VD$ inductively as follows.  Begin by defining $\hat{\varphi} : V(a_1) \rightarrow V(b_1)$ as $\hat{\varphi} = \theta(a_1, b_1)$. Let $V_k$ denote the set of all vertices of $D$ that lie in a block $V(f)$ at distance at most $k$ (in the underlying tree of blocks) from the block $V(a_1)$. 
Suppose that $\hat{\varphi}$ has been defined on $V_j$ $(j \leq k)$ and consider $V_{k+1}$. Let $V(a)$ be a block where $a =  (u,v)$ is a directed edge that intersects $V_k$, chosen so that either $u \in V_k$ or $v \in V_k$, but not both. Suppose that $v \in V_k$ and $u \not\in V_k$, the reverse is dealt with using a similar argument. Let $a' \in ED$ be an arc satisfying $V(a') \cap V(a) = \{v\}$. Now there are two possibilities. 

First suppose that $V(a) = V(a_i)$ for some $a_i \in ED_1$. In this case, since $D_1$ is connected we can choose a path $p$ from a vertex of the edge $a_1 \in ED_1$ to a vertex of the edge $a_i \in ED_1$. Now, since $u \not\in V_k$, $v \in V_k$, and $u$ and $v$ are adjacent, it follows that the block $V(a) = V(a_i)$ lies at distance $k+1$ from the block $V(a_1)$. Then since $V(a') \cap V(a)= \{ v \}$ and $V(a')$ lies at distance $k$ from $V(a_1)$ in the underlying tree of blocks, it follows that the path $p$ must intersect the block $V(a')$ and must contain the vertex $v$. As $p$ is a path in $D_1$ we conclude that $v \in D_1$.  
Now we 
define $\hat{\varphi}: V(a) \rightarrow V(\varphi(a_i))$ as $\hat{\varphi} = \theta(a_i,\varphi(a_i))$. Note that $\hat{\varphi}$ has now been defined twice on the vertex $v$, using $\theta(a',\varphi(a'))$ as well, but in both cases $v$ is sent to $\varphi(v)$ since $v \in VD_1$. So the map remains well-defined.

On the other hand, if $V(a) \neq V(a_i)$ for any $a_i \in ED_1$ 
then let $v' = \hat{\varphi}(v)$, which is defined since $v \in V_k$, and let $u'$ be any vertex in $D$ with $u' \rightarrow v'$. Then setting $b = (u',v')$ we define 
$\hat{\varphi}: V(a) \rightarrow V(b)$ to be any isomorphism $\alpha: V(a) \rightarrow V(b)$ that satisfies $\alpha(v) = \hat{\varphi}(v)$. Such an isomorphism $\alpha$ exists since $\Delta(D) \cong \Delta$ is $C$-homogeneous.

Since any two distinct blocks at distance $k$ from $V(a_1)$ are disjoint, $\hat{\varphi}$ is a well-defined mapping from $VD$ to itself. The mapping $\hat{\varphi}$ is onto since every vertex $w \in VD$ lies in a block $V(f)$ at some finite distance $s$ from $V(b_1)$, and so after stage $s$ the vertex $w$ will be in the image. The mapping $\hat{\varphi}$ is one-one since given $x,y \in VD$ with $x \neq y$ if there exists $f \in ED$ with $x,y \in V(f)$ then $\hat{\varphi}(x) \neq \hat{\varphi}(y)$ since $\hat{\varphi} \upharpoonright_{V(f)}$ is a bijection with range $V(f)$. Otherwise, there exist disjoint blocks $V(a)$ and $V(a')$ with $x \in V(a)$ and $y \in V(a')$, and $\hat{\varphi}(x) \neq \hat{\varphi}(y)$ since $\hat{\varphi}$ preserves the distances between blocks. Finally we must check that $\hat{\varphi}$ is a digraph homomorphism. If $x,y \in V(f)$ for some $f \in VD$ then since $\hat{\varphi} \upharpoonright_{V(f)}$ is an isomorphism between two copies of $\Delta$ it follows that $(x,y) \in ED$ if and only if $(\hat{\varphi}(x), \hat{\varphi}(y)) \in ED$. Otherwise, $x$ and $y$ do not belong to a common block, which implies that they are not adjacent in $D$, and their images do not belong to a common block, and so are not adjacent in $D$ either, as required. It follows that $\hat{\varphi}: VD \rightarrow VD$ is a digraph isomorphism.
\end{proof}

This completes the proof of Theorem~\ref{bigtheorem}. 

\section{More constructions of connected-homogeneous digraphs}

In this section we present some examples showing that the $DL(\Delta)$ construction on its own is not enough to give a classification of infinitely ended locally-finite triangle-free $C$-homogeneous digraphs. 
In the process we obtain a new family of highly arc-transitive digraphs without property $Z$. 

We begin with an informal description of a family of digraphs which will be denoted $M(n,k)$ where $n,k \in \mathbb{N}$ with $n \geq 3$ and $k \geq 2$. Then we go on to give a formal definition of $M(n,k)$ below. The digraph $M(n,k)$ is $C$-homogeneous, and its reachability graph is isomorphic to the complement of a perfect matching $CP_n$. Before we proceed we need the following definition.  

\begin{defn}[Reachability intersection digraph]
Let $D$ be an arc-transitive digraph whose reachability graph $\Delta(D)$ is bipartite. The \emph{reachability intersection digraph} $\mathcal{R}(D)$ of $D$ has vertex set the $\mathcal{A}$-classes of $D$, and an arc $\mathcal{C}_1 \rightarrow \mathcal{C}_2$ if and only if there is a $2$-arc $x \rightarrow y \rightarrow z$ in $D$ with $(x,y) \in \mathcal{C}_1$ and $(y,z) \in \mathcal{C}_2$. Note that $\mathcal{R}(D)$ can have pairs of vertices with arcs between them going in both directions. We view such pairs as being joined by an undirected edge. 
\end{defn}

Our construction is based on that given by Malni\v{c} et al. in \cite[Section~2]{Malnic1}. 
The notion of reachability intersection digraph defined above generalises the definition of 
intersection graph given in \cite{Malnic1}. 
For each odd integer $n \geq 3$  the construction in \cite{Malnic1}
gives a highly arc-transitive digraph, denoted $Y_n$, without property $Z$. 
The digraph $Y_n$ has 
in- and out-degree equal to $2$, and the reachability digraph $\Delta(Y_n)$ is an alternating cycle of length $2n$. 

Following the description given in \cite{Malnic1}, to 
give an intuitive recursive definition of the digraph $Y_n$, start with an
alternating cycle $\Delta$ of length $2n$. At each pair of antipodal vertices $u$ and $v$ of
$\Delta$, glue an alternating cycle $\Delta_{u,v}$ of the same length to $\Delta$, in such a way
that $u$ and $v$ are antipodal on $\Delta_{u,v}$ as well, and that their in- and out-degrees
equal $2$. In this way, every vertex on $\Delta$ attains valency $4$ (in-degree $2$ and outdegree $2$). The process is repeated in each of the new alternating cycles and at each pair
of antipodal vertices of valency $2$. 
See \cite[Figure~1]{Malnic1} for an illustration of the digraph $Y_5$. 

The reachability digraphs of $Y_n$ are precisely the alternating cycles of length $2n$ from which it is built. 
It can be seen that any two adjacent alternating cycles in the above
construction intersect in precisely two antipodal vertices. That is, given any two reachability digraphs $\Delta_1$ and $\Delta_2$ in $Y_n$, if $\Delta_1$ and $\Delta_2$ are adjacent in the reachability intersection digraph then $|\Delta_1 \cap \Delta_2| = 2$. That $Y_n$ does not have property $Z$ can be seen by taking one half of each of these two alternating cycles $\Delta_1$ and $\Delta_2$ to obtain an unbalanced cycle. 

Now it may be easily verified that 
for $n \geq 4$ the digraph $Y_n$ is not $C$-homogeneous. To see this, fix a $2$-arc $(a,x,b)$ in $Y_n$ (where $n \geq 4$), let $\Delta_1$ be the reachability graph of the arc $a \rightarrow x$ (which by definition is isomorphic to $C_{2n}$) and let $\Delta_2$ be the reachability graph of the arc $x \rightarrow b$. Then $|\Delta_1 \cap \Delta_2| = 2$, say $\Delta_1 \cap \Delta_2 = \{ x,y \}$. 
(To visualise this, the reader is advised to look at \cite[Figure~1]{Malnic1} where $\Delta_1$ is the alternating cycle that goes around the perimeter of the circle and $\Delta_2$ is the `thin' alternating cycle attached to the perimeter at its top and bottom vertices.)
If $Y_n$ were $C$-homogeneous then, since $n \geq 4$, there would be an automorphism fixing each of the vertices in $\Delta_2$, fixing both the in-neighbours of $x$ (which belong to $\Delta_1$), but interchanging the two in-neighbours of $y$. This is clearly impossible since $\Delta_1$ is a cycle.

On the other hand, as we shall see below, the digraph $Y_3$ is $C$-homogeneous. Now we change our viewpoint slightly, by viewing the $6$-cycle $C_6$ as the complement of a perfect matching $CP_3$, this is possible since they happen to be isomorphic. In this way, the digraph $Y_3$ may be considered as being built up from copies of $CP_3$, glued together as follows. Begin with a copy $X \cup Y$ of $CP_3$ where $X$ and $Y$ are the parts of the bipartition, and all arcs are directed from $X$ to $Y$. Take a matched pair $(x,y) \in X \times Y$ with $x$ unrelated to $y$. Let $X' \cup Y'$ be another copy of $CP_3$, with arcs directed from $X'$ to $Y'$, and let $x', y'$ be an unrelated pair with $x' \in X'$ and $y' \in Y$. Now glue together $X \cup Y$ and $X' \cup Y'$ by identifying $x$ with $y'$, and $y$ with $x'$. So the two copies $X \cup Y$ and $X' \cup Y'$ of $CP_3$, intersect in a set of size $2$, and the arcs in the digraph $X' \cup Y'$ are oriented in the opposite direction to the arcs in $X \cup Y$. Repeating this process for every pair of matched vertices we obtain the tree-like structure illustrated in Figure~\ref{Y6}. This digraph is isomorphic to the digraph $Y_3$ from  \cite[Section~2]{Malnic1}. 
Drawn in this way the underlying reachability intersection digraph is made clearly visible, and is isomorphic to a trivalent tree. There is an obvious natural generalisation of this digraph where we glue together copies of $CP_n$ ($n \geq 3$) in the same tree-like way. We shall denote this digraph by $M(n,2)$ for $n \geq 3$. The underlying reachability intersection digraph of $M(n,2)$ is a tree of valency $n$. The construction may be generalised further still in such a way that the reachability intersection digraph is isomorphic to the Cayley graph of the free product of a finite number of copies of some fixed finite cyclic group, with respect to the natural generating set (see below). For example, we use $M(4,3)$ to denote the digraph illustrated in Figure~\ref{M43}, which is built up from copies of $CP_4$ and has reachability intersection digraph isomorphic to a tree of directed triangles, also illustrated in Figure~\ref{M43}.

\begin{figure}
\begin{center}
\resizebox{.4\textwidth}{!}{\includegraphics[angle=270]{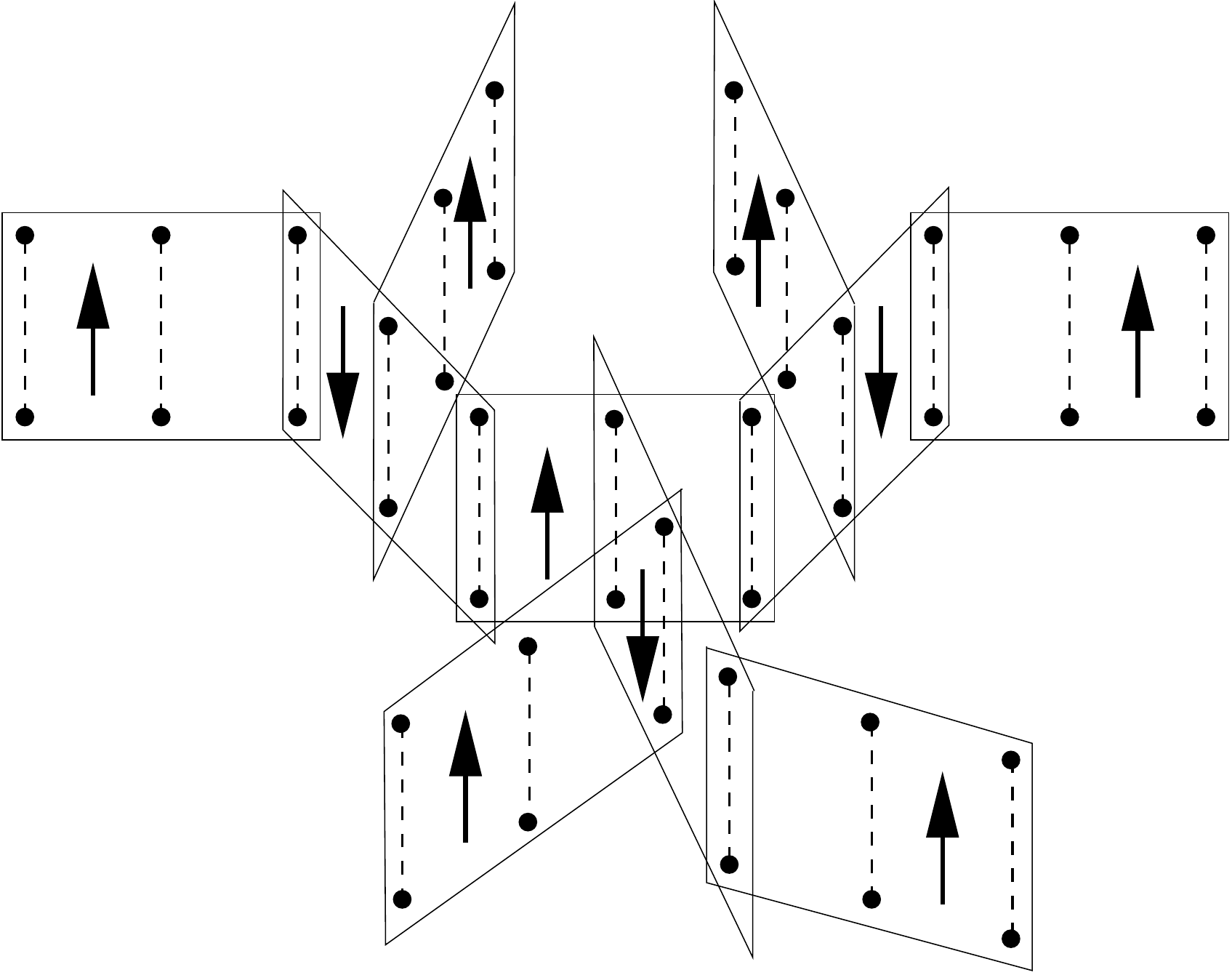}}
\end{center}

\caption{A partial view of the digraph $Y_3 \cong M(3,2)$ defined in \cite{Malnic1}. The arrows indicate the orientation of the arcs, while the dotted edges represent non-adjacent matched pairs of vertices, in the copies of $CP_3$.} \label{Y6}
\end{figure}

\begin{figure} \begin{center}
\begin{tabular*}{0.712\textwidth}{ | c | c | }
 \hline & \\
   
\includegraphics[scale=.35, angle=270]{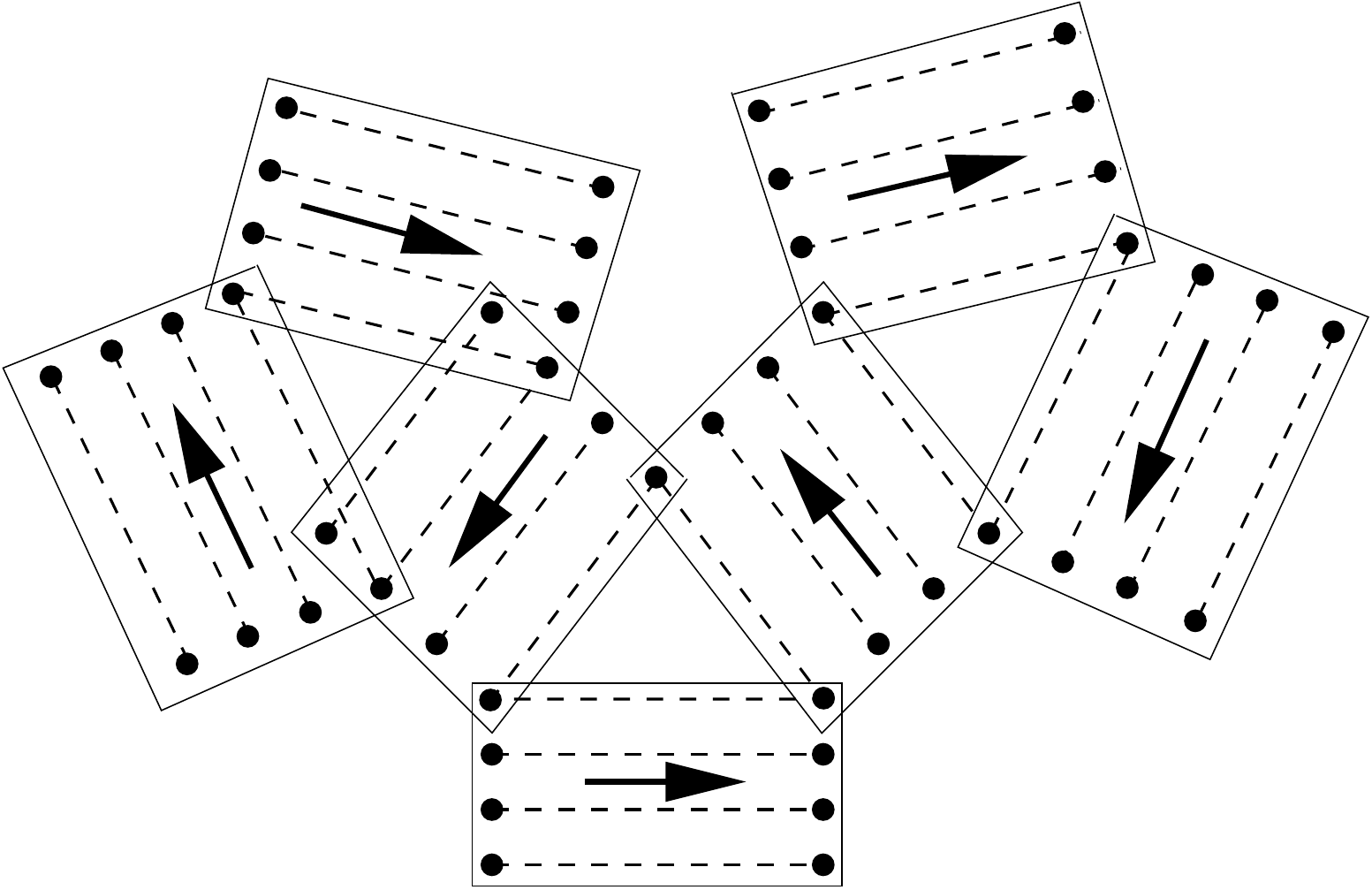} &  \includegraphics[scale=.35, angle=270]{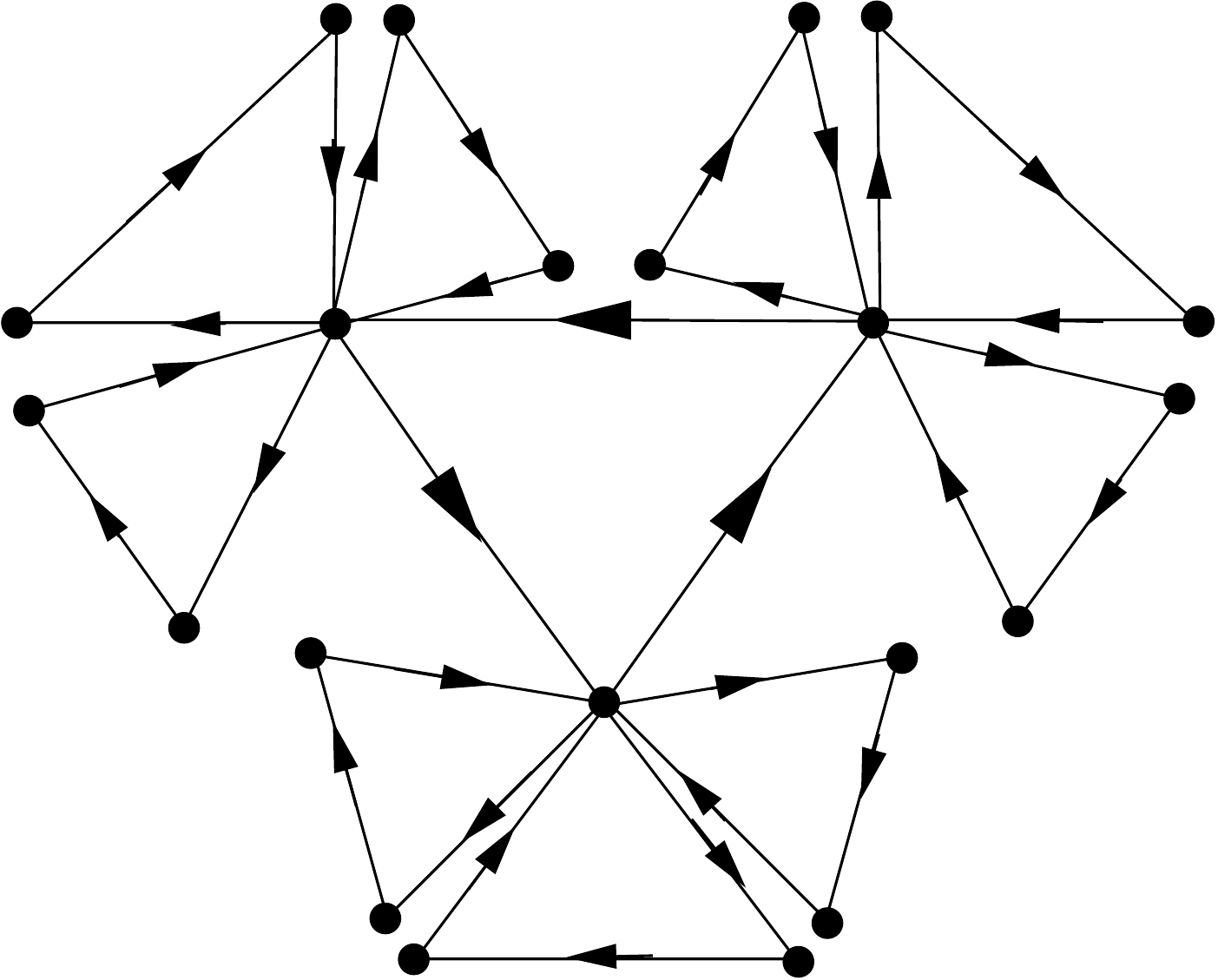}  \\
& \\  
\hline 
   
    $M(4,3)$   
    &  
   
    $\mathcal{R}(M(4,3))$ 
     \\ 

  \hline

\end{tabular*}

\end{center}
\caption[]{A partial view of the digraph $M(4,3)$ and its underlying reachability intersection digraph $\mathcal{R}(M(4,3))$. \label{M43}}
\end{figure}

In general $M(n,k)$ will be constructed in an analogous way to $M(4,3)$ but from copies of $CP_n$ and in such a way that the reachability intersection digraph is isomorphic to the digraph constructed in a tree-like way from gluing together directed $k$-cycles, where every vertex is the meeting point of precisely $n$ cycles. 

\subsection*{\boldmath Formal construction for the digraph $M(n,k)$}

The following definition is based on the construction given in \cite{Malnic1}, which it generalises. 
To the greatest extent possible we follow their notation and terminology. 
Let $n,k \in \mathbb{N}$ with $n \geq 3$, $k \geq 2$. Let $T_{n,k}$ be the directed Cayley graph of the free product of $n$ copies of the cyclic group $\mathbb{Z}_k$
\[
F_{n,k} = \langle a_1 \rangle * \langle a_2 \rangle * \cdots * \langle a_{n} \rangle, \quad a_i^k = 1, \quad i = 1,\ldots, n
\]
with respect to the generating set $A^{(n)} = \{ a_1, \ldots, a_{n}  \}$. 
Observe that every vertex of $T_{n,k}$ is the meeting point of exactly $n$ copies of the directed cycle $D_k$. 
Next we enlarge $T_{n,k}$ to a digraph $T_{n,k}^*$ obtained by replacing each vertex $v$ by a copy of the complete graph $K_n$, where each edge in $K_n$ is represented by arcs going in both directions. This is done in such a way that the $n$ copies of $D_k$ that used to all meet at the vertex $v$, are in  $T_{n,k}^*$ separated out with each one attached to exactly one of the $n$ vertices of $K_n$. 
The digraph $T_{n,k}^*$ has the property that each vertex $v$ has exactly one neighbour $\rightarrow$-related to it, one neighbour $\leftarrow$-related to it, and the remaining $n-1$ neighbours are both $\rightarrow$- and $\leftarrow$-related to $v$ (this is depicted as an undirected edge in Figure~\ref{T34} where $T_{4,3}^*$ is illustrated).
\begin{defn}
For two digraphs $\Gamma_1$ and $\Gamma_2$ we define their tensor product $\Gamma = \Gamma_1 \otimes \Gamma_2$ to be the digraph $\Gamma$ with vertex set $V\Gamma_1 \times V\Gamma_2$ and $((a,b),(c,d)) \in E\Gamma$  if and only if $(a,c) \in E\Gamma_1$ and $(b,d) \in E\Gamma_2$.
\end{defn}

(Note that the tensor product is also sometimes referred to in the literature as the direct product, categorical product or relational product.)

Next we construct the tensor product $T_{n,k}^* \otimes \vec{K_2}$ of $T_{n,k}^*$ by the directed graph $\vec{K_2}$ with two vertices $V\vec{K_2} = \{ a,b \}$ and one arc $a \rightarrow b$. In other words, $T_{n,k}^* \otimes \vec{K_2}$ is the canonical double cover of $T_{n,k}^*$ where all arcs are oriented from level $-$ to level $+$. Note that the copies of $K_n$ in $T_{n,k}^*$ have been transformed into copies of $CP_n$ in the digraph $T_{n,k}^* \otimes \vec{K_2}$. 

We say that an arc $(x,a) \rightarrow (y,b)$ of $T_{n,k}^* \otimes \vec{K_2}$ \emph{arises from a directed cycle of $T_{n,k}^*$} if the arc $x \rightarrow y$ belongs to one of the the directed cycles of $T_{n,k}^*$ (and not one of the copies of the complete graph $K_n$). 
In other words $x \rightarrow y$ in $T_{n,k}^*$ but 
that there is not an arc from $y$ to $x$.
Now consider just the arcs in $T_{n,k}^* \otimes \vec{K_2}$ that  arise from directed cycles of $T_{n,k}^*$. Observe that in $T_{n,k}^*$, not counting the undirected edges in the copies of $K_n$, every vertex has exactly one arc entering it and exactly one arc leaving it, where these two arcs belong to one of the copies of a directed cycle $D_k$ in $T_{n,k}^*$. It follows that the arcs in $T_{n,k}^* \otimes \vec{K_2}$ that  arise from directed cycles of $T_{n,k}^*$ are pairwise disjoint, and they define a bijection (i.e. a matching) between the two levels $VT_{n,k}^* \times \{ a \}$ and $VT_{n,k}^* \times \{ b \}$, where $V\vec{K_2} = \{ a,b \}$. 
Then, the digraph $M(n,k)$ is obtained by contracting all arcs in $T_{n,k}^* \otimes \vec{K_2}$ arising from directed cycles of $T_{n,k}^*$ (i.e. by identifying pairs of vertices related by the matching between the two levels $VT_{n,k}^* \times \{ a \}$ and $VT_{n,k}^* \times \{ b \}$ described above). The digraph $M(n,k)$ has in-degree and out-degree equal to $n-1$, its reachability graph is isomorphic to $CP_n$, and the reachability intersection digraph is isomorphic to $T_{n,k}$.

\begin{figure}
\begin{center}
\resizebox{.4\textwidth}{!}{\includegraphics[angle=270]{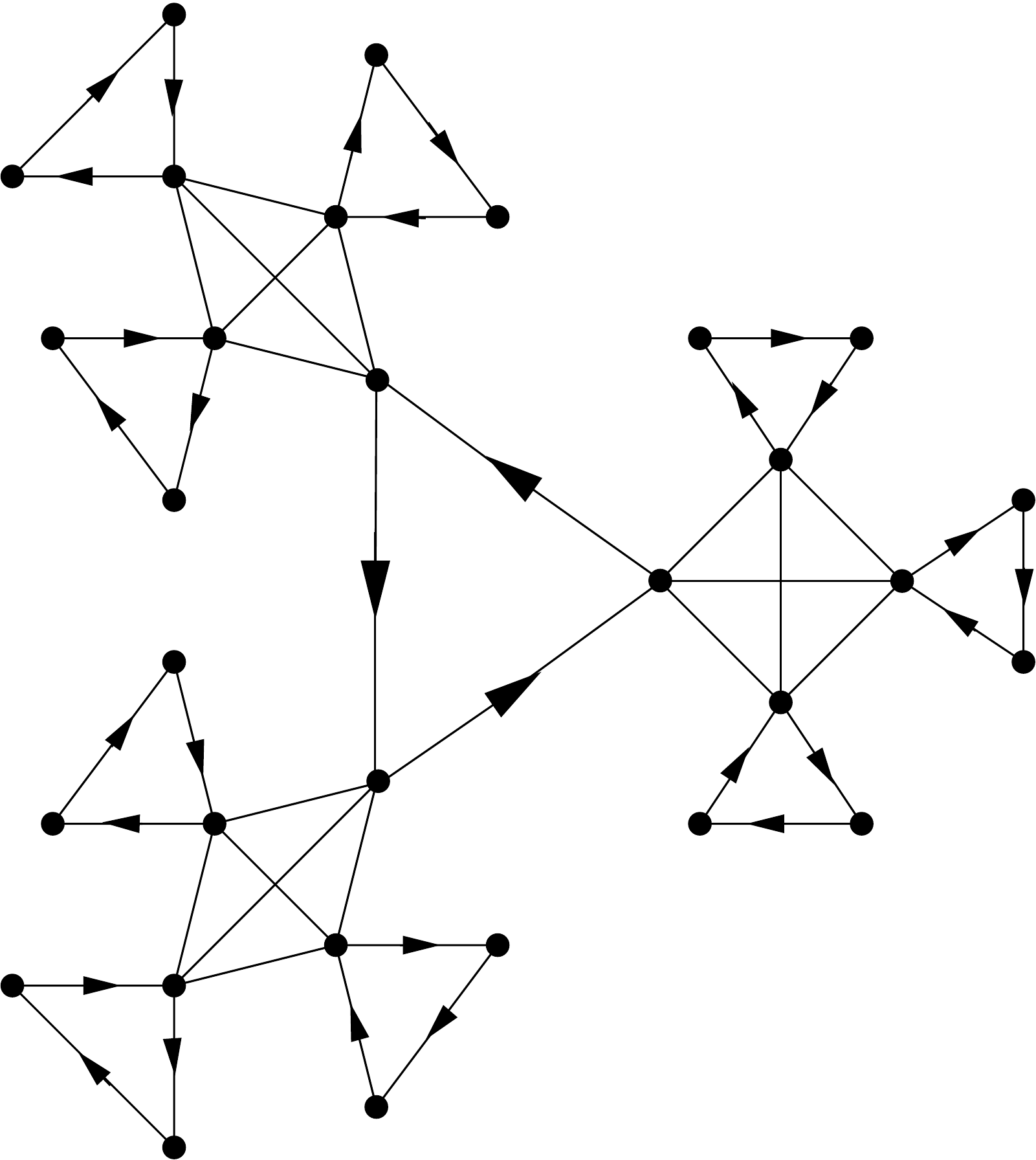}}
\end{center}
\caption{A partial view of the digraph $T_{4,3}^*$.} \label{T34}
\end{figure}

\begin{prop}
The digraph $M(n,k)$ is $C$-homogeneous for $n \geq 3, k \geq 2$.
\end{prop}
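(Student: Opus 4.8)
The plan is to follow the blueprint of the proof of Theorem~\ref{CPWalwaysworks}, treating $M(n,k)$ as a union of \emph{blocks}, each a copy of its reachability graph $\Delta(M(n,k)) \cong CP_n$, glued together according to the reachability intersection digraph $\mathcal{R}(M(n,k)) \cong T_{n,k}$. The essential extra input, compared with the $DL(\Delta)$ case, is that $CP_n$ is not merely $C$-homogeneous but a genuinely \emph{homogeneous} bipartite graph (see the list in Lemma~\ref{neighbourhoods}, or \cite{Goldstern1}); this is exactly what is needed to upgrade that argument to the present, more intricate, gluing pattern.

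First I would pin down the combinatorics of how the blocks fit together, reading it off the construction of $M(n,k)$ from $T_{n,k}^* \otimes \vec{K_2}$. The blocks are indexed by the vertices of $T_{n,k}$ (equivalently by $F_{n,k}$), each vertex of $M(n,k)$ lies in exactly two blocks, and two blocks that are consecutive on one of the directed $k$-cycles of $T_{n,k}$ meet in a single vertex when $k \geq 3$, and in an antipodal (matched, hence non-adjacent) pair when $k = 2$. Thus the block-incidence structure is a \emph{tree of directed $k$-cycles}, rather than the honest tree of single-vertex intersections that occurs for $DL(\Delta)$. I would record an accompanying lemma describing how paths interact with this structure, the key consequence being that a finite connected subdigraph $D_1$ meets only finitely many blocks, and these span a finite \emph{connected} subregion $R$ of $T_{n,k}$.

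Given an isomorphism $\varphi : D_1 \to D_2$ between finite connected subdigraphs, the extension $\hat{\varphi}$ would be built block by block, propagating outward through $T_{n,k}$. Since $\hat{\varphi}$ is forced to preserve the in/out structure it preserves the bipartition of each block, so at each new block $B$ the already-defined part of $\hat{\varphi}$ restricts to a bipartition-preserving partial isomorphism of $CP_n$; by homogeneity of $CP_n$ this extends to a full block isomorphism, determining $\hat{\varphi}$ on $B$. Because intersections can now be disconnected — an antipodal pair, or two distinct entry vertices of a block reached around a cycle — it is precisely here that homogeneity of $CP_n$, and not just $C$-homogeneity, becomes indispensable. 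The verification that the resulting $\hat{\varphi}$ is a bijective digraph homomorphism then proceeds exactly as in Theorem~\ref{CPWalwaysworks}.

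The main obstacle is well-definedness around the $k$-cycles of $T_{n,k}$: a block may be reached by two routes that together close up a directed $k$-cycle of blocks, and the two prescriptions for $\hat{\varphi}$ on that block must agree. In the $DL(\Delta)$ case this never arises, since there the blocks form a genuine tree. I would handle it by first extending $\varphi$ over the finite region $R$ spanned by $D_1$, where agreement around any $k$-cycle already contained in $R$ is forced by $\varphi$ being a genuine isomorphism together with the rigidity of a cyclically glued chain of copies of $CP_n$, and only afterwards extending over the remaining blocks along a spanning tree of the complementary region, where each $k$-cycle is entered but not yet closed, so that the free choices supplied by homogeneity of $CP_n$ may be made consistently. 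Making the rigidity of these cyclic chains precise — equivalently, checking that the composite of the gluing isomorphisms around an $a_i$-cycle imposes no relation beyond $a_i^k = 1$, as reflected in the free-product structure of $F_{n,k}$ — is the step I expect to require the most care.
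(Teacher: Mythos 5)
Your proposal is correct in substance but organised quite differently from the paper's proof. The paper does not propagate partial maps block-by-block as in Theorem~\ref{CPWalwaysworks}; instead it first proves (Claim (a)) that the induced map $\Aut(M(n,k))\rightarrow\Aut(T_{n,k})$ is a group \emph{isomorphism}, and (Claim (b)) that any isomorphism $\varphi$ of finite connected subdigraphs induces a map on $T_{n,k}$ that respects the cycle-relation $\sim$ and hence extends to $\Aut(T_{n,k})$. Lifting that extension back and composing, the paper reduces to the case where $\varphi$ fixes every $\mathcal{A}$-class setwise, and then (Claim (c)) shows $\varphi$ is forced to be the \emph{identity} on all ``corner'' vertices (those lying on a $\Rightarrow$-directed $k$-cycle of the relevant finite union of blocks, where $x\Rightarrow y$ is the matched-pair relation in $CP_n$), while the remaining vertices may be permuted freely within their classes; extension to all of $M(n,k)$ is then easy. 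Your rigidity and monodromy concerns are real, and they are exactly what the paper's $\Rightarrow$-relation handles: every block isomorphism preserves matched pairs together with their orientation, and every vertex has a \emph{unique} $\Rightarrow$-in- and $\Rightarrow$-out-neighbour, so in your inductive scheme the entry vertex of the block closing a $k$-cycle is not a free choice at all --- its image is forced, and agreement holds automatically because the $\Rightarrow$-cycles in the target also have length exactly $k$ (this is your ``no relation beyond $a_i^k=1$'' statement, and it does hold). The one point your sketch underweights is coherence \emph{inside} the initial region $R$: an isomorphism of induced subdigraphs cannot directly see the relation $\Rightarrow$ (its witnesses $x\rightarrow z\leftarrow t\rightarrow y$ may be missing from $D_1$), so the forcing of entry/corner vertices must be extracted from induced path configurations inside $D_1$, which is precisely what the displayed configurations in the paper's Claims (b) and (c) accomplish; your deferral of this to ``rigidity requiring care'' is where most of the actual work lives. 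As for what each route buys: yours is more elementary and parallels Theorem~\ref{CPWalwaysworks} directly, using full homogeneity of $CP_n$ (correctly identified as the needed upgrade, since block intersections with the already-defined region can be disconnected), while the paper's route yields the isomorphism $\Aut(M(n,k))\cong\Aut(T_{n,k})$ as a by-product and, after the reduction to class-fixing maps, cleanly separates the forced part of $\varphi$ (corner vertices) from the free part (the rest), avoiding any spanning-tree bookkeeping.
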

\begin{proof}
Let $M = M(n,k)$, $T = T_{n,k}$ and let $\pi : EM \rightarrow VT$ map each arc $e$ of $M$ to the vertex of $T$ corresponding to the $\mathcal{A}$-class $\mathcal{A}(e)$. For any connected induced substructure $M_1$ of $M$, with at least one arc, we use $\pi(M_1)$ to denote the image of the set of arcs of $M_1$ under $\pi$. 

Note the $T$ itself is not (in general) $C$-homogeneous. Indeed, when $k \geq 4$, there are two types of induced directed $2$-arc in $T$: those that embed in a copy of the directed cycle $D_k$ in $T$, and those that do not. However, this is essentially the only obstruction to the $C$-homogeneity of $T$. More precisely, let $\sim$ be the equivalence relation on $ET$ defined by $e \sim f$ if and only if there is an induced copy of $D_k$ in $T$ containing both $e$ and $f$. So the $\sim$-classes of $ET$ are just the copies of $D_k$ in $T$. Then we say that an isomorphism $\phi: T_1 \rightarrow T_2$ between finite connected induced substructures $T_1$, $T_2$, of $T$ \emph{respects $\sim$} if for all $(a,b), (c,d) \in ET_1$ we have
\[
(a,b) \sim (c,d) \Leftrightarrow (\phi(a),\phi(b)) \sim (\phi(c),\phi(d)).
\]
It is then not hard to see that an isomorphism $\phi:T_1 \rightarrow T_2$ between finite connected induced subdigraphs $T_1$, $T_2$ of $T$ extends to an automorphism of $T$ provided $\phi$ respects $\sim$ (in the case $k \geq 4$ this is an necessary and sufficient condition for $\phi$ to extend). 

Turning our attention to $M$, every isomorphism $\varphi: M_1 \rightarrow M_2$ between connected induced subdigraphs $M_1$, $M_2$ of $M$ (where $M_1$ has at least one arc) induces, in the obvious way, an isomorphism $\hat{\varphi}: \lb \pi(M_1) \rb \rightarrow \lb \pi(M_2) \rb$ where $\lb \pi(M_1) \rb$ and $\lb \pi(M_2) \rb$ are both finite connected induced subdigraphs of $T$. 
In particular, any automorphism $\varphi \in \Aut(M)$ induces an automorphism $\hat{\varphi} \in \Aut(T)$.

\bigskip

\noindent \textbf{Claim (a).} \textit{The mapping $\hat{\;}: \Aut(M) \rightarrow \Aut(T)$, $\varphi \mapsto \hat{\varphi}$ is an isomorphism of groups.}

\bigskip

\begin{proof}[Proof of Claim (a)] This map is clearly a homomorphism. 

To see that $\hat{\;}$ is injective it suffices to observe that the only automorphism of $M$ that fixes setwise the $\mathcal{A}$-classes of $M$ is the identity map. The easiest way to see this is to consider the extra structure on $M$ coming from the fact that the reachability digraphs are isomorphic to $CP_n$. We do this by defining a binary relation $\Rightarrow$ on $VM$ by:
\[ \mbox{
$x \Rightarrow y$ if and only if there exist $z,t \in VM$ satisfying $x \rightarrow z \leftarrow t \rightarrow y$ and $x \not\rightarrow y$. 
} \]
In other words, $x \Rightarrow y$ if and only if $x$ and $y$ are both vertices of some $\mathcal{A}$-class $\mathcal{A}(e)$, $x$ and $y$ both belong to different parts of the bipartition of $\mathcal{A}(e)$, and $x$ and $y$ are unrelated by $\rightarrow$ (i.e. $x$ and $y$ are ``matched'' in the bipartite complement of perfect matching $CP_n$). Moreover, the direction of $\Rightarrow$ corresponds to the orientation of the arcs in $\mathcal{A}(e)$. In the diagram in Figure~\ref{M43} the dotted edges correspond to $\Rightarrow$-related pairs. 
We shall call a pair $(x,y)$ with $x \Rightarrow y$ and $\Rightarrow$-arc. 

From the construction of $M$ we see that $(VM, \Rightarrow)$ has the structure of countably many disjoint $\Rightarrow$-directed $k$-cycles. Moreover, if $x \Rightarrow y \Rightarrow z$, $x' \Rightarrow y' \Rightarrow z'$ and $\{ x,y,x',y' \}$ is a subset of the vertices of some $\mathcal{A}$-class, then it follows from the structure of $M$ that $\{y,z\}$ and $\{y',z'\}$ are subsets of different $\mathcal{A}$-classes. 

Now let $\alpha \in \Aut(M)$ be an automorphism that fixes setwise the $\mathcal{A}$-classes of $M$. Then, on each $\mathcal{A}$-class, $\alpha$ induces an automorphism, which is determined by some permutation of the $\Rightarrow$-arcs of the $\mathcal{A}$-class. However, if for some $\mathcal{A}$-class a non-identity permutation is induced by $\alpha$ on its $\Rightarrow$-arcs then, by the observation in the previous paragraph, the automorphism $\alpha$ does not fix the $\mathcal{A}$-classes of $M$ setwise, contradicting the choice of $\alpha$. From this it follows that the homomorphism $\hat{\;}: \Aut(M) \rightarrow \Aut(T)$ is injective, since $\hat{\alpha} = \hat{\beta}$ implies that $\alpha \beta^{-1} \in \Aut(M)$ is an automorphism fixing setwise the $\mathcal{A}$-classes of $M$, giving $\alpha \beta^{-1} = 1$.    

To see that $\hat{\;}$ is surjective, fix some $v \in VT$. Its preimage $\Delta = \phi^{-1}(v)$ in $M$ is a copy of $CP_n$ in $M$. Now consider $\Aut(T)_v$. These automorphisms are easily described. An automorphism $\alpha \in \Aut(T)_v$ is defined by choosing a permutation of the $n$ copies of $D_k$ that are adjacent to $v$, and then for each of the vertices on each such $D_k$, choosing a permutation of the $n-1$ other copies of $D_k$ attached to that vertex, and so on working our way out from the original vertex (this is analogous to considering the stabiliser of a vertex of an infinite regular tree). Correspondingly, we may consider $\Aut(M)_\Delta \leq \Aut(M)$: the automorphisms that fix $\Delta$ setwise. These are given by first choosing some permutation of the $n$ $\Rightarrow$-related pairs in $\Delta$, and then for every $\Rightarrow$-directed $k$-cycle $\mathcal{D}$ attached to $\Delta$, and for every $\mathcal{A}$-class $\Omega$ of each $\Rightarrow$-arc of $\mathcal{D}$, we choose freely some permutation of the $n-1$ $\Rightarrow$-arcs in $\Omega$ that are not in $\mathcal{D}$, and so on working our way out from $\Delta$.  
In this way we see that  $\hat{\;}: \Aut(M) \rightarrow \Aut(T)$ maps $\Aut(M)_\Delta$ onto $\Aut(T)_v$. Finally, to generate all of $\Aut(T)$, we observe that $\Aut(M)$ acts transitively on the $\mathcal{A}$-classes of $M$ and hence acts vertex transitively on $T$. We conclude that $\hat{\;}$ is surjective and thus  $\hat{\;}: \Aut(M) \rightarrow \Aut(T)$ is an isomorphism, completing the proof of the claim. \end{proof}

Our aim is to show that $M$ is $C$-homogeneous. One key idea for the proof is the following:

\bigskip

\noindent \textbf{Claim (b).} \textit{
For every isomorphism $\varphi : M_1 \rightarrow M_2$ between finite connected induced subdigraphs $M_1$, $M_2$ of $M$ the induced isomorphism $\hat{\varphi}: \lb \pi(M_1) \rb \rightarrow \lb \pi(M_2) \rb$ is an isomorphism between finite connected induced subdigraphs of $T$ that respects $\sim$, and therefore $\hat{\varphi}$ extends to an automorphism of $T$.
}

\bigskip

\begin{proof}[Proof of Claim (b)]
Let $M_1$ be a finite connected induced subdigraph of $M$ and let $(e,f)$ and $(e',f')$ be $2$-arcs in $M_1$, where $e,f,e',f' \in EM_1$. Then $(\pi(e),\pi(f)) \sim (\pi(e'), \pi(f'))$ in $T$ if and only if there is an \emph{induced} subdigraph of $M_1$ of one of the following two forms:
\begin{equation}
\label{eqn_path1}
\circ \xrightarrow{e}  \circ \xrightarrow{f}  \circ \leftarrow 
\circ \rightarrow      \circ \rightarrow      \circ \leftarrow 
\circ \cdots
\circ \rightarrow      \circ \rightarrow      \circ \leftarrow 
\circ \xrightarrow{e'} \circ \xrightarrow{f'} \circ
\end{equation}
or 
\begin{equation}
\label{eqn_path2}
\circ \xrightarrow{e'}  \circ \xrightarrow{f'}  \circ \leftarrow 
\circ \rightarrow      \circ \rightarrow      \circ \leftarrow 
\circ \cdots
\circ \rightarrow      \circ \rightarrow      \circ \leftarrow 
\circ \xrightarrow{e} \circ \xrightarrow{f} \circ.
\end{equation}   
This follows from inspection of the structure of $M$, and in particular relies on the fact that the reachability digraphs are isomorphic to $CP_n$ and $M_1$ is connected.   
The statement in the claim is then an immediate corollary. 
\end{proof}

Now we shall use the above observations to show that $M$ is $C$-homogeneous. Let $\varphi: M_1 \rightarrow M_2$ be an isomorphism between finite connected induced subdigraphs of $M$. 
Then, by Claim (b), $\hat{\varphi}: \lb \pi(M_1) \rb \rightarrow \lb \pi(M_2) \rb$  extends to some automorphism $\theta$ of $T$ (note that in general there will be more than one possible choice for $\theta$). Let $\sigma \in \Aut(M)$ satisfy $\hat{\sigma} = \theta$. This is possible by Claim (a). 
Then we see that $\varphi$ extends to an automorphism of $M$ if and only if  
$\varphi' = \varphi \circ (\sigma^{-1}\rrestriction_{M_2})$ extends to an automorphism of $M$. Therefore we may suppose without loss of generality (by replacing $\varphi$ by $\varphi'$) that for every arc $e$ in $EM_1$ we have:
\begin{equation}\label{eqn_double_hash}
\pi( \varphi(e) ) = \pi( e ).
\end{equation}  
Next observe that for every arc $f$ in $T$ there is a unique copy of the directed cycle $D_k$ in $T$ to which $f$ belongs. Therefore for every finite connected induced subdigraph $Z$ of $T$ there is a uniquely determined extension $\overline{Z}$ of $Z$ obtained by adding in all arcs $\sim$-related to arcs of $Z$ (i.e. we close under $\sim$). Clearly $\overline{Z}$ is then a connected union of finitely many copies of $D_k$ in $T$. 

From assumption \eqref{eqn_double_hash} and Claim (b) it follows that $\overline{\pi(M_1)} = \overline{\pi(M_2)}$. Now define $X = \pi^{-1}(\overline{\pi(M_1)}) = \pi^{-1}(\overline{\pi(M_2)})$, which is a preimage of a connected union of finitely many directed cycles of $T$. So, $M_1$ and $M_2$ are isomorphic finite connected substructures of $X$ and by \eqref{eqn_double_hash} the isomorphism $\varphi: M_1 \rightarrow M_2$ fixes setwise the $\mathcal{A}$-classes of $X$. We shall now prove that $\varphi$ extends to an automorphism of $X$. 

There are two types of vertices in $M_1$. We call a vertex $v$ of $M_1$ a \emph{corner vertex} if and only if $v$ belongs to a $\Rightarrow$-directed $k$-cycle in $X =  \pi^{-1}(\overline{\pi(M_1)})$. Similarly we define corner vertices of $M_2$. 

\bigskip

\noindent \textbf{Claim (c).} \textit{
For all $v \in VM_1$, $v$ is a corner vertex of $M_1$ if and only if $\varphi(v)$ is a corner vertex of $M_2$, in which case $\varphi(v) = v$. 
}

\bigskip

\begin{proof}[Proof of Claim (c)]
A vertex $w$ of $M_1$ is a corner vertex if and only if either $w$ is the middle vertex of a directed $2$-arc of $M_1$, or  there is an induced subdigraph of $M_1$ of one of the following two forms:
\begin{equation}
\label{eqn_path1'}
\circ \xrightarrow{}  \circ \xrightarrow{}  \circ \leftarrow 
\circ \rightarrow  w    
\end{equation}
or 
\begin{equation}
\label{eqn_path2'}
w \xrightarrow{}  \circ \leftarrow 
\circ \rightarrow      \circ \rightarrow      \circ. 
\end{equation}   
To see this, let $w$ be an arbitrary corner vertex of $M_1$. 
If $w$ is the middle vertex of a directed $2$-arc then clearly $w$ is a corner vertex, so suppose otherwise. 
Take a $\Rightarrow$-directed $k$-cycle in $X$ containing $w$. Then since this $\Rightarrow$-directed $k$-cycle is in $X$, some directed $2$-arc, with middle vertex $v$ say, of $M_1$ must embed in the union of the $\mathcal{A}$-classes traversed by the $\Rightarrow$-directed $k$-cycle. But $M_1$ is connected, and taking a path in $M_1$ from the directed $2$-arc to $w$, removing unnecessary edges and applying the fact that $\Delta \cong CP_n$, we conclude that there is an induced subdigraph of $M_1$ of one of the following two forms:     
\begin{equation}
\label{eqn_path1''}
\circ \xrightarrow{}  v \xrightarrow{}  \circ \leftarrow 
\circ \rightarrow     
\circ \rightarrow      \circ \leftarrow 
\circ \cdots
\circ \rightarrow      \circ \rightarrow      \circ \leftarrow 
\circ \xrightarrow{} w
\end{equation}
or 
\begin{equation}
\label{eqn_path2''}
w \xrightarrow{}  \circ \leftarrow 
\circ \rightarrow      \circ \rightarrow      \circ
\leftarrow 
\circ \cdots
\circ \rightarrow      \circ \rightarrow      \circ \leftarrow 
\circ \xrightarrow{} v  \rightarrow \circ,
\end{equation}   
then cutting down to $4$-element substructures proves the result. Similarly the corner vertices of $M_2$ may be described. From this description it is clear that, since $\varphi$ is an isomorphism, $v$ is a corner vertex of $M_1$ if and only if $\varphi(v)$ is a corner vertex of $M_2$. 

Now let $v \in VM_1$ be a corner vertex of $M_1$. Recall that $\varphi: M_1 \rightarrow M_2$ is assumed to fix setwise the $\mathcal{A}$-classes of $X$. 
It follows that:

(i) if $v$ is the middle vertex of a $2$-arc then we must have $\varphi(v) = v$, since in $M$ every pair of $2$-arcs between a given ordered pair of $\mathcal{A}$-classes in $M$ have the same middle vertex; 

(ii) since every vertex $w$ in $M$ has a unique $\Rightarrow$-outneighbour and unique $\Rightarrow$-inneighbour it follows that once we know $\varphi(v) = v$ we must have $\varphi(w) = w$, where $w$ and $v$ are as in \eqref{eqn_path1''} or \eqref{eqn_path2''}. 

We conclude that $\varphi(w) = w$ for every corner vertex $w$ of $M_1$. \end{proof}

Then the remaining non-corner vertices may be permuted at will within their respective $\mathcal{A}$-classes, and in this way $\varphi$ is determined. But then $\varphi$ is clearly the restriction of some automorphism of $X$ fixing setwise the $\mathcal{A}$-classes of $X$. This completes the proof that $\varphi$ extends to an automorphism of $X$. 

Finally, using the fact that $X$ is the preimage under $\pi$ of a connected union of finitely many copies of $D_k$ in $T$, it is not difficult to see that every automorphism of $X$ extends to an automorphism of $M$. 

We conclude that $M$ is $C$-homogeneous. \end{proof}

The above construction provides 
a new family of locally-finite highly arc-transitive digraphs with no homomorphism onto $Z$. 

\begin{corol}
The digraph $M(n,k)$ $(n \geq 3, k \geq 2)$ is highly arc-transitive with in- and out-degree equal to $n-1$, and does not have property $Z$.
\end{corol}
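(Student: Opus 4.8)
The statement bundles together three claims of very different depths, and I would dispatch them in increasing order of difficulty, leaving the failure of property $Z$ for last as the real content.

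The degree count is bookkeeping. After the contraction in the construction, each vertex of $M = M(n,k)$ is the image of one vertex from each level of the double cover, and so lies in exactly two reachability classes: it is a source of one copy of $CP_n$ and a sink of the other. Since every vertex of $CP_n$ has degree $n-1$, all its out-arcs lie in the class where it is a source and all its in-arcs in the class where it is a sink, giving $d^+ = d^- = n-1$, exactly as recorded in the construction. For high arc-transitivity I would combine the preceding proposition, which establishes that $M$ is $C$-homogeneous, with Theorem~\ref{bigtheorem}(i). To invoke the latter I must check that $M$ is connected, locally-finite, triangle-free, and has more than one end. Connectivity and local-finiteness are immediate, and $M$ has infinitely many ends because it is assembled in the tree-like pattern of $\mathcal{R}(M) \cong T_{n,k}$, the Cayley graph of a free product of $n \geq 3$ nontrivial finite cyclic groups, which has infinitely many ends.

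Triangle-freeness is the one point here requiring a genuine (if short) argument. Since each reachability class is a bipartite copy of $CP_n$, the three edges of a putative triangle cannot all lie in one class; and if two of them lay in a single class, the third would join two vertices in the same part of that $CP_n$, which is impossible because distinct classes overlap only in matched pairs (one vertex in each part). The remaining possibility, that the three edges lie in three pairwise-intersecting classes, forces a triangle in $\mathcal{R}(M) \cong T_{n,k}$; such triangles occur only when $k=3$, as the directed $3$-cycles, and for these the two relevant shared vertices of any one class form a matched pair of that $CP_n$ and hence are non-adjacent. In every case no triangle survives. Granting this, Theorem~\ref{bigtheorem}(i) yields that $M$ is highly arc-transitive (and incidentally that $M$ has no directed cycles, via Lemma~\ref{biglemma}(iv)).

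The substantive part is the failure of property $Z$, which I would prove by contradiction. Suppose $f \colon M \to Z$ were a homomorphism onto the two-way infinite directed line, so $f(y) = f(x)+1$ whenever $x \to y$ in $M$. Fix a reachability class $\mathcal{C} \cong CP_n$ with bipartition $X(\mathcal{C}) \cup Y(\mathcal{C})$, all arcs directed from $X(\mathcal{C})$ to $Y(\mathcal{C})$. Because $n \geq 3$ the graph $CP_n$ is connected, so $f$ is constant, say equal to $c_{\mathcal{C}}$, on $X(\mathcal{C})$ and equal to $c_{\mathcal{C}}+1$ on $Y(\mathcal{C})$. Now take a directed $k$-cycle $\mathcal{C}_1 \to \mathcal{C}_2 \to \cdots \to \mathcal{C}_k \to \mathcal{C}_1$ in $\mathcal{R}(M) \cong T_{n,k}$, which exists since each generator $a_i$ of $F_{n,k}$ has order $k$. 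By the definition of the reachability intersection digraph each arc $\mathcal{C}_i \to \mathcal{C}_{i+1}$ arises from a $2$-arc $x \to w_i \to z$ with $(x,w_i) \in \mathcal{C}_i$ and $(w_i,z) \in \mathcal{C}_{i+1}$, so $w_i \in Y(\mathcal{C}_i) \cap X(\mathcal{C}_{i+1})$. Evaluating $f$ at $w_i$ in the two classes gives $c_{\mathcal{C}_i}+1 = f(w_i) = c_{\mathcal{C}_{i+1}}$, so $c_{\mathcal{C}_{i+1}} = c_{\mathcal{C}_i}+1$ for each $i$ (indices mod $k$). Going once around the cycle forces $c_{\mathcal{C}_1} = c_{\mathcal{C}_1} + k$, whence $k = 0$, contradicting $k \geq 2$. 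Hence no such $f$ exists and $M(n,k)$ does not have property $Z$.

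Everything except the property-$Z$ step is either bookkeeping or an immediate appeal to Theorem~\ref{bigtheorem}; the one place I expect to have to be careful is the triangle-free verification that licenses that appeal, since it depends on unwinding the gluing rule for how copies of $CP_n$ overlap. The conceptual heart, though, is the valuation argument around a directed $k$-cycle of $\mathcal{R}(M)$: it is precisely the presence of these directed $k$-cycles in the reachability intersection digraph $T_{n,k}$ — absent from the $DL(\Delta)$ examples — that obstructs a homomorphism onto $Z$ and makes $M(n,k)$ a new family of highly arc-transitive digraphs without property $Z$.
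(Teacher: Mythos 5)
Your proof is correct and follows the route the paper intends: the corollary is stated there as an immediate consequence of the preceding Proposition ($C$-homogeneity of $M(n,k)$) together with Theorem~\ref{bigtheorem}(i), and your valuation argument around a directed $k$-cycle of $\mathcal{R}(M(n,k))$ is precisely the general form of the unbalanced-cycle obstruction the paper sketches for $Y_n$ (each passage through a shared vertex, head-side of one copy of $CP_n$ and tail-side of the next, forces the would-be homomorphism to increase by $1$, giving $k=0$). Your explicit check of triangle-freeness --- including the $k=3$ case, where the two attachment vertices of a class on a directed $3$-cycle form a matched, hence non-adjacent, pair of its $CP_n$ --- fills in a hypothesis of Theorem~\ref{bigtheorem}(i) that the paper leaves implicit, and is verified correctly.
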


We finish this section by describing a different family of $C$-homogeneous digraphs, that are constructed using the line digraph operation. 

\begin{defn}[Line digraph] The line digraph $L(D)$ of a digraph $D$ is defined to be the digraph with vertex set $ED$ and (directed) edges of the form $(e,e')$, where the arcs $e$, $e'$ give rise to a $2$-arc in $D$. 
\end{defn}  

It was proved in \cite[Lemma~4.1(a)]{Cameron2} that high-arc-transitivity is a property that is preserved when taking the line digraph of a digraph. In general, connected-homogeneity is not preserved under taking line digraphs. The digraph $J(2)$, which is illustrated in Figure~\ref{J(2)} and will be formally defined below in Section~\ref{sec_2ended}, is $C$-homogeneous.
A straightforward check shows that  the  line graph $L(J(2))$ of the digraph $J(2)$, illustrated in Figure~\ref{J(2)}, is not $C$-homogeneous.  

Indeed, the digraph $L(J(2))$ is easily seen to be a connected locally-finite digraph with two ends. So if $L(J(2))$ were $C$-homogeneous then, by Theorem~\ref{2_ended_classification} below, this would imply that $L(J(2)) \cong J(r)$ for some $r$ (where $J(r)$ is defined below). This is clearly not the case, since the reachability digraph of $L(J(2))$ is a bipartite digraph with bipartitions each of size $4$, while the in- and out-degrees of the vertices of $L(J(2))$ are only equal to $2$. Therefore $L(J(2))$ is not $C$-homogeneous.    

However, for $m \geq 2$ it may be verified that the line graph $L(DL(C_{2m}))$ is $C$-homogeneous, where $C_{2m}$ denotes the alternating cycle with $2m$ vertices. Note that $L(DL(C_{2m}))$ has reachability digraph isomorphic to $K_{2,2}$. The digraphs $L(DL(C_{2m}))$ for $m \geq 2$ do have property $Z$.

\section{$2$-ended connected-homogeneous digraphs}
\label{sec_2ended}

We begin by describing a family of examples. 

\begin{defn}
For $r \in \mathbb{N}$ let $J(r)$ denote the digraph with vertex set $\mathbb{Z} \times X$, where $X = \{ 1, \ldots, r \}$, and with arcs $(i,x) \rightarrow (i+1,y)$ where $i \in \mathbb{Z}$ and $x,y \in X$. 
\end{defn}

\begin{figure}
\[
\xymatrix{
\dedge{r}  &  \node \rulab{} \arcc{r} \arcc{dr} & \node \rulab{} \arcc{r} \arcc{dr} & \node \rulab{} \arcc{r} \arcc{dr} & \node  \rulab{} \dedge{r} & \\
\dedge{r} &  \node \rdlab{} \arcc{r} \arcc{ur} &  \node \rdlab{} \arcc{r} \arcc{ur} &  \node \rdlab{} \arcc{r} \arcc{ur} &  \node  \rdlab{} \dedge{r} &
}
\]
\caption{The digraph $J(2)$.} \label{J(2)}
\end{figure}
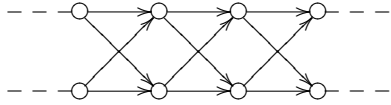

The digraph $J(2)$ is illustrated in Figure~\ref{J(2)}.
It is easy to see that for every $r \in \mathbb{N}$ the digraph $J(r)$
has two ends and is $C$-homogeneous. Indeed, the group generated by
the translates, mapping $(i,x) \mapsto (i+k,x)$ for $k \in
\mathbb{Z}$, and the transpositions, interchanging  $(i,x)$ with
$(i,y)$ and fixing every other vertex, acts in a $C$-homogeneous way
on the digraph $J(r)$.    

The main result of this section is the following. 

\begin{thm}\label{2_ended_classification}
Let $D$ be a connected locally-finite digraph with exactly two
ends. Then $D$ is $C$-homogeneous if and only if it is isomorphic to
$J(r)$ for some $r \in \mathbb{N}$.  
\end{thm}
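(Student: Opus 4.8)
The reverse implication is exactly the observation recorded immediately after the definition of $J(r)$, so all the work lies in the forward direction: assuming $D$ is $C$-homogeneous with exactly two ends, the plan is to recover the leveled-product structure of some $J(r)$. I would first extract as much transitivity as possible, then feed $D$ into the structure-tree machinery of Section~3, and finally read off the global shape from the fact that the structure tree is a line.

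\emph{Transitivity and triangle-freeness.} $C$-homogeneity immediately gives vertex- and arc-transitivity, and by extending partial isomorphisms of $\langle \{v\} \cup U \rangle$ (which are automatically connected, since $v$ is joined to everything in $D(v)$) it shows that $G_v$ is transitive on $D^+(v)$ and on $D^-(v)$. If $D$ had out-degree or in-degree $1$, then the short argument used in the triangle-free reduction preceding Theorem~\ref{bigtheorem} forces $D$ to be the directed line $J(1)$, or a branching directed tree, which has infinitely many ends; so I may assume both degrees are at least $2$. The crucial preliminary step is that $D$ is \emph{triangle-free}. A directed triangle is a directed cycle, and a transitive triangle $a \rightarrow b \rightarrow c$ with $a \rightarrow c$ produces directed paths from $a$ to $c$ of lengths $1$ and $2$; once $D$ is known to be $2$-arc-transitive both are excluded by Lemma~\ref{biglemma}(iv). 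Since triangle-freeness is itself what makes every $2$-arc an induced directed path, and hence makes $D$ $2$-arc-transitive, these two facts must be disentangled: I expect to argue that any triangle would force two non-isomorphic types of $2$-arc to coexist, and then use $C$-homogeneity together with the two-ended hypothesis to derive a contradiction. This circular-looking interplay is the first genuine obstacle. Once $D$ is triangle-free, every $2$-arc is an induced directed path, so $D$ is $2$-arc-transitive, and moreover $G_v$ acts $2$-transitively on each of $D^+(v)$ and $D^-(v)$; thus Corollaries~\ref{notuniversal2_corol} and~\ref{2trans} apply and $\Delta(D)$ is bipartite.

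\emph{Leveling from the two ends.} I now apply the Section~3 results with $G = \Aut(D)$. Because $D$ has exactly two ends, the structure tree $T$ is a line, and by Lemma~\ref{just2ends} the structure map $\phi$ is constant on each set $D^r(u)$. Combined with Lemma~\ref{biglemma}(ii) this yields a level function $\ell : VD \rightarrow \mathbb{Z}$ (so $D$ has property $Z$) for which every arc increases $\ell$ by exactly $1$ and $D^r(u) \subseteq \ell^{-1}(\ell(u)+r)$; write $V_m = \ell^{-1}(m)$. Vertex-transitivity makes all the levels $V_m$ look alike, and I would use two-endedness together with local finiteness to show that each $V_m$ is finite of some constant size $r$. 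Every arc runs from $V_m$ to $V_{m+1}$, and since an alternating walk never leaves a pair of consecutive levels, each reachability class $\langle \mathcal{A}(e) \rangle$ sits inside some $V_m \cup V_{m+1}$; thus $\Delta(D)$ is one of the four bipartite graphs listed in Theorem~\ref{bigtheorem}(iii).

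\emph{Identifying $D$ with $J(r)$.} It remains to pin down $\Delta(D)$ and the way consecutive reachability classes are glued. The aim is to show that two-endedness forces the bipartite digraph between each pair of consecutive levels to be \emph{connected}, so that there is a single $\mathcal{A}$-class between $V_m$ and $V_{m+1}$ and the reachability intersection digraph is itself a line. Among the candidates of Theorem~\ref{bigtheorem}(iii) this rules out the semiregular tree $T_{a,b}$ and, more delicately, the even cycle $C_{2k}$ ($k \geq 3$) and the complement of a perfect matching $CP_n$ ($n \geq 3$), whose $DL(\Delta)$- and $M(n,k)$-type realisations branch and therefore have more than two ends. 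This leaves $\Delta(D) \cong K_{r,r}$, so the connection between consecutive levels is complete bipartite, the two sides being whole levels of equal size $r$ (forced by transitivity). Fixing bijections $V_m \rightarrow \{1,\dots,r\}$ then identifies $D$ with $J(r)$, since in a complete-bipartite leveled digraph $(i,x) \rightarrow (i+1,y)$ for all $x,y$. The main obstacle is exactly this last step: separating the two-ended gluing (a line of reachability classes sharing full levels) from the infinitely-ended gluings that share the \emph{same} reachability graph, and in particular excluding the $C_{2k}$ and $CP_n$ possibilities using only that $D$ has two ends. Here the fact that $T$ is a line, obtained via Dunwoody's Theorem~\ref{DunwoodysThm}, is the essential leverage.
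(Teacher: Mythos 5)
Your outline tracks the paper's strategy at a high level (level function from the line structure tree, $\Delta(D)$ bipartite and finite, force $K_{r,r}$ between full levels), but at each of the decisive points you name the obstacle and then stop, and in one place the reasoning you do offer would fail. First, triangle-freeness: you correctly spot the circularity (your proposed exclusion of triangles goes through Lemma~\ref{biglemma}(iv), which presupposes $2$-arc-transitivity, which in turn you obtain from triangle-freeness), but your proposed escape --- ``any triangle would force two non-isomorphic types of $2$-arc to coexist'' --- is left as a hope, not an argument. The paper avoids the circle entirely and cheaply: $C$-homogeneity gives arc-transitivity, hence a single constant $N$ with $d_T(\phi(x),\phi(y))=N$ for all adjacent $x,y$; since the structure tree of a two-ended graph is a line, a triangle would give three points on a line pairwise at distance $N$, which is impossible. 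No $2$-arc-transitivity is needed for this step, and only after it does the machinery of Section~3 become available.

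Second, and this is the genuine logical gap: your exclusion of $C_{2k}$ ($k\geq 3$) and $CP_n$ ($n\geq 3$) as reachability graphs rests on the remark that their $DL(\Delta)$- and $M(n,k)$-type realisations branch and therefore have more than two ends. That only shows the \emph{known constructions} with these reachability graphs are not two-ended; it says nothing about an arbitrary two-ended $C$-homogeneous digraph, which need not be one of those constructions --- at this stage nothing is known about how the $\mathcal{A}$-classes are glued. The paper supplies real arguments here: it first proves, using two-endedness (removing a finite fibre leaves at most two infinite components, and ancestors/descendants are infinite), that the half-spaces $\bigcup_{i\leq j-1}\theta^{-1}(i)$ and $\bigcup_{i\geq j+1}\theta^{-1}(i)$ are connected, and then uses this connectivity to build finite connected configurations whose $C$-homogeneous extensions yield contradictions: the ``red lines'' of matched non-adjacent pairs for $CP_n$, and an automorphism fixing a connected set $Q\cup\{a\}$ while swapping the two out-neighbours $b,c$ of $a$ for $C_m$, $m\geq 8$. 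Finally, even once $\Delta(D)\cong K_{l,l}$ is known, you assert rather than prove that consecutive levels span a single $\mathcal{A}$-class; the paper closes this with a counting argument: if $\langle\theta^{-1}(0)\cup\theta^{-1}(1)\rangle$ were disconnected, $C$-homogeneity (again via a connected half-space) would force the $l+1$ vertices of $\{b\}\cup(A\cap\theta^{-1}(1))$ into one side of a single component of the next layer, but each such component is a $K_{l,l}$. Until these three steps are carried out --- and your sketch of the finiteness of the levels would likewise need the paper's finite-orbit argument for $G_v$ --- the proposal is an accurate road map but not a proof.
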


Before proving Theorem~\ref{2_ended_classification} we first 
need three lemmas. Let $Z$ denote the digraph with the set of integers as vertex set
and arcs $i\rightarrow i+1$.

\begin{lem} 
Let $D$ be a connected locally-finite $C$-homogeneous digraph
with exactly two ends. Then $D$ is triangle-free and there is a
surjective digraph homomorphism $\theta: D \rightarrow Z$. 
Moreover, the fibres $\theta^{-1}(i)$ of the homomorphism $\theta$ are finite for all $i \in \mathbb{Z}$.  
\end{lem}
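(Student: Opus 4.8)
The plan is to extract the transitivity consequences of $C$-homogeneity first, then prove triangle-freeness (the substantial point), and finally read off the homomorphism and the finiteness of its fibres from the structure-tree machinery of Sections~2--3.

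\emph{First reductions.} Since a single vertex and a single arc are connected subdigraphs, $C$-homogeneity immediately gives that $D$ is vertex-transitive and arc-transitive, and that $G_v = \Aut(D)_v$ acts transitively on $D^+(v)$ and on $D^-(v)$ for every $v$ (any out-arc at $v$ can be carried to any other by an automorphism fixing $v$). As $D$ has exactly two ends, the structure tree $T = T(E)$ has two ends and, $G$ acting transitively, no leaves, so $T$ is a line; I fix an identification of $VT$ with $\mathbb{Z}$ and write $h(v)$ for the signed position of $\phi(v)$ along this line. By arc-transitivity the distance $d_T(\phi(x),\phi(y))$ takes a fixed value $N\ge 1$ on every arc $x\to y$, so $|h(y)-h(x)| = N$ on every arc.

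\emph{Triangle-freeness, the crux.} I would first reduce the presence of a triangle to the presence of a directed cycle. Consider the induced out-neighbourhood digraph $\Omega^+ = \lb D^+(v)\rb$; it is finite by local-finiteness, and adjoining $v$ to any connected subconfiguration shows that $\Omega^+$ is itself $C$-homogeneous and $G_v$-vertex-transitive. If $D$ embeds a transitive triangle then $\Omega^+$ contains an arc, whereupon $G_v$-vertex-transitivity forces every vertex of $\Omega^+$ to have out-degree at least $1$ in $\Omega^+$; a finite digraph with all out-degrees positive contains a directed cycle, giving a directed cycle of $D$. A directed $3$-cycle is already a directed cycle, and the mixed configuration coming from a directed triangle likewise yields a directed cycle through $v$. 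So it suffices to show a two-ended $C$-homogeneous $D$ has no directed cycle. For this I would show the arc-orientation is consistent with the line $T$: since $\phi(D^+(v))$ and $\phi(D^-(v))$ sit at distance $N$ from $\phi(v)$, each lies in $\{h(v)+N,\,h(v)-N\}$, and using the $G_v$-transitivity on $D^{\pm}(v)$ together with a Seifter-type cut condition (in the spirit of Lemma~\ref{biglemma}(i), re-derived from the transitivity on directed paths of length two, which is available before triangle-freeness is known) one sees $\phi(D^+(v))$ is a single vertex on one side and $\phi(D^-(v))$ a single vertex on the other. Propagating this through the connected graph $D$ orients $T$ so that every arc raises $h$ by exactly $N$; a directed cycle is then a closed walk of total $h$-change $0$ with every step contributing $+N$, a contradiction. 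Hence $D$ has no directed cycle and, by the reduction, is triangle-free. \textbf{I expect the main obstacle to be excluding an orientation-reversing (end-swapping) automorphism of $T$} and establishing the above single-sidedness without yet having $2$-arc-transitivity: this is exactly the point where the interaction between the arc-orientation and the two-ended structure must be exploited.

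\emph{The homomorphism and its fibres.} Once $D$ is triangle-free, every directed $2$-arc induces a directed path, so $C$-homogeneity upgrades to $2$-arc-transitivity and the results of Section~3 apply: Lemma~\ref{biglemma}(iv) gives that there are no directed cycles and that all directed paths between two vertices have the same length, while Lemma~\ref{just2ends} gives that $\phi$ is constant on each $D^r(u)$. With $T$ oriented as above, fix a base vertex $u_0$ and set $\theta(v) = (h(v)-h(u_0))/N$. The $h$-displacement along any walk is an integer multiple of $N$, so $\theta$ is $\mathbb{Z}$-valued, and since every arc raises $h$ by $N$ it raises $\theta$ by $1$; thus $\theta:D\to Z$ is a digraph homomorphism. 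It is surjective because $D$ has in- and out-degree at least $1$ and no directed cycles, hence contains a two-way infinite directed line on which $\theta$ restricts to a bijection onto $\mathbb{Z}$. Finally, $\theta^{-1}(i)\subseteq \phi^{-1}(t_i)$ where $t_i$ is the single vertex of $T$ at position $iN$; if this preimage were infinite it would (being locally-finite with finite co-boundary) contain a ray and hence an end of $D$ mapping under $\Phi$ to a vertex of $T$, contradicting the fact that the two ends of $D$ are precisely the $\Phi$-preimages of the two ends of $T$. Therefore every fibre $\theta^{-1}(i)$ is finite.
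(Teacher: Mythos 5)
The gap is exactly the one you flagged yourself, and it is not closable along the route you sketch, so the crux of your proposal --- triangle-freeness --- remains unproved. Your plan needs the Seifter-type cut condition of Lemma~\ref{biglemma}(i) (equivalently, the single-sidedness of $\phi(D^+(v))$ and $\phi(D^-(v))$ on the line $T$) \emph{before} triangle-freeness is known, and you propose to re-derive it "from the transitivity on directed paths of length two". But $C$-homogeneity only yields transitivity on each isomorphism type of induced $2$-arc separately: a $2$-arc whose endpoints are adjacent (i.e.\ one lying in a triangle) is a different induced substructure from an induced directed path, so the $2$-arcs need not form a single orbit, and the proof of Lemma~\ref{biglemma}(i) in \cite{Seifter1} genuinely uses transitivity on \emph{all} $2$-arcs. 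Moreover, the conclusion itself is not a consequence of $C$-homogeneity: the digraphs $T(r)$ of Section~7 are connected, locally-finite, $C$-homogeneous and contain triangles, and there a $D$-cut given by a branch at a vertex $v$ slices a directed triangle $v\to va_i\to va_i^2\to v$ so that $(va_i^2,\,v,\,va_i)$ is a $2$-arc with both endpoints in $e$ and midpoint in $e^*$. So any derivation of the cut condition must already exploit triangle-freeness or two-endedness in an essential way, and you give no such argument; as written your proposal is circular (triangles are excluded via no-directed-cycles, which needs the consistent orientation of $T$, which needs the cut condition, which needs $2$-arc-transitivity, which needs triangle-freeness). The same unproven orientation claim underlies your construction of $\theta$, so both halves of the proof rest on the missing step. (Your fibre-finiteness sketch is also loose --- an infinite set of vertices does not literally "contain a ray"; one must pass to an infinite component of a set with finite co-boundary --- but that part is repairable.)

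What makes the lemma easy in the paper is a two-line argument using precisely the tools you had already set up, with no orientation needed: since $T$ is a line and every pair of adjacent vertices of $D$ has $\phi$-images at distance exactly $N\geq 1$, a triangle $x,y,z$ would give three vertices of a line pairwise at distance $N$, which is impossible. (Your reduction of triangles to directed cycles via the finite digraph $\langle D^+(v)\rangle$ is correct but unnecessary.) With triangle-freeness in hand, Theorem~\ref{bigtheorem}(i) gives high-arc-transitivity, and the paper then obtains $\theta$ not by orienting $T$ but by showing the fibres of $\phi$ are finite --- the stabiliser $G_v$ of $v\in VT$ preserves the finite edge set $\delta f\cup\delta f'$, hence has only finite orbits on $VD$ --- so that $|D^j(u)|\leq K$ by Lemma~\ref{just2ends}, the out-spread of $D$ equals $1$, and property $Z$ follows from \cite[Theorem~3.6]{Cameron2}; finiteness of the $\theta$-fibres then follows from an induction on turning points showing $\theta(u)=\theta(v)$ implies $\phi(u)=\phi(v)$. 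If you want to keep your direct construction of $\theta$ from the position function $h$, it can be made to work, but only \emph{after} triangle-freeness is established, when Lemma~\ref{biglemma}(ii) and Lemma~\ref{just2ends} legitimise the propagation of the orientation; in that order your argument essentially reproves the paper's turning-point induction rather than replacing it.
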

\begin{proof}
Let $e_0 \subseteq VD$ be a $D$-cut, $E = Ge_0 \cup G
{e_0}^*$ the associated tree set where $G = \Aut(D)$, and  $T = T(E)$  the
structure tree. Let $\phi : VD \rightarrow VT$ be the structure
mapping.  The structure tree $T$ has just two ends and has no leaves,
and is therefore a line.  There is a positive integer $N$ such that whenever $x$
and $y$ are related vertices in $D$ then $d_T(\phi(x), \phi(y))=N$.
If the vertices $x, y, z$ form a triangle in $D$ then $\phi(x),
\phi(y)$ and $\phi(z)$ are vertices in the line $T$ such that the distance
between any two of them is $N$.  This is clearly impossible so 
$D$ cannot contain any triangles.  By Theorem~\ref{bigtheorem}(i) the
digraph $D$ is highly arc-transitive.  

It is immediate from the definition of $E$ that the group $G=\aut(D)$ acts on $T$ and is transitive on the undirected 
edges in $T$, by which we mean for all $e,f \in E$ either $e$ and $f$ belong to the same $G$-orbit, or $e$ and $f^*$ belong to the same $G$-orbit. 
Let $v \in VT$. In the set of arcs of $T$ there are only two arcs, say
$f$ and  $f'$,  that have $v$ as a terminal vertex.  Either $f$ is fixed by
$G_v$ or the orbit of $f$ under $G_v$ has just two elements $\{f, f'\}$.  This
means that the finite set $\delta f\cup \delta f'$ 
(here both the cuts $f$ and $f'$ are being viewed as subsets of $VD$) 
of arcs in $D$ is
invariant under $G_v$ (where $\delta f$ denotes the co-boundary of $f$ as defined in Section~2).  Hence $G_v$ has a finite orbit on the arcs of
$D$ and thus only finite orbits on the vertex set of $D$.  The
fiber $\phi^{-1}(v)$ is one of the orbits of $G_v$ and thus is finite.
Because $G$ acts transitively on $\im{\phi}$ and the action of $G$ on
$D$ is covariant with $\phi$  we see that all the
fibers of $\phi$ have the same finite number of elements, say $K$.  

Fix a vertex $u$ in $D$.  Let $p_j$ denote the number of vertices in
$D^j(u)$ (for $j \in \mathbb{Z}$).  By Lemma~\ref{just2ends} the map $\phi$ is constant on
$D^j(u)$ and thus $p_j=|D^j(u)|\leq K$.  Hence the out-spread of $D$,
defined as $\limsup_{j\rightarrow \infty}(p_j)^{1/j}$, is equal to 1.
By \cite[Theorem 3.6]{Cameron2} the digraph $D$ has property $Z$, that
is to say there is a surjective digraph homomorphism $\theta:D\rightarrow Z$.

The last part essentially follows from Lemma~\ref{just2ends} and its proof. 
Indeed, we claim that for all $u,v \in VD$, if $\theta(u) = \theta(v)$ then $\phi(u) = \phi(v)$. Then, since the fibres of $\phi$ are finite it will follow that the fibres of $\theta$ are also finite. So, suppose that $u,v \in \theta^{-1}(i)$ for some $i \in \mathbb{Z}$. We claim that $\phi(u) = \phi(v)$. Since $D$ is connected we can choose an undirected path $\pi$ in $D$ from $u$ to $v$. Let $u = u_0, u_1, \ldots, u_k=v$ be the vertices of the path $\pi$. We call a vertex $u_j$ in the path $\pi$ a turning point if either $u_{j-1} \rightarrow u_j \leftarrow u_{j+1}$ or $u_{j-1} \leftarrow u_j \rightarrow u_{j+1}$. We shall prove by induction on the number of turning points in $\pi$ that $\phi(u) = \phi(v)$. If $\pi$ has just one turning point then $\phi(u) = \phi(v)$ by Lemma~\ref{just2ends}. For the induction step, if there is a vertex $u_l \in \theta^{-1}(i)$ with $0<l<k$ then applying induction to each of the paths $u = u_0, u_1, \ldots, u_l$ and $u_l, \ldots, u_k = v$ we deduce $\phi(u) = \phi(u_l) = \phi(v)$, as required. Otherwise, without loss of generality we may suppose that the path $\pi$ is contained in $\cup_{m \leq i} \theta^{-1}(m)$. In this case let $u_{j_2}$ be the second turning point of the path $\pi$ and choose a directed path $\pi'$ from $u_{j_2}$ to some vertex $w \in \theta^{-1}(i)$. Such a path $\pi'$  exists since $\theta$ is a homomorphism, and every vertex in $D$ has outdegree at least one. Then we may apply induction to the paths $u=u_0, \ldots, u_{j_2}, \pi'$ and $(\pi')^{-1}, u_{j_2}, \ldots, u_k = v$ to deduce $\phi(u) = \phi(w) = \phi(v)$, completing the induction step. 
\end{proof}

\begin{lem}Let $D$ be a connected locally-finite $C$-homogeneous digraph
with exactly two ends. 
Let $\theta$ be a surjective digraph homomorphism
  $D\rightarrow Z$.  Then for every integer $j$ 
the subdigraphs induced by $\cup_{i\leq j-1}\theta^{-1}(i)$ and 
$\cup_{i\geq j+1}\theta^{-1}(i)$ are both connected.
\end{lem}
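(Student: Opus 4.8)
The plan is to identify each of the two subdigraphs with a single cut belonging to the tree set $E$, and then to read off connectivity from Dunwoody's theorem. Throughout I keep the notation of the previous lemma: $e_0$ is a $D$-cut, $E = Ge_0 \cup Ge_0^*$ the associated tree set, $T = T(E)$ the structure tree, and $\phi \colon VD \to VT$ the structure map. Since $D$ has exactly two ends, $T$ is a line with no leaves; fix a linear order on it.

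First I would set up the correspondence between the level sets $\theta^{-1}(i)$ and the vertices of the line $T$. By the previous lemma, $\theta(u) = \theta(v)$ implies $\phi(u) = \phi(v)$, so $\phi$ is constant on each fibre $\theta^{-1}(i)$; write $p(i)$ for this common value. The map $p \colon \mathbb{Z} \to VT$ is injective and order-preserving. Indeed, if $a < b$ then (every vertex having positive out-degree, by vertex-transitivity) a vertex of level $a$ has a directed path of length $b-a$ to level $b$, whence $d_T(p(a),p(b)) = (b-a)N \neq 0$ by Lemma~\ref{biglemma}\eqref{distances}; applied to triples $a<b<c$ the additivity $d_T(p(a),p(c)) = d_T(p(a),p(b)) + d_T(p(b),p(c))$ forces the $p(i)$ to occur in order along the line. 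Consequently $\phi(v) = p(\theta(v))$ with $p$ increasing, so for every integer $m$,
\[
\{v : \theta(v) \geq m\} = \{v : \phi(v) \geq p(m)\}, \qquad \{v : \theta(v) \leq m\} = \{v : \phi(v) \leq p(m)\},
\]
where $\geq,\leq$ refer to the linear order on $T$.

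Next I would match these sets with elements of $E$. Because $T$ is a line and its arcs are in bijection with $E$, every oriented edge of the line lies in $E$. From the construction of $\phi$ one checks that the arc of $T$ oriented towards and terminating at the vertex $p(m)$ is, as a subset of $VD$, exactly $\{v : \phi(v) \geq p(m)\}$, while the reverse arc terminating at $p(m)$ is $\{v : \phi(v) \leq p(m)\}$. Taking $m = j+1$ in the first identity exhibits $\cup_{i \geq j+1}\theta^{-1}(i)$ as a cut in $E$, and taking $m = j-1$ in the second exhibits $\cup_{i \leq j-1}\theta^{-1}(i)$ as a cut in $E$.

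Finally I would deduce connectivity. By condition (1) of Theorem~\ref{DunwoodysThm} the subgraphs induced by $e_0$ and $e_0^*$ are connected; since $E = Ge_0 \cup Ge_0^*$ and each $g \in G$ is an automorphism of $\Gamma(D)$, every cut in $E$ induces a subgraph isomorphic to one of these two, hence connected. Applying this to the two cuts identified above gives the lemma. I expect the main obstacle to be the bookkeeping in the middle step: verifying that the cut attached to a given edge of the line $T$ really is the $\phi$-preimage of the corresponding half-line. This is where the precise definition of $\phi$ (a vertex is sent to the terminal vertex of the minimal cut in $(E,\subseteq)$ containing it) must be combined with the monotonicity of $p$, to ensure that no level is stranded on the wrong side of the chosen edge.
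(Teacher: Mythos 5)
Your proposal is correct, but it takes a genuinely different route from the paper. You identify each half-set with an element of the tree set $E$ and then transport condition (i) of Theorem~\ref{DunwoodysThm} from $e_0,e_0^*$ to all of $E$ by automorphisms; the step you flag as the main obstacle is real work but does hold: writing the vertices of the line $T$ as $(t_k)_{k\in\mathbb{Z}}$ and $e_k\in E$ for the arc $(t_k,t_{k+1})$, the correspondence between directed edge paths in $T$ and inclusion in $(E,\subseteq)$ makes the increasing-direction cuts a strictly nested chain $e_k\supset e_{k+1}\supset\cdots$, and the definition of $\phi$ via minimal cuts gives $\phi(v)=t_{K+1}$ where $K$ is the largest index with $v\in e_K$ (the minimal decreasing-direction cut $e_{K+1}^*$ containing $v$ has the same terminal vertex, so $\phi$ is consistent); hence $e_k=\{v:\phi(v)\geq t_{k+1}\}$ and $e_{k+1}^*=\{v:\phi(v)\leq t_{k+1}\}$, which together with your monotone $p$ yields exactly the identifications you want (existence of the minimal cuts, i.e.\ well-definedness of $\phi$, is quoted from \cite{Moller5}, as the paper itself does). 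The paper instead argues elementarily, with no further structure-tree bookkeeping: by the preceding lemma the fibre $\theta^{-1}(j)$ is finite, and since $D$ has exactly two ends, deleting a finite vertex set leaves at most two infinite components; every vertex $u$ of the lower half-set keeps the infinite set $\anc(u)$ inside its component and every vertex of the upper half-set keeps $\desc(u)$, so there are no finite components, and as the two half-sets partition $D\setminus\theta^{-1}(j)$ with no edges between them (arcs raise $\theta$ by exactly $1$), each half-set must be precisely one of the two components, hence connected. The paper's route buys brevity; yours buys the sharper conclusion that the two half-sets are actual cuts of the tree set, essentially recovering the arcs of $T$ as level sets of $\theta$. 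Two small points of hygiene: the implication $\theta(u)=\theta(v)\Rightarrow\phi(u)=\phi(v)$ that you invoke is established inside the \emph{proof} of the preceding lemma rather than in its statement, so cite it as such or reprove it; and your phrase ``the reverse arc terminating at $p(m)$'' is loose --- the reverse of an arc terminating at $p(m)$ originates there; the object you mean is the oppositely oriented arc $(t_{k+2},t_{k+1})=e_{k+1}^*$ with terminal vertex $p(m)=t_{k+1}$, whose underlying set is as you state.
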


\begin{proof}
The graph $D\setminus \theta^{-1}(j)$ is not connected, since every
path from a vertex in $\theta^{-1}(j-1)$ to a vertex in
$\theta^{-1}(j+1)$ must include a vertex in $\theta^{-1}(j)$.  The
digraph $D$ is assumed to have precisely two ends so if we remove a
finite set of vertices from $D$ then we get at most two infinite
components.   If $u$ is a vertex in $\cup_{i\leq j-1}\theta^{-1}(i)$
then the set $\anc(u)$ belongs to the same component of 
$D\setminus\theta^{-1}(j)$ as $u$ does and this set is infinite.
Similarly for a vertex $u$ in $\cup_{i\geq j+1}\theta^{-1}(i)$ the set
$\desc(u)$ belongs to the same component of
$D\setminus\theta^{-1}(j)$ as $u$.   Thus $D\setminus\theta^{-1}(j)$
has no finite components and there are at most two infinite components 
so the subdigraphs induced by  
$\cup_{i\leq j-1}\theta^{-1}(i)$ and 
$\cup_{i\geq j+1}\theta^{-1}(i)$ must be connected.
\end{proof}

\begin{lem}\label{completebipartite}
Let $D$ be a connected locally-finite $C$-homogeneous digraph
with exactly two ends. 
Let $\theta$ be a surjective digraph homomorphism
  $D\rightarrow Z$.  Then
there exist $k, l \in \mathbb{N}$ such that for all $i \in \mathbb{Z}$,
the subdigraph induced by $\theta^{-1}(i) \cup \theta^{-1}(i+1)$ is
isomorphic to the disjoint union of $k$ copies of the complete
bipartite graph $K_{l,l}$. In particular, $\Delta(D) \cong K_{l,l}$.     
\end{lem}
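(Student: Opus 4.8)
The plan is to determine the isomorphism type of $\Delta(D)$ first, and then read off the structure of a whole level-pair. Since $\theta$ is a homomorphism onto $Z$, every arc of $D$ runs from a fibre $\theta^{-1}(i)$ to the next fibre $\theta^{-1}(i+1)$. Feeding this into the definition of an alternating walk (its odd and even terms lie at consecutive $\theta$-levels), an alternating walk that starts with an arc from level $i$ to level $i+1$ stays inside $\theta^{-1}(i)\cup\theta^{-1}(i+1)$; hence each reachability class, and so each isomorphic copy of $\Delta(D)$, is precisely a connected component of the bipartite digraph $B_i=\lb\theta^{-1}(i)\cup\theta^{-1}(i+1)\rb$, all of whose arcs point from level $i$ to level $i+1$. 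Because the fibres are finite, $\Delta(D)$ is finite, and by Lemma~\ref{easyone} it is a finite $C$-homogeneous bipartite graph; Theorem~\ref{chomogeneousbipartite} then restricts it to $C_{2m}$, $K_{m,n}$ or $CP_n$. All fibres share a common finite size $s$ (in the first lemma of this section the fibres of $\theta$ were identified with those of $\phi$, which are equidistributed), so if $\Delta(D)$ has parts of sizes $p$ and $q$ and $B_i$ splits into $k$ components, then counting vertices on each level gives $kp=s=kq$, whence $p=q$. This already rules out $K_{m,n}$ with $m\neq n$, and leaves $C_{2m}$, $CP_n$ and $K_{l,l}$.

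The crux — and the step I expect to be hardest — is to eliminate the non-complete cases $C_{2m}$ $(m\ge3)$ and $CP_n$ $(n\ge3)$, using two-endedness together with $C$-homogeneity. I would argue by a dichotomy. Fix a block $C\cong\Delta(D)$ spanning levels $i,i+1$; each of its $p$ level-$(i+1)$ vertices lies in a unique upper block spanning levels $i+1,i+2$. If these upper blocks are not all equal, then the block-incidence graph branches as one goes up, and since the blocks are finite and glued tree-like this creates more than two ends in $D$, contrary to hypothesis. So two-endedness forces maximal merging: all level-$(i+1)$ vertices of $C$ lie in one upper block $C'$, and, as the parts have equal size $p$, they are exactly the level-$(i+1)$ part of $C'$. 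Now I would use the rigidity of a non-complete $\Delta(D)$. Let $y$ be a level-$(i+1)$ vertex, with two in-neighbours $x,x'$ in $C$; the connected subdigraphs $\lb\{x,y\}\cup D^+(y)\rb$ and $\lb\{x',y\}\cup D^+(y)\rb$ are isomorphic via the map fixing $y$ and $D^+(y)$ pointwise and sending $x\mapsto x'$, so $C$-homogeneity supplies $\alpha\in\Aut(D)$ fixing $y$ and $D^+(y)$ and carrying $x$ to $x'$. Since $\alpha$ fixes the arc $y\to w$ for $w\in D^+(y)\subseteq C'$ it preserves $C'$, and for $\Delta(D)\cong C_{2m}$ (where fixing an incident arc pins the whole cycle) it fixes the level-$(i+1)$ part of $C'$ pointwise; but $\alpha$ preserves $C$ and acts there as the reflection fixing $y$ and swapping $x,x'$, which moves some level-$(i+1)$ vertex of $C$. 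As these vertices are shared with $C'$ by merging, this is a contradiction. The delicate point needing real case-work is that, for $CP_n$ with $n\ge4$, fixing a single arc of $C'$ does not determine $C'$, so one must anchor the argument on a larger connected configuration (or exploit the matched-pair relation $\Rightarrow$ of Section~5) to force the same rigidity.

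With $C_{2m}$ and $CP_n$ excluded, $\Delta(D)\cong K_{l,l}$ for some $l$. Each connected component of $B_i$ is then a copy of $\Delta(D)\cong K_{l,l}$, and distinct components share neither vertices nor arcs, so $B_i$ is the disjoint union of its components. Counting vertices gives exactly $k=s/l$ components, a value independent of $i$ since $s$ and $l$ are global constants. This is precisely the asserted description of $\lb\theta^{-1}(i)\cup\theta^{-1}(i+1)\rb$, and in particular $\Delta(D)\cong K_{l,l}$, completing the proof.
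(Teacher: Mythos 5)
Your opening moves match the paper: since $\theta$ increments levels along every arc, alternating walks stay inside a single level-pair, so the reachability classes are exactly the connected components of $\langle\theta^{-1}(i)\cup\theta^{-1}(i+1)\rangle$; finiteness of the fibres plus Lemma~\ref{easyone} and Theorem~\ref{chomogeneousbipartite} (triangle-freeness having been established in the first lemma of the section) reduce $\Delta(D)$ to $C_{2m}$, $K_{m,n}$ or $CP_n$, and the fibre-count forces equal part sizes. The gap is in your elimination of the non-complete types, and it is twofold. First, your ``maximal merging'' step is unjustified: you claim that if the upper blocks of the level-$(i+1)$ vertices of a block $C$ are not all equal then $D$ has more than two ends, ``since the blocks are finite and glued tree-like'' --- but tree-like gluing is not a consequence of two-endedness; it is essentially the conclusion you are trying to reach. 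Blocks can branch and re-merge cylindrically without creating new ends. Concretely, take $VD=\mathbb{Z}_4\times\mathbb{Z}$ with $\theta(j,i)=i$, and between levels $2i,2i+1$ put all arcs $(a,2i)\to(b,2i+1)$ with $a,b\in\{0,1\}$ or $a,b\in\{2,3\}$, while between levels $2i+1,2i+2$ use the pairs $\{1,2\}$ and $\{3,0\}$. This digraph is connected, locally finite, vertex-transitive, has exactly two ends, every reachability class is a $K_{2,2}$, and yet the two upper vertices of each block lie in two \emph{distinct} upper blocks. (It is of course not $C$-homogeneous, but your merging argument invokes only two-endedness and finiteness of blocks, so it cannot be correct as stated; any repair must inject $C$-homogeneity at this point.) The paper never asserts merging; instead it anchors automorphisms \emph{from below}, using the preceding lemma that $\bigcup_{j\leq -1}\theta^{-1}(j)$ induces a connected subdigraph.

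Second, even granting merging, you prove rigidity only for the cycle case (that argument --- an automorphism fixing $y$ and $D^{+}(y)$ pointwise must be the identity on the cycle $C'$, yet acts as a nontrivial reflection on $C$ --- is correct and a pleasant variant of the paper's), while for $CP_n$ with $n\geq 4$ you explicitly defer to ``a larger connected configuration'' without carrying it out. That is precisely the case the paper works hardest on: it introduces the ``red arcs'' joining each vertex of $\theta^{-1}(i+1)$ to its unique unrelated partner in its block in $\theta^{-1}(i)$; these form a perfect matching whose union is a disjoint family of directed lines permuted by $\Aut(D)$, and a path $P\subseteq\bigcup_{j\leq 0}\theta^{-1}(j)$ with $P\cap\theta^{-1}(0)=\{u\}$, running from the red line $L_a$ through $a\in D^{+}(u)$ back to $u$, lets $C$-homogeneity produce an automorphism fixing $P$ pointwise and sending $a$ to $b$, which must simultaneously fix and move $L_a$ --- a contradiction. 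The case $C_m$, $m\geq 8$, is handled by a similar downward anchoring on a finite connected set $Q$ containing $D^{-1}(d)$ and $D^{-1}(a)$. So your proposal needs both a correct replacement for the merging claim and a genuine argument for $CP_n$, $n\geq 4$, before it closes.
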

\begin{proof}   
 Define $\mathcal{B}$ as the graph 
$\lb \theta^{-1}(0) \cup \theta^{-1}(1) \rb$,
noting that $\mathcal{B}$ is finite since the fibres of $\theta$ are finite.
Note that each of the graphs 
$\mathcal{B}_k=\lb \theta^{-1}(k) \cup \theta^{-1}(k+1)
\rb$ is isomorphic to the graph $\mathcal{B}$. Of course all
the arrows of the bipartite graph $\mathcal{B}$ are oriented in
the same way from $\theta^{-1}(0)$ to $\theta^{-1}(1)$.  
Clearly $\mathcal{B}$ is a disjoint union of a finite number of copies
of the reachability bipartite graph $\Delta$. 

Next we shall prove that $\Delta = \Delta(D)$ is isomorphic to
$K_{l,l}$ for some $l \in \mathbb{N}$.  
Certainly $\Delta$ is finite since $\Delta \subseteq \mathcal{B}$
which is finite  and so $\Delta \not\cong T_{a,b}$. 
So by Theorem~\ref{bigtheorem}(iii) it suffices to prove that $\Delta$
is not the complement of a perfect matching (with at least $4$
vertices), and is not an even cycle $C_m$ with $m \geq 8$.  Once this
has been established it will follow that $\Delta \cong K_{l,m}$ for
some $l,m \in \mathbb{N}$. Then because the
fibers $ \theta^{-1}(i) $ (for $i \in \mathbb{Z}$) all have the same
size, we may conclude that $l=m$. 

First we suppose that  
$\Delta$ is the complement of a perfect matching. So for each vertex
$v$ in $\theta^{-1}(i)$ there is a unique vertex $u$ in $\theta^{-1}(i+1)$
that is in the same
component of $\mathcal{B}_i$ but is not related to $v$.  For each
such pair we put in a new arc $u\rightarrow v$ and we call these arcs
{\em red arcs} to distinguish them from the original arcs in $D$.  Each
vertex in $D$ has precisely one in-going red arc and one out-going red
arc.  The digraph consisting of the vertices in $D$ and the  red arcs 
is therefore a collection of disjoint directed lines such
that each vertex in $D$ belongs to precisely one of these lines and
each of the lines contains precisely one vertex from each fiber
$\theta^{-1}(i)$ .  The
automorphism group of $D$ also preserves the red arcs and permutes these
lines.  Let $u$ be a vertex in $\theta^{-1}(0)$ and let $a$ and $b$ be
vertices in $D^+(u)$.  Denote with $L_a$ and $L_b$ the red lines that
$a$ and $b$ belong to, respectively.  
Since $\cup_{j\leq -1}\theta^{-1}(j)$ is connected
it is possible to find in $\cup_{j\leq 0}\theta^{-1}(j)$ a path $P$ starting
in a vertex in $L_a$ and ending in $u$ with the property that $P$ only contains one 
vertex from $\theta^{-1}(0)$, and so 
$P \cap \theta^{-1}(0) = \{ u \}$. 
The subdigraphs of $D$ induced
by $P\cup\{a\}$ and $P\cup\{b\}$ are isomorphic.  By $C$-homogeneity
    there is an automorphism that 
fixes    
all the vertices in $P$ and takes
$a$ to $b$.  This automorphism will take $L_a$ to $L_b$, but
that can not happen because the automorphism also fixes a vertex in
$P$ that belongs to $L_a$ and must therefore fix $L_a$.  Hence
the assumption that the $\Delta$ is the complement of a perfect
matching is untenable.

Now suppose that $\Delta \cong C_m$ for some $m \geq 8$. In particular each vertex in $D$ has in-degree and 
out-degree equal to $2$. Let $a,d \in \phi^{-1}(0)$ be distinct vertices such that they have a common neighbour $b \in \phi^{-1}(1)$. Also let $c \in D^+(a) \setminus \{ b \}$ be the other neighbour of $a$ in $\phi^{-1}(1)$. Let $u,v \in \phi^{-1}(-1)$ be the vertices of $D^{-1}(d)$. Let $w \in D^{-1}(a)$ noting that $w$ could belong to the set $\{u,v\}$, and  let $Q$ be a finite subset of $\cup_{j \leq -1} \phi^{-1}(j)$ such that $\{u,v,w\} \subseteq Q$ and $\lb Q \rb$ is connected. Such a set $Q$ exists since the subdigraph induced by $ \cup_{j \leq -1} \phi^{-1}(j) $ is connected. By $C$-homogeneity there is an automorphism $\alpha$ that fixes each of the vertices in $Q \cup \{ a \}$ and interchanges $b$ and $c$. However, since $\alpha$ fixes $u$ and $v$ it must also fix $d$ since $d$ is their only common neighbour in $\phi^{-1}(0)$. But this is impossible since $b \sim d$ but $c \not\sim d$ since $\Delta$ has at least $8$ vertices. \end{proof}

\begin{proof}[Proof of Theorem~\ref{2_ended_classification}] 

We will use the same notation as above.  
By Lemma~\ref{completebipartite}  it suffices to show  
the digraph $\mathcal{B} = \langle \theta^{-1}(0) \cup
\theta^{-1}(1) \rangle$ is connected. Seeking a contradiction, suppose
that  $\mathcal{B}$ has
more than one connected component. As observed above
 the digraph induced by $\bigcup_{j \geq 0}
\theta^{-1}(j)$ is connected. From this, together with $C$-homogeneity,
it follows that there are
distinct connected components $A$ and $B$ of
$\mathcal{B}$ such that for some $a \in A \cap \theta^{-1}(1)$ and
$b \in B \cap \theta^{-1}(1)$ we have $D^+(a) \cap D^+(b) \neq
\varnothing$. Let $c \in D^+(a) \cap D^+(b)$.   So $c \in
\theta^{-1}(2)$. By Lemma~\ref{completebipartite}, $\langle A \rangle
\cong \langle B \rangle \cong K_{l,l}$ for some $l \in
\mathbb{N}$. Since the subdigraph induced by $\bigcup_{i \leq 0}
\theta^{-1}(i)$ is connected, there exists a finite set $F \subseteq
\bigcup_{i \leq 0} \theta^{-1} (i)$ such that $\theta^{-1}(0) \subseteq F$
and $\langle F \rangle$ is connected. Now for every $a' \in A \cap
\theta^{-1}(1)$ the mapping $\alpha_{a'} : \langle F \cup \{ b,a \}\rangle
\rightarrow \langle F \cup \{ b,a' \} \rangle$ which sends $f \mapsto
f$ (for $f \in F$), $b \mapsto b$, and $a \mapsto a'$, is an
isomorphism between finite connected subdigraphs of $D$. Hence by
$C$-homogeneity for each $a' \in A \cap \theta^{-1}(1)$ the
isomorphism $\alpha_{a'}$ extends to an automorphism of $D$. Now $a$
and $b$ belong to the same connected component of $\mathcal{B}_{1}$,
since they are both adjacent to $c$. Therefore for each $a' \in A \cap
\theta^{-1}(1)$ the vertices $a' = \alpha_{a'}(a)$ and $b =
\alpha_{a'}(b)$ belong to the same connected components of
$\mathcal{B}_{1}$ as one another. This implies that $\{ b \} \cup (A
\cap \theta^{-1}(1))$ is a subset of a single connected component of
$\mathcal{B}_{1}$. But this is impossible since by
Lemma~\ref{completebipartite} each component of $\mathcal{B}_{1}$ is
isomorphic to $K_{l,l}$, while $|\{ b \} \cup (A \cap \theta^{-1}(1))|
= l+1$. This is a contradiction and completes the proof of the
theorem.         
\end{proof}

The argument in Lemma~\ref{completebipartite} to exclude the
possibility that the reachability digraph is isomorphic to the
complement of a perfect matching can be adapted to show that the
reachability digraph of a highly arc-transitive 2-ended digraph cannot
be the complement of a perfect matching.  

\section{Connected-homogeneous digraphs with triangles}

In this section $D$ will be a connected locally-finite $C$-homogeneous
digraph with more than one end and we assume that
$D$ embeds a triangle. Since $D$
embeds a triangle it follows from Theorem~\ref{2_ended_classification} that $D$ is
not $2$-ended, and hence must have infinitely many ends. In this case
we are able to give
an explicit classification of the digraphs that arise. We now describe
this family of graphs, and then give a proof that any digraph
satisfying the above hypotheses belongs to this family. Our approach
is similar to that used
in \cite{Moller4} and \cite{Gray2}.

Recall from Section~2 that $D_3$ denotes the directed triangle.
For $r \in \mathbb{N}$ we use $T(r)$ to
denote the directed Cayley graph of the free product
\[
\langle a_1 \rangle * \langle a_2 \rangle * \cdots * \langle a_{r}
\rangle, \quad a_i^3=1, \quad i=1,2, \ldots, r
\]
of $r$ copies of the cyclic group $\mathbb{Z}_3$, with respect to the
generating set $A = \{ a_1, a_2, \ldots, a_{r} \}$. The digraph
$T(3)$ is shown in Figure~\ref{hello}. The corresponding undirected
graphs to these occur in the classification of
locally-finite distance-transitive graphs by Macpherson in
\cite{Macpherson1}. The main result of this section
is the following.

\begin{figure}
\[
\xymatrix{    
        & \node \arcc{rr} &        & \node  &        & \node \arcc{rr} &
     & \node  &        \\
\node \arcb{dd}  &        &        &        &        &        &        &
     & \node \arcc{dd}  \\
        &        & \node \arcc{rrrr} \arcb{dddrr} \arcc{dll} \arcb{ull}
\arcc{uul} \arcb{uur} &        &        &        & \node
\arcc{uul} \arcb{uur} \arcc{urr} \arcb{drr} &        &        \\
\node   &        &        &        &        &        &        &        &
\node  \\
        &        &        &        &        &        &        &        &
     \\
        &        & \node  &        & \node \arcb{uuurr} \arcb{ll}
\arcc{rr} \arcb{ddr} \arcc{ddl} &        & \node  &        &
\\
        &        &        &        &        &        &        &        &
     \\
        &        &        & \node \arcc{uul} &        & \node \arcb{uur} &
       &        &
}
\]
 \caption{A partial view of the digraph $T(3)$.} \label{hello}
\end{figure}
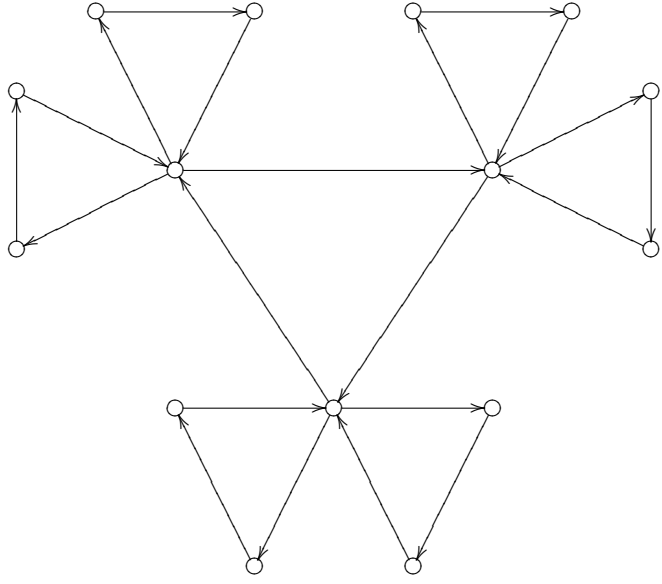

\begin{thm}\label{triangleclassification}
Let $D$ be a connected locally-finite digraph with more than one end, and
suppose that $D$ embeds a triangle. Then $D$ is $C$-homogeneous if and
only if it is isomorphic to $T(r)$ for some $r \in \mathbb{N}$.
\end{thm}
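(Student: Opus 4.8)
The plan is to prove both implications, with the substantial work in the direction ``$C$-homogeneous $\Rightarrow$ isomorphic to $T(r)$'' resting on a local-to-global analysis through the structure tree. For the easy direction I would verify directly that each $T(r)$ is $C$-homogeneous. Since $T(r)$ is the Cayley digraph of the free product $\langle a_1\rangle \ast \cdots \ast \langle a_r\rangle$ with $a_i^3=1$, it is a \emph{tree of directed triangles}: left multiplication makes it vertex-transitive, each vertex lies in exactly $r$ directed triangles (one per generator), any two triangles meet in at most one vertex, and these triangles are its only cycles, so its block--cut tree is the semiregular tree $T_{3,r}$. Given an isomorphism between finite connected induced subdigraphs, each subdigraph meets every triangle-block in a connected piece (the same path-leaving-and-re-entering argument as in Theorem~\ref{CPWalwaysworks}), and I would extend the isomorphism block-by-block outward along the block--cut tree, using at each newly reached triangle the freedom to map a directed triangle onto a directed triangle and at each vertex the freedom to permute the incident triangles. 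This is the inductive extension scheme of Theorem~\ref{CPWalwaysworks}.

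For the converse, let $D$ be connected, locally-finite, $C$-homogeneous, with more than one end and embedding a triangle, and set $G=\Aut(D)$. From $C$-homogeneity $G$ is transitive on vertices and arcs and $G_v$ is transitive on $D^+(v)$ and on $D^-(v)$. The first task is to pin down the configuration induced on $D(v)=D^+(v)\cup D^-(v)$. I would show: (a) every triangle of $D$ is a \emph{directed} triangle $D_3$; (b) hence $D^+(v)$ and $D^-(v)$ are independent, and every arc of $\langle D(v)\rangle$ runs from an out-neighbour to an in-neighbour, closing a copy of $D_3$ through $v$; and (c) these cross-arcs form a perfect matching, so $v$ lies in exactly $r:=d^+(v)=d^-(v)$ directed triangles, pairwise meeting only in $v$. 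Part (b) is immediate from (a) (any other orientation produces a transitive triangle), and for (c) I would argue, in the style of Lemma~\ref{neighbourhoods} and the bipartite classification, that the bipartite ``link'' of $v$ inherits enough homogeneity to be forced to be a matching rather than, say, a complete bipartite graph.

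The heart of the proof is the global step. Having matched the local picture of $D$ to that of $T(r)$, I would invoke Dunwoody's Theorem~\ref{DunwoodysThm}, fix a $D$-cut $e_0$, and pass to the structure tree $T=T(E)$. The goal is to show that $D$ is globally tree-like --- that the triangle-blocks, glued along their shared vertices, carry no cycles beyond the triangles themselves --- so that the block--cut graph of $D$ is a tree and, by vertex-transitivity and the local count $r$, is the semiregular tree $T_{3,r}$. With the blocks identified as directed triangles and the gluing shown to be free, identifying a chosen base vertex with the identity and labelling the $r$ triangles at each vertex by the generators $a_1,\dots,a_r$, then extending outward along the block--cut tree, yields an explicit isomorphism $D\cong T(r)$; freeness of the free product is precisely what matches the absence of non-triangular cycles.

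I expect the two hard points to be step (a) and the global tree-likeness. Ruling out transitive triangles cannot proceed as in the triangle-free case: a directed triangle is itself a directed cycle, so Lemma~\ref{biglemma}(iv) and the ``all directed paths have equal length'' principle are unavailable, and indeed $T(r)$ violates them. Instead I would eliminate a transitive triangle $u\to v\to w$, $u\to w$ by showing that $C$-homogeneity together with arc-transitivity forces a rigid neighbourhood configuration incompatible with the separation properties of a $D$-cut. For the global step the subtlety is that the reachability relation $\mathcal{A}$ may here be \emph{universal}, so Proposition~\ref{ReachabilityBipartite} yields no bipartite reachability graph and the reduction used in Section~4 is unavailable; one must therefore work with the structure tree directly, using conditions~(1)--(3) of Theorem~\ref{DunwoodysThm} to exclude large cycles and read off the tree-of-triangles shape.
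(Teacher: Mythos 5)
Your outline reproduces the paper's architecture at the coarse level (local picture: $r$ directed triangles through each vertex meeting pairwise only in that vertex; global step: tree-likeness via the structure tree; then a ball-by-ball construction of the isomorphism with $T(r)$; the easy direction by block-by-block extension is fine), but at precisely the two points you yourself flag as hard the proposal contains intentions rather than arguments, and it omits the single lemma that drives the paper's entire proof. That lemma is the component--adjacency dichotomy (Lemma~\ref{localstructure}): for $x,y \in D(u)$, the vertices $x$ and $y$ are adjacent if and only if $\phi(x)$ and $\phi(y)$ lie in the same component of $T \setminus \{\phi(u)\}$. It is proved by observing that edge-transitivity forces $d_T(\phi(x),\phi(y))=N$ for all adjacent pairs, then using $C$-homogeneity to extend the isomorphism $\lb x,u,y\rb \cong \lb x,u,b\rb$, where $b$ is supplied by Lemma~\ref{distributed} in a different component, and comparing $d_T(\phi(x),\phi(y))<2N$ with $d_T(\phi(x),\phi(b))=2N$. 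This makes adjacency an equivalence relation on $D^+(u)$, so each component of $\lb D^+(u)\rb$ is a finite homogeneous tournament, hence a point or $D_3$ (Lemma~\ref{nhoodisatrounament}, via Lachlan); the $D_3$ case is then eliminated by the seven-vertex tournament counting argument of Lemma~\ref{no3chain}, which again uses the component structure to bound how many of $D^{\pm}(a_1)$ can map into one component. Your step (a) --- ``a rigid neighbourhood configuration incompatible with the separation properties of a $D$-cut'' --- is exactly this content, and nothing in your sketch supplies it. Note also that the paper never proves ``all triangles are directed'' as a first step: independence of $D^{\pm}(u)$ comes first, and then the matching in your step (c) is immediate from the dichotomy (if $a \in D^-(u)$ were related to $b,b' \in D^+(u)$, then $b$ and $b'$ would be forced adjacent, contradicting independence --- Lemma~\ref{triangles}). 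Your alternative route to (c) via a homogeneous bipartite link would still leave the complete bipartite and complement-of-perfect-matching cases of the Goldstern--Grossberg--Kojman classification to be excluded, and you give no argument for either.

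The global step has the same defect: ``using conditions (1)--(3) of Theorem~\ref{DunwoodysThm} to exclude large cycles'' is a hope, not a mechanism, and the paper does not argue that way. What it actually does is bootstrap Lemma~\ref{localstructure} into the distance formula $d_T(\phi(x),\phi(y)) = N\, d(x,y)$ for \emph{all} pairs, by induction along a geodesic: at an interior vertex $x_k$ the neighbours $x_{k-1}$ and $y$ are non-adjacent, hence map into different components of $T\setminus\{\phi(x_k)\}$, so distances in $T$ add. This formula shows $\phi$ is injective and that each vertex at distance $k+1$ from $u$ has a \emph{unique} neighbour at distance $k$ (two such neighbours would occupy the same point of the $T$-geodesic from $\phi(u)$ to $\phi(y)$, impossible in a tree), which is exactly the ``no cycles beyond the triangles'' statement you need; the isomorphism with $T(r)$ is then extended ball-by-ball using Lemma~\ref{triangles}. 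Your observation that the reachability reduction of Section~4 is unavailable because $\mathcal{A}$ may be universal here is correct and shows the right instinct, but the working substitute --- the distance formula above, resting on the dichotomy of Lemma~\ref{localstructure} --- is never formulated in your proposal, so both the local step and the global tree-likeness remain genuinely open in your argument.
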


It is an easy exercise to check that the digraphs $T(r)$ are all
$C$-homogeneous. The rest of this section will be devoted to proving
the other direction of the theorem. 
For the remainder of this section $D$ will always denote a 
connected locally-finite digraph with more than one end such that $D$ embeds a triangle.

The following lemma is a straightforward consequence of the definitions.

\begin{lem}
For $v \in D$ the subdigraphs induced by $D^+(u)$ and $D^-(u)$ are
finite homogeneous digraphs.
\end{lem}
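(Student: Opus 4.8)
The plan is to observe that the only content of the lemma beyond trivial finiteness is homogeneity, and that this follows from a single ``coning'' trick: adjoining the apex vertex $u$ to any subset of $D^+(u)$ produces a connected subdigraph of $D$, so the full $C$-homogeneity of $D$ can be brought to bear. Finiteness of $\lb D^+(u) \rb$ and $\lb D^-(u) \rb$ is immediate, since $D$ is locally-finite and hence $d^+(u)$ and $d^-(u)$ are finite.

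For homogeneity of $\lb D^+(u) \rb$, I would start with an arbitrary isomorphism $\theta : \lb A \rb \rightarrow \lb B \rb$ between induced subdigraphs of $\lb D^+(u) \rb$, where $A, B \subseteq D^+(u)$. I would extend $\theta$ to a map $\hat{\theta}$ on $\{u\} \cup A$ by setting $\hat{\theta}(u) = u$ and $\hat{\theta}(a) = \theta(a)$ for $a \in A$. The key point to check is that $\hat{\theta}$ is an isomorphism between the \emph{induced} subdigraphs $\lb \{u\} \cup A \rb$ and $\lb \{u\} \cup B \rb$ of $D$: the arcs among the $A$-vertices (respectively $B$-vertices) are preserved because $\theta$ already is an isomorphism, and the only arcs between $u$ and the remaining vertices are $u \rightarrow a$ for $a \in A$ (respectively $u \rightarrow b$ for $b \in B$), since $A, B \subseteq D^+(u)$ and $D$ is antisymmetric, so no arc can point back into $u$. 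Thus the adjacencies involving $u$ all point the same way and are matched up correctly by $\hat{\theta}$. Crucially, both $\lb \{u\} \cup A \rb$ and $\lb \{u\} \cup B \rb$ are \emph{connected}, because $u$ is joined by an arc to every vertex of $A$ (respectively $B$).

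Having arranged this, I would invoke $C$-homogeneity of $D$ to extend $\hat{\theta}$ to an automorphism $\alpha \in \Aut(D)$. Since $\alpha$ fixes $u$ and automorphisms preserve the out-neighbour relation, $\alpha$ setwise fixes $D^+(u)$, so its restriction is an automorphism of $\lb D^+(u) \rb$; this restriction agrees with $\theta$ on $A$ and hence extends $\theta$, as required. This establishes that $\lb D^+(u) \rb$ is a finite homogeneous digraph, and the statement for $\lb D^-(u) \rb$ follows by the dual argument, adjoining $u$ via the in-arcs $a \rightarrow u$ for $a \in D^-(u)$.

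I do not expect any genuine obstacle here: the proof is a direct application of $C$-homogeneity, and the only thing requiring care is the routine verification that adjoining the apex $u$ yields a legitimate isomorphism of connected induced subdigraphs of $D$, which is exactly where the antisymmetry of $D$ and the definition of $D^+(u)$ are used.
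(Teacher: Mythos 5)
Your proof is correct and is essentially identical to the paper's own argument: both adjoin the apex vertex $u$ to the domain and codomain to obtain connected induced subdigraphs, extend the isomorphism by fixing $u$, invoke $C$-homogeneity to get an automorphism of $D$, and restrict it to $D^+(u)$ (dually $D^-(u)$). Your explicit check that the coned map is an isomorphism of induced subdigraphs is merely a more detailed rendering of a step the paper leaves implicit.
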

\begin{proof}
The subdigraph induced by $D^+(u)$ (respectively $D^-(u)$) is finite
since $D$ is locally finite.   Suppose $\varphi:U\rightarrow V$ is an
isomorphism between two induced subdigraphs of $D^+(u)$.  Let $U'$ be
the subdigraph of $D$ induced by the set $U\cup\{u\}$.
Define $V'$ similarly.  The isomorphism $\varphi$ extends to an
isomorphism  between $U'$ and $V'$ that maps $u$ to $u$.  The
subdigraphs $U'$ and $V'$ are connected and thus there is an
automorphism $\hat{\varphi}$ of $D$ that extends the isomorphisms $U'\rightarrow
V'$.  Since $\hat{\varphi}(u)=u$ we see that the restriction of
$\hat{\varphi}$ to $D^+(u)$ is an automorphism of $D^+(u)$ that extends
$\varphi$.  Whence $D^+(u)$ is homogeneous as claimed.  The proof that
$D^-(u)$ is homogeneous is similar.
\end{proof}

We use the same notation as in previous sections, with $e_0$ being a
fixed $D$-cut of $D$, $E = Ge_0
\cup Ge_0^*$ the associated tree set where $G = \Aut(D)$, $T(E)$ the structure tree, and
$\phi : VD \rightarrow VT$ the structure map. Fix a vertex $u \in
D$. Recall that we write $D(u)$ to denote all vertices that are joined
to $u$ by an arc.

\begin{lem}\label{morethanone}
$\phi(D(u))$ intersects more than one connected component of the graph
  induced by $T \setminus \{ \phi(u) \}$.
\end{lem}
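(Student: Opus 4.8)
The plan is to argue by contradiction. Suppose that $\phi(D(u))$ meets at most one component of $T\setminus\{\phi(u)\}$. First I observe that no neighbour of $u$ is sent to $\phi(u)$ itself: since $D$ is $C$-homogeneous it is arc-transitive, so exactly as in the opening lines of the proof of Lemma~\ref{biglemma}(ii) (which use only arc-transitivity, not $2$-arc-transitivity) there is a constant $N>0$ with $d_T(\phi(x),\phi(y))=N$ for every arc $x\to y$, whence $\phi(x)\ne\phi(y)$ whenever $x$ and $y$ are adjacent in $D$. Moreover $T$ has no leaves, by the vertex-transitivity argument given in Section~2, so $\phi(u)$ has degree at least $2$ in $T$ and $T\setminus\{\phi(u)\}$ genuinely has more than one component. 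Consequently $\phi(D(u))$ is a non-empty subset of $T\setminus\{\phi(u)\}$ contained in a single component $C$; let $z$ be the unique neighbour of $\phi(u)$ in $T$ lying in $C$, and let $e\in E$ be the arc $(\phi(u),z)$, regarded also as a cut $e\subseteq VD$.

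The heart of the matter is the following correspondence between the cut $e$ and the component $C$: for every vertex $v\in VD$,
\[
v\in e \iff \phi(v)\in C .
\]
I would prove this directly from the definition of the structure map together with the properties of the tree set in Theorem~\ref{DunwoodysThm}. Recall that $\phi(v)$ is the head of a minimal cut $e_v=(x_v,\phi(v))\in E$ containing $v$, and that for $f,g\in E$ one has $f\subseteq g$ if and only if there is a directed edge-path in $T$ from $g$ to $f$, a path that leaves $g$ through its head. Given $v$, any cut of $E$ containing $v$ is nested with both $e$ and $e^*$ by Theorem~\ref{DunwoodysThm}(3), so comparing $e_v$ with $e$ (respectively $e^*$) leaves only a handful of cases; the case analysis shows that $v\in e$ forces the head $\phi(v)$ to lie in the $z$-subtree beyond $\phi(u)$, that is in $C$, while $v\in e^*$ forces $\phi(v)$ onto the $\phi(u)$-side, hence outside $C$. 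The only delicate cases are those in which the nesting would exhibit $e$ or $e^*$ as a proper subcut of $e_v$ that still contains $v$; these are ruled out precisely by the minimality of $e_v$ in the definition of $\phi$. This correspondence is the step I expect to be the main obstacle, since it is where all the structure-tree bookkeeping (nesting, the orientation of directed edge-paths, and minimality) must be handled carefully.

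Granting the correspondence, the contradiction is immediate. By hypothesis $\phi(D(u))\subseteq C$, so every neighbour of $u$ lies in $e$, that is $D(u)\subseteq e$. On the other hand $\phi(u)\notin C$, so by the correspondence $u\notin e$, i.e. $u\in e^*$. Thus $u$ is a vertex of the induced subdigraph $\langle e^*\rangle$ none of whose neighbours lie in $e^*$. But $e^*\in E$ is a $G$-translate of $e_0$ or $e_0^*$, so by Theorem~\ref{DunwoodysThm}(1) the subdigraph $\langle e^*\rangle$ is infinite and connected; in particular it has at least two vertices, and therefore $u$ must have a neighbour inside $e^*$. This contradicts $D(u)\subseteq e$, and the proof is complete.

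Finally, I note that this argument uses only that $D$ is vertex- and arc-transitive with more than one end; the presence of a triangle, the standing hypothesis of this section, is not needed for this particular lemma.
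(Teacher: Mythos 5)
Your proof is correct and takes essentially the same route as the paper: the cut you construct (with $u\in e^*$ and $D(u)\subseteq e$) is precisely the cut $f\in E$ satisfying $u\in f$ and $D(u)\subseteq f^*$ that the paper's one-line proof invokes, and the final contradiction with Theorem~\ref{DunwoodysThm}(i) is identical. Your correspondence $v\in e \Leftrightarrow \phi(v)\in C$ simply makes explicit the structure-map bookkeeping (nesting plus minimality of the cut defining $\phi$) that the paper's proof leaves implicit.
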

\begin{proof}
If $\phi(D(u))$ were contained in a single connected component
of $T \setminus \{ \phi(u) \}$ then there would exist a cut $f \in E$
with $u \in f$ and $D(u) \subseteq f^*$. But this contradicts the fact
that $f$ is infinite and connected (see Theorem~\ref{DunwoodysThm} part (i)). 
\end{proof}

Since the digraph $D$ is $C$-homogeneous it follows that $G = \Aut{D}$
acts edge-transitively on the underlying undirected graph $\Gamma =
\Gamma(D)$. It follows that there is a positive integer $N$
such that whenever $u$ and $v$ are related vertices in $D$ then
$d_T(\phi(u),\phi(v)) = N$. This has the following consequence.

\begin{lem}\label{components}
If  $x$ and $y$ are vertices in $D(u)$ and $\phi(x)$ and $\phi(y)$
belong to different components of $T \setminus \{ \phi(u) \}$ then $x$ and
$y$ are not adjacent in $D$.
\end{lem}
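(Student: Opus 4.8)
The plan is to argue by contradiction, exploiting the distance constant $N$ recorded just before the lemma together with the fact that $T$ is a tree. Suppose, for a contradiction, that $x$ and $y$ are adjacent in $D$. Then each of the three pairs $\{u,x\}$, $\{u,y\}$ and $\{x,y\}$ consists of related vertices of $D$, so by the defining property of $N$ we have $d_T(\phi(u),\phi(x)) = d_T(\phi(u),\phi(y)) = d_T(\phi(x),\phi(y)) = N$.

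Since $N > 0$, the first two equalities give $\phi(x) \neq \phi(u)$ and $\phi(y) \neq \phi(u)$, so $\phi(x)$ and $\phi(y)$ are genuinely vertices of $T \setminus \{\phi(u)\}$. By hypothesis they lie in different connected components of $T \setminus \{\phi(u)\}$, which means that deleting $\phi(u)$ separates them; as $T$ is a tree, the unique path in $T$ joining $\phi(x)$ to $\phi(y)$ must therefore pass through $\phi(u)$. Hence the distance is additive along this path:
\[
d_T(\phi(x),\phi(y)) = d_T(\phi(x),\phi(u)) + d_T(\phi(u),\phi(y)) = N + N = 2N.
\]
Comparing with the value $d_T(\phi(x),\phi(y)) = N$ obtained above yields $2N = N$, that is $N = 0$, contradicting $N > 0$. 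Therefore $x$ and $y$ cannot be adjacent in $D$.

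There is essentially no serious obstacle here: the entire argument is the observation that a single edge of $D$ contributes exactly $N$ to the $T$-distance, whereas a path through $\phi(u)$ between two neighbours of $u$ lying in different components of $T \setminus \{\phi(u)\}$ is forced to contribute $2N$. The only point requiring a moment's care is justifying that the $T$-geodesic from $\phi(x)$ to $\phi(y)$ genuinely meets $\phi(u)$, but this is immediate from the ``different components'' hypothesis together with the uniqueness of paths in a tree.
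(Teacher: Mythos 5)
Your proof is correct and follows essentially the same route as the paper: both arguments rest on the observation that the unique $T$-path from $\phi(x)$ to $\phi(y)$ must pass through $\phi(u)$ (by the ``different components'' hypothesis), forcing $d_T(\phi(x),\phi(y)) = 2N$, which is incompatible with the value $N$ required of adjacent vertices since $N>0$. Your version merely phrases this as an explicit contradiction rather than the paper's direct statement, so there is nothing to add.
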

\begin{proof}
In $T$ the unique shortest path between $\phi(x)$ and $\phi(y)$ must
pass through $\phi(u)$ because $\phi(x)$ and $\phi(y)$ belong to
different connected components of $T \setminus \{ \phi(u) \}$.  Hence
$d_T(\phi(x), \phi(y)) = 2N$, so $x$ and $y$ must be unrelated in
$D$.
\end{proof}

\begin{lem}\label{distributed}
Each of the sets $\phi(D^+(u))$ and $\phi(D^-(u))$ intersects more
  than one connected   component of $T\setminus \{ \phi(u) \}$.
\end{lem}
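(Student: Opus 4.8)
The plan is to prove the statement for $\phi(D^+(u))$ only; the assertion for $\phi(D^-(u))$ then follows by applying the very same argument to the reverse digraph $D^{\mathrm{op}}$, which is again a connected locally-finite $C$-homogeneous digraph with more than one end that embeds a triangle, and which has the \emph{same} underlying undirected graph, automorphism group, $D$-cut, structure tree $T$ and structure map $\phi$ as $D$. Since $D^{\mathrm{op},+}(u) = D^-(u)$, proving the $D^+$ case in full generality gives the $D^-$ case for free.

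First I would extract the only combinatorial input needed from the hypothesis that $D$ embeds a triangle. A triangle is a $3$-tournament, and every $3$-tournament---whether the directed $3$-cycle or the transitive triangle---contains a vertex that is incident, within the triangle, to exactly one in-arc and one out-arc, the two remaining vertices of the triangle being adjacent to one another. Since $D$ is vertex-transitive (a consequence of $C$-homogeneity) and embeds a triangle, it follows that the fixed vertex $u$ is the central vertex of some triangle: \textbf{there exist $a \in D^+(u)$ and $b \in D^-(u)$ with $a$ and $b$ adjacent in $D$.} This is the step I expect to require the most care, precisely because both orientation types of triangle must be accommodated, and it is cleanest to phrase it through the $3$-tournament observation plus vertex-transitivity rather than assuming the triangle is directed.

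With this in hand I would argue by contradiction: suppose $\phi(D^+(u))$ lies in a single connected component $P$ of $T \setminus \{\phi(u)\}$ (note each out-neighbour $x$ satisfies $d_T(\phi(x),\phi(u)) = N > 0$, so $\phi(D^+(u))$ indeed avoids $\phi(u)$). Since $a \in D^+(u)$ we have $\phi(a) \in P$; since $a$ and $b$ are adjacent, Lemma~\ref{components} forces $\phi(a)$ and $\phi(b)$ into the same component, so $\phi(b) \in P$ too. On the other hand Lemma~\ref{morethanone} says $\phi(D(u)) = \phi(D^+(u)) \cup \phi(D^-(u))$ meets more than one component, so, as $\phi(D^+(u)) \subseteq P$, there must be some $b' \in D^-(u)$ with $\phi(b')$ in a component $Q \neq P$.

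The contradiction then comes from homogeneity. By $C$-homogeneity the single arcs $b \to u$ and $b' \to u$ are isomorphic connected subdigraphs, so some $g \in G_u$ satisfies $g(b) = b'$; thus $G_u$ is transitive on $D^-(u)$. As $g$ fixes $u$ it fixes $\phi(u)$ and permutes the components of $T \setminus \{\phi(u)\}$, and from $\phi(b') = g\,\phi(b)$ with $\phi(b) \in P$, $\phi(b') \in Q$ we get $g(P) = Q$. But $g$ fixes $u$, hence preserves $D^+(u)$ setwise, and by covariance $\phi(gv) = g\,\phi(v)$ it fixes $\phi(D^+(u))$ setwise; applying $g$ to $\phi(D^+(u)) \subseteq P$ yields $\phi(D^+(u)) \subseteq g(P) = Q$, so $\phi(D^+(u)) \subseteq P \cap Q = \varnothing$. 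Since $D^+(u) \neq \varnothing$ this is absurd, and the supposition fails. The dual statement for $D^-(u)$ follows as indicated above (equivalently, repeat the final paragraph with the roles of $D^+$ and $D^-$ interchanged, using transitivity of $G_u$ on $D^+(u)$).
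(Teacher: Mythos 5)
Your proof is correct, but the mechanism of the contradiction is genuinely different from the paper's. Both arguments suppose $\phi(D^+(u))$ is contained in a single component $P$ of $T\setminus\{\phi(u)\}$, invoke Lemma~\ref{morethanone} to produce an in-neighbour whose image lies outside $P$, and rely on the triangle hypothesis together with Lemma~\ref{components}; but the paper then applies arc-transitivity to map the middle arc of a $2$-arc contained in a triangle directly onto the offending arc $v \rightarrow u$, concluding that $v$ itself is adjacent to a vertex of $D^+(u)$ and hence, by the contrapositive of Lemma~\ref{components}, that $\phi(v) \in P$ --- an immediate contradiction. You instead anchor the triangle at $u$ once and for all (via vertex-transitivity and your $3$-tournament observation, which the paper leaves implicit in the phrase ``there is a $2$-arc $(a,b,c)$ that is a part of some triangle''), obtaining a fixed adjacent pair $a \in D^+(u)$, $b \in D^-(u)$ with $\phi(b) \in P$, and then derive the contradiction group-theoretically: an element $g \in G_u$ with $g(b) = b'$ permutes the components of $T\setminus\{\phi(u)\}$, sends $P$ to $Q \neq P$, yet stabilises $\phi(D^+(u))$ setwise by covariance of $\phi$, forcing $\phi(D^+(u)) \subseteq P \cap Q = \varnothing$. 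Your route costs some extra bookkeeping with the induced action on components, but in exchange it never needs the bad vertex $b'$ to be adjacent to any out-neighbour, and it makes explicit two points the paper compresses: the orientation analysis of the embedded triangle (both the directed $3$-cycle and the transitive triangle supply the needed $2$-arc) and the reduction of the $D^-$ statement to the $D^+$ one via the reverse digraph, which the paper dispatches with a terse ``similarly''. Every ingredient you use --- vertex-transitivity and transitivity of $G_u$ on $D^-(u)$ as consequences of $C$-homogeneity, the covariance $\phi(gv) = g\phi(v)$, and the positivity of $N$ guaranteeing $\phi(D(u))$ avoids $\phi(u)$ --- is available at this point in the paper, so the argument is sound as written.
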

\begin{proof}
Suppose that $\phi(D^+(u))$ is contained in a single component
$\mathcal{C}$ of $T
\setminus \{ \phi(u) \}$.  By
Lemma~\ref{morethanone} there is a vertex $v \in D^-(u)$ with $\phi(v)
\not\in \mathcal{C}$. We know that $D$ embeds triangles so there is a
2-arc $(a,b,c)$ that is a part of some triangle.  By arc-transitivity
we can map the arc $a\rightarrow b$ to the arc $v\rightarrow u$.  Then
$c$ is mapped to a vertex in $D^+(u)$ and it follows that $v$ is
adjacent to a vertex in $D^+(u)$. But then, by Lemma~\ref{components},
$\phi(v) \in \mathcal{C}$ which is a contradiction. Similarly the
assumption that $\phi(D^-(u))$ is contained in a single component of
$T \setminus \{ \phi(u) \}$ leads to a contradiction.
\end{proof}

\begin{lem}\label{localstructure}
Suppose $x$ and $y$ are distinct vertices in $D(u)$.  Then
$x$ and $y$ are adjacent in $\Gamma(D)$ if  and only if
$\phi(x)$ and $\phi(y)$ belong to the same connected component of $T
\setminus \{ \phi(u) \}$.
\end{lem}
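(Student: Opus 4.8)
The forward implication is immediate, being precisely the contrapositive of Lemma~\ref{components}: if $x$ and $y$ are adjacent in $\Gamma(D)$ then, being joined by an arc of $D$, they cannot have $\phi$-images in distinct components of $T \setminus \{\phi(u)\}$, so $\phi(x)$ and $\phi(y)$ lie in the same component. The content of the lemma is therefore the reverse implication: if $\phi(x)$ and $\phi(y)$ lie in a common component $\mathcal{C}$ of $T \setminus \{\phi(u)\}$ then $x$ and $y$ are adjacent; equivalently, any two non-adjacent neighbours of $u$ have their images in distinct components. The plan is to prove the sharper statement that, for each component $\mathcal{C}$, the set $S = D(u) \cap \phi^{-1}(\mathcal{C})$ induces a clique in $\Gamma(D)$.

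First I would assemble the ingredients furnished by $C$-homogeneity and the preceding results. Since $D$ is $C$-homogeneous, $G_u$ acts transitively on $D^+(u)$ and on $D^-(u)$ (any isomorphism $\lb u,p\rb \to \lb u,p'\rb$ of single arcs at $u$ extends to an element of $G_u$), and the partition of $D(u)$ by the component of $T \setminus \{\phi(u)\}$ containing the image is $G_u$-invariant; hence the stabiliser $(G_u)_{\mathcal{C}}$ acts transitively on $S^+ = S \cap D^+(u)$ and on $S^- = S \cap D^-(u)$. Because $D$ embeds a triangle and is arc-transitive, every arc lies in a directed triangle; applying this to the arc between $u$ and a member of $S$ and invoking Lemma~\ref{components} produces an adjacent out/in pair inside $S$, and in particular shows $S^+$ and $S^-$ are both non-empty. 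As an orienting remark, for neighbours of $u$ the images lie at distance $N$ from $\phi(u)$ and, belonging to a common component, their geodesics to $\phi(u)$ share the unique neighbour $t$ of $\phi(u)$ in $\mathcal{C}$, so $d_T(\phi(x),\phi(y)) \le 2N-2$; the clique conclusion is precisely what refines this to the value $N$ attained by genuinely adjacent pairs.

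The decisive step, and the one I expect to be the main obstacle, is to upgrade \emph{$S$ contains an edge} to \emph{$S$ is complete}. For this I would use that, by the lemma immediately preceding, the induced subdigraphs $\lb D^+(u)\rb$ and $\lb D^-(u)\rb$ are finite homogeneous digraphs, hence occur on Lachlan's list \cite{Lachlan3}. The component-congruence restricts each to the homogeneous substructure induced on a single block, while the triangle-pairing forces a rigid matching between the out- and in-neighbours sharing a component. Running through Lachlan's possibilities under these constraints, in the spirit of the neighbourhood analysis of Lemmas~\ref{embedsasquare} and~\ref{neighbourhoods} in the triangle-free case, one checks that a block cannot contain a non-adjacent pair: such a pair could, after transporting an adjacent pair onto it by $(G_u)_{\mathcal{C}}$-transitivity, be made to contradict the homogeneity of $\lb D^+(u)\rb$ or $\lb D^-(u)\rb$. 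This yields that each $S$ is a clique, so any $x,y \in D(u)$ with $\phi(x),\phi(y) \in \mathcal{C}$ are adjacent, completing the reverse implication. The genuine difficulty is this local combinatorial case-check ensuring completeness of the blocks, rather than the tree-theoretic bookkeeping, which is routine given Lemmas~\ref{morethanone}--\ref{distributed}.
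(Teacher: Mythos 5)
Your forward direction is correct and is exactly how the paper handles it (the contrapositive of Lemma~\ref{components}). The reverse direction, however, has a genuine gap at precisely the step you flag as decisive. The mechanism you describe --- ``transporting an adjacent pair onto [a non-adjacent pair] by $(G_u)_{\mathcal{C}}$-transitivity'' so as to ``contradict the homogeneity of $\lb D^+(u)\rb$ or $\lb D^-(u)\rb$'' --- cannot work as stated: the transitivity you established is vertex-transitivity on $S^+$, and no isomorphism carries an edge to a non-edge, so no such transport produces a configuration violating homogeneity of the neighbourhood digraphs. Worse, homogeneity of $\lb D^{\pm}(u)\rb$ as abstract finite digraphs is simply too weak a tool: an independent set of any size is homogeneous, so a block $S^+$ containing a non-adjacent pair is perfectly consistent with everything Lachlan's list can tell you, and the component partition is invisible in the abstract isomorphism type. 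What is actually needed --- and what the paper's short proof does --- is to transport a non-adjacent \emph{same-component} pair onto a non-adjacent \emph{cross-component} pair: given non-adjacent $x,y \in D(u)$ with $\phi(x),\phi(y)$ in a common component $\mathcal{C}$, Lemma~\ref{distributed} supplies $b$ on the same side ($D^+$ or $D^-$) as $y$ with $\phi(b)$ outside $\mathcal{C}$; Lemma~\ref{components} gives $x \not\sim b$, so $\lb x,u,y \rb \cong \lb x,u,b \rb$ as connected induced subdigraphs; $C$-homogeneity of $D$ extends this isomorphism, and equivariance of $\phi$ then yields the contradiction $d_T(\phi(x),\phi(y)) < 2N = d_T(\phi(x),\phi(b))$. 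The indispensable ingredient is the equivariance of the structure map under elements of $G_u$, which your sketch never invokes at the crux; your own ``orienting remark'' that $d_T(\phi(x),\phi(y)) \leq 2N-2$ is in fact half of the paper's contradiction, left unused.

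Two further problems with the scaffolding. First, mixed pairs $x \in D^-(u)$, $y \in D^+(u)$ in a common component are not touched by homogeneity of either neighbourhood digraph separately; your ``rigid matching'' between in- and out-neighbours sharing a component is essentially Lemma~\ref{triangles}, which the paper proves \emph{from} the present lemma, so leaning on it here risks circularity. Second, ``every arc lies in a directed triangle'' is not available at this stage of the development: transitive triangles are excluded only via Lemma~\ref{no3chain} (through Lemma~\ref{nhoodisatrounament}), both of which come after and depend on the present lemma. Arc-transitivity gives only that every arc lies in \emph{some} triangle, so your claims that $S$ contains an adjacent out/in pair and that $S^+$ and $S^-$ are both non-empty do not follow as asserted --- the triangle through the arc $x \rightarrow u$ could a priori have its third vertex on the same side as $x$. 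None of these auxiliary claims are needed once the correct transport argument is in place.
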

\begin{proof}
Let $x,y \in D(u)$ with $x$ and $y$ not adjacent in $D$. We claim that
$\phi(x)$ and $\phi(y)$ belong to different components of $T \setminus
\{ \phi(u) \}$.
Indeed, if they belonged to the same component $\mathcal C$
(say) then by Lemma~\ref{distributed}
there exists $b \in D(u)$  with $\phi(b)$ in a connected component
of $T \setminus \{ \phi(u) \}$ that is different from $\mathcal C$.
We can choose $b$ to be in $D^+(u)$ if $y$ is in $D^+(u)$ and in
$D^-(u)$ if $y$ is in $D^-(u)$.  Note that
$b$ is unrelated to $x$ (by Lemma~\ref{components})
and hence $\lb x,u,y \rb \cong \lb x,u,b
\rb$.  But then there is no automorphism extending the isomorphism between 
$\lb x,u,y \rb$ and $\lb x, u, b \rb$ that sends $y$ to $b$,
because $d_T(\phi(x),\phi(y))<2N$ while $d_T(\phi(x),\phi(b))=2N$.
This contradicts $C$-homogeneity and therefore the assumption that
$x$ and $y$ are not adjacent in $D$ cannot hold.

The other direction follows from Lemma~\ref{components}.
\end{proof}

\begin{lem}\label{nhoodisatrounament}
If the subdigraph induced by $D^+(u)$ (respectively $D^-(u)$)
is not a null graph then
it is isomorphic to the disjoint union of a
finite number of copies of the directed triangle $D_3$.
\end{lem}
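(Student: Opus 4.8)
The plan is to read off the structure of $\lb D^+(u) \rb$ directly from Lemma~\ref{localstructure}. That lemma tells us that for distinct $x,y \in D(u)$ the vertices $x$ and $y$ are adjacent in $\Gamma(D)$ precisely when $\phi(x)$ and $\phi(y)$ lie in the same connected component of $T \setminus \{ \phi(u) \}$. Restricting attention to $D^+(u) \subseteq D(u)$, the relation ``$\phi(x)$ and $\phi(y)$ lie in the same component of $T \setminus \{ \phi(u) \}$'' is the pull-back along $\phi$ of the partition of $T \setminus \{ \phi(u) \}$ into components, and hence is an equivalence relation on $D^+(u)$. Thus adjacency in $\lb D^+(u) \rb$ coincides with ``distinct and equivalent'', so each equivalence class is a clique and there are no edges between distinct classes. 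In other words $\lb D^+(u) \rb$ is a disjoint union of cliques. Since $D$ is a digraph, between any two adjacent vertices there is exactly one arc, so each of these cliques is in fact a tournament.

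First I would record that all of these tournaments are isomorphic and are themselves homogeneous. We already know that $\lb D^+(u) \rb$ is a finite homogeneous digraph, so in particular it is vertex-transitive; an automorphism carrying a vertex of one clique to a vertex of another carries the first clique onto the second, so all the cliques have a common size $m$ and are pairwise isomorphic. To see that a single clique $C$ is homogeneous, take any isomorphism $\psi : A \to B$ between induced subtournaments of $C$. As $A$ and $B$ are complete they are connected, so by homogeneity of $\lb D^+(u) \rb$ the map $\psi$ extends to an automorphism $\alpha$ of $\lb D^+(u) \rb$. Assuming $A \neq \varnothing$, the clique $\alpha(C)$ contains $\alpha(A) = B \subseteq C$, and since distinct cliques are disjoint we must have $\alpha(C) = C$; thus $\alpha \rrestriction_C$ is an automorphism of $C$ extending $\psi$, and $C$ is a finite homogeneous tournament.

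Finally I would invoke the classification of finite homogeneous digraphs (Lachlan \cite{Lachlan3}): the only finite homogeneous tournaments are the one-vertex tournament and the directed triangle $D_3$. Because $\lb D^+(u) \rb$ is not a null graph, at least one clique contains an arc, hence has at least two vertices, and therefore cannot be the one-vertex tournament; so $m = 3$ and every clique is a copy of $D_3$. Local-finiteness of $D$ bounds the number of cliques, so $\lb D^+(u) \rb$ is a disjoint union of finitely many copies of $D_3$. The argument for $D^-(u)$ is word-for-word the same, working with in-neighbourhoods instead of out-neighbourhoods. The only slightly delicate point, and the one I would treat most carefully, is the inheritance of homogeneity by a single clique together with the appeal to the tournament classification; the reduction to a disjoint union of tournaments is immediate from Lemma~\ref{localstructure}.
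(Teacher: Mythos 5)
Your proof is correct and follows essentially the same route as the paper's: Lemma~\ref{localstructure} makes adjacency on $D^+(u)$ an equivalence relation, so $\lb D^+(u) \rb$ decomposes as a disjoint union of isomorphic finite homogeneous tournaments, and the classification of finite homogeneous tournaments (which the paper cites from \cite{Lachlan2} rather than \cite{Lachlan3}, though either source suffices) forces each component to be a single vertex or $D_3$. The only difference is that you spell out the inheritance of homogeneity by a single clique in detail, a step the paper compresses into a brief appeal to arc-transitivity.
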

\begin{proof}
By Lemma~\ref{localstructure}, for vertices in $D^+(u)$ the property
of being related by an arc is an equivalence relation. Thus, along
with arc-transitivity, this shows that $D^+(u)$ is a disjoint union of
isomorphic finite homogeneous tournaments. This proves the lemma since
the only finite homogeneous tournaments are the trivial one-element
graph, and the directed triangle $D_3$ (see \cite{Lachlan2}).
\end{proof}

\begin{lem}\label{no3chain}
Both $D^+(u)$ and $D^-(u)$ are independent sets.
\end{lem}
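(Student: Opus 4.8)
The plan is to argue by contradiction, and by symmetry it suffices to treat $D^+(u)$. Suppose $D^+(u)$ is \emph{not} independent. By Lemma~\ref{nhoodisatrounament} the induced subdigraph on $D^+(u)$ is then a disjoint union of directed triangles, so I may fix $x,y,z \in D^+(u)$ with $x \to y \to z \to x$; together with $u \to x,\ u\to y,\ u\to z$ this is the configuration ``$u$ dominates a directed triangle''. I would first record two facts. (a) By Lemma~\ref{localstructure} the images $\phi(x),\phi(y),\phi(z)$ lie in a common component $\mathcal{C}$ of $T \setminus \{\phi(u)\}$. (b) Since the subdigraph induced by $\{u,x,y,z\}$ is connected, the cyclic map fixing $u$ and sending $x \mapsto y \mapsto z \mapsto x$ is an automorphism of it, so by $C$-homogeneity it extends to some $\rho \in \Aut(D)$; this $\rho$ fixes $u$, preserves each of $D^+(u)$ and $D^-(u)$, and stabilises the component $\mathcal{C}$.

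Next I would attach an in-neighbour of $u$ to the whole triangle. Applying Lemma~\ref{nhoodisatrounament} to $D^-(x)$, which contains the adjacent pair $u,z$ (as $u\to z$), produces a vertex $w$ with $\{u,z,w\}$ a directed triangle $u \to z \to w \to u$ and $w \to x$; in particular $w \in D^-(u)$ and $z \to w$. Because $z \to w$, Lemma~\ref{localstructure} gives $\phi(w) \in \mathcal{C}$, whence (again by Lemma~\ref{localstructure}) $w$ is adjacent in $D$ to each of $x,y,z$. A short argument using that $D^+(w)$ is a union of directed triangles, together with the fact that $u\to y$ rather than $y \to u$, then forces the direction $y \to w$. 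Applying $\rho$ and $\rho^2$ to $w$ produces the in-triangle $\{w,\rho w,\rho^2 w\}$ of $u$ inside $\mathcal{C}$ and transports the known arcs ($w\to x$, $z\to w$, $y\to w$, and their $\rho$-images) to fix all nine arcs between the out-triple $\{x,y,z\}$ and the in-triple. The outcome is that $\{u,x,y,z,w,\rho w,\rho^2 w\}$ induces a tournament, all of whose $\phi$-images are pairwise at distance $N$ in $T$.

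The contradiction is then meant to come from Lemma~\ref{nhoodisatrounament} itself: inside this tournament I would locate a vertex whose three out-neighbours form a \emph{transitive} triangle, so that some $D^+(v)$ contains three pairwise-adjacent vertices that are not a directed $D_3$ — impossible, since $D^+(v)$ is a disjoint union of directed triangles; the natural candidate is $\langle\{y,\rho w,\rho^2 w\}\rangle \subseteq D^+(x)$, and the statement for $D^-(u)$ is dual. I expect the genuine obstacle to be precisely the orientation bookkeeping in the middle step: the arcs inside $\{w,\rho w,\rho^2 w\}$ admit two $\rho$-invariant cyclic orientations, and only for one of them is $\langle\{y,\rho w,\rho^2 w\}\rangle$ visibly transitive — for the other, the induced seven-vertex tournament is the rotational (Paley) tournament in which every neighbourhood triple is a $3$-cycle and no single neighbourhood exhibits a transitive triangle. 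Ruling out this second orientation is the crux; I would attempt it either combinatorially, by feeding the directed-triangle completion $p$ of the arc $u\to x$ (obtained from arc-transitivity and the embedded triangle) back into $D^+(w)$ to over-determine the orientation, or, if no clean combinatorial exclusion is available, by invoking the structure tree together with the more-than-one-end hypothesis (through Lemma~\ref{distributed} and Lemma~\ref{components}) to show that such a dense, fully symmetric local tournament cannot be realised in $D$.
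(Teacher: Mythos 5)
Your construction up to the seven-vertex tournament is correct, and in fact it reproduces the paper's configuration by a different route: the paper takes a component $A$ of $D^-(u)$ and a component $B$ of $D^+(u)$, uses the structure tree (Lemma~\ref{localstructure} plus a count of how many vertices of $D^{\pm}(a_1)$ can map into one component of $T\setminus\{\phi(a_1)\}$) to show $A\cup B\cup\{u\}$ is a tournament $F$ in which each vertex of $A$ has a unique out-neighbour in $B$ --- exactly your nine cross-arcs, which you instead obtain by $\rho$-equivariant transport. The genuine gap is precisely where you flag it, and it is fatal to your endgame: in the second orientation the seven vertices induce the doubly regular (Paley) tournament, in which \emph{every} in- and out-neighbourhood triple is a directed $3$-cycle, so no application of Lemma~\ref{nhoodisatrounament} at any single vertex can produce a transitive triangle. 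Neither proposed repair closes this: feeding the triangle completion of $u\to x$ back into $D^+(w)$ only regenerates data already present in the tournament, and the ends machinery cannot help because Lemmas~\ref{components}, \ref{distributed} and \ref{localstructure} constrain adjacency only, never arc directions, so they are blind to the distinction between the two orientations.

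The missing move --- and the paper's actual endgame --- is a rigidity argument from $C$-homogeneity that works uniformly in both orientations, so the dichotomy you agonise over is a red herring. In your notation: $x\to\rho w$ and $x\to\rho^2 w$, and both $\langle u,x,\rho w\rangle$ and $\langle u,x,\rho^2 w\rangle$ are copies of $D_3$ ($\rho w\to u\to x\to \rho w$, likewise for $\rho^2 w$); these are connected, so $C$-homogeneity gives $\alpha\in\Aut(D)$ fixing $u$ and $x$ with $\alpha(\rho w)=\rho^2 w$. But $\alpha$ fixes $u$, hence preserves the component $\{x,y,z\}$ of $D^+(u)$, and fixing $x$ forces it to fix $y$ and $z$ (the unique out- and in-neighbours of $x$ in that directed triangle); your cross-arc table then pins down each in-triple vertex, since $w$, $\rho w$, $\rho^2 w$ are respectively the \emph{unique} vertices of the in-triangle with an arc to $x$, $y$, $z$. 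Hence $\alpha$ fixes $\rho w$, a contradiction --- and note the internal orientation of $\{w,\rho w,\rho^2 w\}$ was never consulted. This is word for word the paper's conclusion with $b'$, $a_2$, $a_3$: the automorphism extending $\langle a_2,u,b'\rangle\cong\langle a_3,u,b'\rangle$ must fix $B$ pointwise and therefore $A$ pointwise, yet it moves $a_2$ to $a_3$. So your proof is one (available) lemma application short of complete: everything you built suffices, but the contradiction must be extracted from an automorphism of $D$ that the seven-vertex tournament is too rigid to admit, not from the local neighbourhood structure, which the Paley orientation satisfies perfectly.
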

\begin{proof}
If $D^-(u)$ contains an arc $x \rightarrow y$ then $D^+(x)$ contains the arc $y \rightarrow u$,
and  by vertex-transitivity $D^+(u)$ contains an arc. Similarly, if $D^+(u)$ has
an arc then so does $D^-(u)$. Hence $D^+(u)$ is a null graph if and only if $D^-(u)$ is a null graph. 
Therefore, to prove the lemma it suffices to show that at least one of
$D^+(u)$ or $D^-(u)$ is an independent set.
Suppose, seeking a contradiction,
that neither $D^+(u)$ nor $D^-(u)$ is an independent set.  By
Lemma~\ref{nhoodisatrounament} each connected component of $D^+(u)$ and
$D^-(u)$ is isomorphic to a directed triangle.  Let $A$ be
a connected component of $D^-(u)$.  The argument in
Lemma~\ref{distributed} shows that some vertex in $A$ is adjacent to a
vertex $b$ in $D^+(u)$. Let $B$ denote the connected component of
$D^+(u)$ that $b$ belongs to.
  By Lemma~\ref{localstructure} both $A$ and $B$ are
mapped by $\phi$ to the same component of $T\setminus\{\phi(u)\}$.
Using Lemma~\ref{localstructure}
again we conclude that the subdigraph induced by
$A\cup B\cup\{u\}$ is a tournament $F$ with 7 vertices.
Note also that by Lemma~\ref{localstructure}
no vertex in $A$ is adjacent to a vertex in $D^+(u)$
outside $B$ and no vertex in $B$ is adjacent to a vertex in $D^-(u)$
outside $A$.

Suppose $A=\{a_1, a_2, a_3\}$ with $a_1\rightarrow a_2\rightarrow
a_3\rightarrow a_1$.  Note that both $D^-(a_1)\cap F$ and
$D^+(a_1)\cap F$ will be mapped
by $\phi$ to the same connected component of
$T\setminus\{\phi(a_1)\}$.  But only three vertices of each of
$D^-(a_1)$ and $D^+(a_1)$ can be mapped to this component and since $F$ is a tournament with $7$ vertices we
conclude that both $D^-(a_1)\cap F$ and $D^+(a_1)\cap F$ contain three
vertices.  The vertices $a_2$ and $u$ are in $D^+(a_1)\cap F$ and thus
there must be  a unique vertex $b'$ in $B$ such that $a_1\rightarrow b'$.
The same holds true for $a_2$ and $a_3$.
Since $F$ is a tournament and $b'$ is the unique vertex in $B$ with $a_1\rightarrow b'$ it follows that 
$b'\rightarrow a_2$ and $b'\rightarrow a_3$.
Then $\langle a_2, u, b' \rangle \cong
\langle a_3, u, b' \rangle\cong D_3$ and by $C$-homogeneity there is an
automorphism fixing $u$ and $b'$ and sending $a_2$ to $a_3$. This is
clearly a contradiction, since any automorphism fixing $u$ and $b'$,
must fix $B$ pointwise, and hence
also fixes $A$ pointwise.

Now we have reached a contradiction and have established the lemma.
\end{proof}

\begin{lem}\label{triangles}
The subdigraph induced by $D(u)\cup\{u\}$
is a union of $|D^+(u)|$ directed triangles
$D_3$ such that any two of them have just the vertex $u$ in common.
\end{lem}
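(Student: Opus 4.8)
The plan is to show that the triangles through $u$ are in bijection with the out-neighbours of $u$, each such triangle being a directed $D_3$ and meeting the others only in $u$. First I would record that, by Lemma~\ref{no3chain}, both $D^+(u)$ and $D^-(u)$ are independent sets, and deduce that \emph{every} triangle in $D$ is directed: a transitive triangle $a\rightarrow b\rightarrow c$ with $a\rightarrow c$ would force the arc $b\rightarrow c$ to lie inside $D^+(a)$, contradicting the independence of $D^+(a)$. Hence any triangle through $u$ must use exactly one out-neighbour $x\in D^+(u)$ and one in-neighbour $w\in D^-(u)$ (two vertices from the same side would have to be adjacent, which is impossible), and being directed it has the form $u\rightarrow x\rightarrow w\rightarrow u$; in particular $x\rightarrow w$.

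Next I would control the number of such triangles using the structure tree. By Lemma~\ref{localstructure}, two distinct vertices of $D(u)$ are adjacent precisely when their $\phi$-images lie in the same component of $T\setminus\{\phi(u)\}$. Since $D^+(u)$ is independent, distinct out-neighbours lie in distinct components, and likewise distinct in-neighbours lie in distinct components; thus each component of $T\setminus\{\phi(u)\}$ contains the image of at most one out-neighbour and of at most one in-neighbour. A directed triangle $u\rightarrow x\rightarrow w\rightarrow u$ forces $x$ and $w$ (both in $D(u)$ and mutually adjacent) into a common component, so for a fixed $x\in D^+(u)$ there is \emph{at most one} in-neighbour $w$ with $x\rightarrow w$. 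On the other hand, since $D$ embeds a (necessarily directed) triangle and $G=\Aut(D)$ acts arc-transitively, every arc of $D$ lies in a directed triangle; applying this to the arc $u\rightarrow x$ yields \emph{at least one} such $w$. Hence there is exactly one, giving a well-defined map $x\mapsto w_x$ from $D^+(u)$ to $D^-(u)$.

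Finally I would check this map is a bijection and read off the statement. Injectivity follows because $w_x=w_{x'}$ would place the distinct out-neighbours $x,x'$ in the same component, which is impossible; surjectivity follows by applying arc-transitivity to the arcs $w\rightarrow u$, which exhibits every in-neighbour as some $w_x$. Thus $|D^+(u)|=|D^-(u)|$ and the triangles through $u$ are exactly the $|D^+(u)|$ directed triangles $\langle u,x,w_x\rangle\cong D_3$. Two of these share only $u$ (the $x$'s are distinct, the $w_x$'s are distinct, and $D^+(u)\cap D^-(u)=\varnothing$ since $D$ has no $2$-cycles), and there are no further arcs in $\langle D(u)\cup\{u\}\rangle$: none lie inside $D^+(u)$ or $D^-(u)$, and any arc between the two sides produces a directed triangle with $u$ and so is one of the arcs $x\rightarrow w_x$. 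I expect the only real work to be the uniqueness half --- the ``at most one'' --- which rests on the structure-tree component description of adjacency in $D(u)$ from Lemma~\ref{localstructure} together with the independence of the neighbourhoods; everything else is bookkeeping.
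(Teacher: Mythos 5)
Your proof is correct and follows essentially the same route as the paper's: existence of the triangle through each arc at $u$ via arc-transitivity and the embedded (necessarily directed) triangle, the forced orientation $x\rightarrow w$ from the independence of out-neighbourhoods (Lemma~\ref{no3chain}), and uniqueness via the component criterion of Lemma~\ref{localstructure} combined with independence. Your explicit bijection $x\mapsto w_x$ and the preliminary observation that every triangle is directed are just a cleaner repackaging of the paper's inline argument that each vertex of $D(u)$ lies in a unique directed triangle containing $u$.
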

\begin{proof}
Let $a$ be a vertex in $D^-(u)$.  By Lemma~\ref{no3chain} and the
assumption that $D$ embeds a triangle there is a
vertex $b$ in $D^+(u)$ such that $a$ and $b$ are related.  If
$a\rightarrow b$ is an arc in $D$ then the arc $u\rightarrow b$ is
contained in $D^+(a)$ contradicting the previous lemma.  Thus
$b\rightarrow a$ and $\langle a, u, b\rangle\cong D_3$.  We also see
that if $a$ were related to some other vertex $b'$ in $D^+(u)$ then,
by Lemma~\ref{localstructure},
$\phi$ would map $a, b$ and $b'$ all to the same component of
$T\setminus \{\phi(u)\}$ and thus, by Lemma~\ref{localstructure},
$b$ and $b'$  would be adjacent vertices in $D^+(u)$
contradicting Lemma~\ref{no3chain}.  Applying the same
argument to vertices in $D^+(u)$ we conclude that for each vertex
$b\in D^+(u)$ there is a unique vertex $a$ in $D^-(u)$ such that
$b\rightarrow a$.  Now we have proved that each vertex in $D(u)$ is in
a unique directed triangle containing $u$ and the lemma follows.
\end{proof}

\begin{proof}[Proof of Theorem~\ref{triangleclassification}]

Let $r=|D^+(u)|$.  We show that $D\cong T(r)$.  Fix a vertex $u'$ in
$T(r)$ and let $Y_j$ denote the subdigraph of $T(r)$ induced by
vertices in distance at most $j$ from $u'$.  Let $D_j$ denote the
subdigraph of $D$ induced by vertices in distance at most $j$ from
$u$.

Lemma~\ref{triangles} shows that $D_1$ is isomorphic to $Y_1$.
Let $\psi_1$ be an isomorphism between $D_1$ and $Y_1$.  Note that
$\psi_1(u)=u'$.
We use induction to construct a sequence of isomorphisms
$\psi_j:D_j\rightarrow Y_j$ such that if $i<j$ then $\psi_j$ agrees
with $\psi_i$ on $D_i$.  Then we define an isomorphism
$\psi:D\rightarrow T(r)$ such that if $v\in D_j$ then
$\psi(v)=\psi_j(v)$.

We already know that there is an integer $N$
such that if $x$ and $y$ are adjacent vertices in $D$ then
$d_T(\phi(x), \phi(y))=N$.  A preliminary step is to show that
if $x$ and $y$
are vertices in $D$ then $d_T(\phi(x), \phi(y))=d(x,y)N$, where
$d(x,y)$ denotes the distance between $x$ and $y$ in the underlying
undirected graph $\Gamma$.
Assume that $d_T(\phi(x), \phi(y))=Nd(x,y)$ whenever $d(x,y)\leq k$.
Suppose $d(x,y)=k+1$.  Find a path $x, x_1, \ldots, x_{k},
y$ of length $k+1$ in $D$.  By the induction hypothesis
$d_T(\phi(x), \phi(x_{k}))=kN$.  The vertices $x_{k-1}$ and $y$ are
not adjacent and are both  in $D(x_k)$.  Hence $\phi(x_{k-1})$ and
$\phi(y)$ belong to different components of
$T\setminus\{\phi(x_k)\}$ and the path in $T$ from $\phi(x_{k-1})$ to
$\phi(y)$ goes through $\phi(x_k)$ and has length $2N$.  Thus
$d_T(\phi(x), \phi(y))=N(k+1)=Nd(x,y)$.   In particular the map
$\phi$ is injective.

Suppose $y$ is a vertex in $D$ and
$d(u,y)=k+1$.  Let $x$ be a vertex in $D_k$ that is adjacent to $y$.
If $x'$ is another vertex in $D_k$ adjacent to $y$ then
$d_T(\phi(u), \phi(x))= d_T(\phi(u), \phi(x'))=kN$ and
$d_T(\phi(u), \phi(y))=(k+1)N$ and
$d_T(\phi(x), \phi(y))= d_T(\phi(x'), \phi(y))=N$.  But then both
$\phi(x)$ and $\phi(x')$ would be on the path from $\phi(u)$ to
$\phi(y)$ and we
also know that $\phi(x)\neq \phi(x')$.  This is clearly
impossible in a tree. Hence $y$ is adjacent to a unique vertex $x$ in
$D_k$.  We also see that if $y$ and $z$ are adjacent vertices in $D$
such that $d(u,y)=d(u,z)=k+1$ then there is a vertex $x$ in $D_k$ such
that both $y$ and $z$ are adjacent to $x$.

Suppose now that we have defined the isomorphism
$\psi_k:D_k\rightarrow Y_k$.    Let $x$ be a vertex in $D$ such that
$d(u,x)=k$.  The subgraph induced by $D(x)$ consists of $r$
triangles.  One of these triangles will be contained in $D_k$ and the
others will only have the vertex $x$ inside $D_k$.  The
same is true in relation to the vertex $\psi_k(x)$ and $Y_k$.
Now we can clearly extend $\psi_k$ to those vertices of $D(x)$
that are in distance $k+1$ from $u$.   Doing this for all the vertices
in $D_k$ that are in distance $k$ from $u$ will give us
the desired extension $\psi_{k+1}$ of $\psi_k$. 
\end{proof} 

\bibliographystyle{abbrv}

\end{document}